\theoremstyle{remark}
\newtheorem{remark}{Remark} 
\newtheorem{lemma}{Lemma}
\title{Spatially dispersionless, unconditionally stable
  FC-AD solvers for variable-coefficient PDEs}
\author{O.~P. Bruno\footnote{Applied and Computational Mathematics,
    California Institute of Technology, Pasadena, CA 91125, USA --
    {\tt bruno@acm.caltech.edu}} and A.~Prieto\footnote{Departamento
    de Matem\'aticas, Univ. da Coru\~na, 15071
    A Coru~na, Spain.}}
\newcommand{\Nfourier}{F}  
\newcommand{\Ndeg}{m}
\newcommand{\Nd}{N_{d}}
\newcommand{\Ndelta}{N_{\Delta}} 
\newcommand{\Nodes}{N}
\newcommand{\Niter}{N_{\mathrm{iter}}}
\newcommand{\Nover}{N_{\mathrm{over}}}
\newcommand{\tildea}{a}
\newcommand{\tildeb}{c}
\begin{document}

\date{}
\maketitle
\begin{abstract}
  We present fast, spatially dispersionless and unconditionally stable
  high-order solvers for Partial Differential Equations (PDEs) with
  {\em variable coefficients} in general smooth domains. Our solvers,
  which are based on (i)~A certain ``Fourier continuation'' (FC)
  method for the resolution of the Gibbs phenomenon, together with
  (ii)~A new, preconditioned, FC-based solver for two-point boundary
  value problems (BVP) for variable-coefficient Ordinary Differential
  Equations, and (iii)~An Alternating Direction strategy, generalize
  significantly a class of FC-based solvers introduced recently for
  constant-coefficient PDEs. The present algorithms, which are
  applicable, with high-order accuracy, to variable-coefficient
  elliptic, parabolic and hyperbolic PDEs in general domains with
  smooth boundaries, are unconditionally stable, do not suffer from
  spatial numerical dispersion, and they run at FFT speeds. The accuracy,
  efficiency and overall capabilities of our methods are demonstrated
  by means of applications to challenging problems of diffusion and
  wave propagation in heterogeneous media.
\end{abstract}

\noindent {\it Keywords:} High-order methods, 
Alternating Direction Implicit schemes, 
numerical dispersion,
variable coefficient problems.

\maketitle

\vspace{-6pt}

\section{Introduction\label{intro}}

We present fast, spatially dispersionless and unconditionally stable
high-order solvers for Partial Differential Equations (PDEs) with {\em
  variable coefficients} in general smooth domains. Our algorithms,
which generalize significantly a class of
solvers~\cite{bruno10,lyon10} introduced recently for
constant-coefficient PDEs, are based on (i)~A certain ``Fourier
continuation'' (FC) method~\cite{bruno10} for the resolution of the
Gibbs phenomenon, together with (ii)~A new, preconditioned, FC-based
solver for two-point boundary value problems (BVP) for
variable-coefficient Ordinary Differential Equations (ODE), and
(iii)~The Alternating Direction Implicit (ADI)
methodology~\cite{peaceman55,douglas56}. One of the main enabling
elements in our overall FC-AD algorithm (Fourier-Continuation
Alternating-Directions) is the new solver~(ii) for two-point boundary
value problems with variable coefficients. Relying on preconditioners
that result from inversion of oversampled finite-difference matrices
together with a new methodology for enforcement of boundary conditions
and the iterative linear algebra solver GMRES, this algorithm produces
rapidly the solutions required for FC-AD time-stepping in the
variable-coefficient context. (A non-oversampled finite-difference
preconditioner related to but different from the one used here was
introduced in~\cite{Sun96} in the context of orthogonal collocation
methods for equations with constant coefficients.) The resulting PDE
solvers, which in practice are found to be unconditionally stable, do
not suffer from spatial numerical dispersion and they run at a
computational cost that grows as $\mathcal{O}(\Nodes\log_{2} \Nodes)$
with the size $\Nodes$ of the computational grid. A variety of
examples presented in this paper demonstrate the accuracy, speed and
overall capabilities of the proposed methodology.

The variable-coefficient FC-AD algorithms introduced in this paper
enjoy all the good qualities associated with the constant coefficient
solvers presented in~\cite{bruno10,lyon10}: the performance of the new
solvers compare favourably, in terms of accuracy and speed, with those
associated with previous approaches; a detailed discussion in these
regards can be found in the introductory sections
of~\cite{bruno10,lyon10}. In particular, in this paper we demonstrate
the high-order, essentially dispersionless character of the new FC-AD
solvers by means of solutions to parabolic and hyperbolic problems.
For example, the results presented in
Section~\ref{sec:dispersionless}, which include FC-AD fixed-accuracy
solutions at fixed numbers of points-per-wavelength for problems of
sizes ranging from one to one-hundred wavelengths in size, demonstrate
the spatial dispersionlessness of the FC-AD algorithm for hyperbolic
equations. Further, as shown in Table~\ref{tab:cputime}, for example,
the proposed FC-AD algorithm can evolve a solution characterized by
one-million spatial unknowns in a computing time of approximately
$1.5$ seconds per time step in a single-core run.

The remainder of this paper is organized as follows: after the
introduction in Section~\ref{pdes_consid} of the variable-coefficient
PDEs we consider, Section~\ref{sec:time-disc} details the ADI
approximations we employ. Section~\ref{sec:fc-gram} presents the FC
method, including, in Section~\ref{scaled_fc_gram}, a special version
of the FC algorithm we need to tackle variable-coefficient
differential equations.  Sections~\ref{sec:ODE-theory}
and~\ref{ode_impl} then describe our new FC-based solver for two-point
BVP, which include 1) Iterative FC-based solvers for periodic ordinary
differential equations in a certain ``continued'' periodic context,
and 2)~Techniques that enable enforcement of boundary conditions in
presence of either sharp or diffuse boundary layers.  The combined
FC/ADI scheme is described in Section~\ref{sec:full-disc}. The overall
properties of the resulting FC-AD PDE solver, finally, are
demonstrated in Section~\ref{sec:numerical} through a variety of
numerical results.

\section{Preliminaries}
\label{sec:prelim}
\subsection{Variable coefficients PDEs\label{pdes_consid}}
This paper presents FC-based solvers for linear equations containing
time-independent but spatially variable coefficients. While the
methods we present are applicable to any partial differential equation
for which an ADI splitting is available, for definiteness we focus on
the basic variable-coefficient parabolic and hyperbolic problems
\begin{equation}
\label{eq:heat}
\left\{
\begin{array}{ll}
\alpha\partial_{t}u-\mbox{div}(\beta\mbox{grad}u)=f&
\mbox{ in }\Omega\times(0,T),\\
u=g&\mbox{ on }\partial\Omega\times(0,T),\\
u=u_{0}&\mbox{ in }\Omega\times\{0\},
\end{array}\right.
\end{equation}
and
\begin{equation}
\label{eq:wave}
\left\{
\begin{array}{ll}
\alpha\partial_{t}^{2}u-\mbox{div}(\beta\mbox{grad}u)=f&
\mbox{ in }\Omega\times(0,T),\\
u=g&\mbox{ on }\partial\Omega\times(0,T),\\
u=u_{0}&\mbox{ in }\Omega\times\{0\},\\
\partial_{t}u=u_{1}&\mbox{ in }\Omega\times\{0\}
\end{array}
\right.
\end{equation}
in a bounded open set $\Omega$ with smooth boundary $\partial\Omega$
and within the time interval $(0,T)$.  Here $\alpha$, $\beta$, $u_0$, $u_1$, $f$
and $g$ are suficiently regular functions defined in $\Omega$;
additionally the coefficient functions $\alpha$ and $\beta$ are
assumed to satisfy the coercivity conditions
$$
\begin{array}{ll}
\alpha_{0}\le \alpha\le\alpha_{1}&\qquad \mbox{ in } \Omega,\\
\beta_{0}\le \beta\le\beta_{1}&\qquad \mbox{ in } \Omega
\end{array}
$$
for some positive constants $\alpha_0$, $\alpha_1$, $\beta_0$ and
$\beta_1$, while the initial and source functions are required to
verify the relevant compatibility conditions in
$\partial\Omega\times\{0\}$, namely, $u_0=g$ for the parabolic problem
\eqref{eq:heat} and $u_0=g$, $u_{1}=\partial_{t}g$ for the hyperbolic
problem \eqref{eq:wave}.


\subsection{Alternating Direction schemes}
\label{sec:time-disc}
This section presents the ADI splitting schemes we use for the
solution of the PDE problems~\eqref{eq:heat} and~\eqref{eq:wave}.
These splitting schemes result as adequate generalizations of the
Peaceman-Rachford scheme~\cite{peaceman55} for diffusion problems and
the non-centered scheme~\cite{lyon10} for the wave equation to the
present variable-coefficient context.  In what follows $\Delta t>0$
denotes the time step used in the computational time interval $[0,T]$;
it is assumed that $n_{\mathrm{max}}\Delta t=T$ for a certain positive
integer $n_{\mathrm{max}}$. Letting $t_{n}=n\Delta t$ for integer and
even fractional values of $n$ (e.g., $t_{n+\frac14} =
(n+\frac14)\Delta t$), we have, in particular,
$t_{n_{\mathrm{max}}}=T$.

\subsubsection{Diffusion equation}\label{sec:diffusion}
Given the initial values $u_{0}$, the right hand-sides
$f^{n+\frac14}(x,y)=f(x,y,t_{n+\frac14})$ and
$f^{n+\frac34}(x,y)=f(x,y,t_{n+\frac34})$, and the boundary values
$g^{n+1}(x,y)=g(x,y,t_{n+1})$, the exact solution
$\phi^{n}(x,y)=u(x,y,t_{n})$ for $n=1,\ldots,n_{\mathrm{max}}$ 
of the diffusion problem~(\ref{eq:heat}) satisfies the Crank-Nicolson
relation~\cite{peaceman55,bruno10}
\begin{equation}\label{eq:heat_crank}
\left\{
\begin{array}{ll}
  \alpha\displaystyle\frac{\phi^{n+1}-\phi^{n}}{\Delta t}
  -\mbox{div}\left(\beta\mbox{grad}
  \displaystyle\frac{\phi^{n+1}+\phi^{n}}{2}\right)=
  \displaystyle\frac{f^{n+\frac14}+f^{n+\frac34}}{2}
  +\mathcal{O}(\Delta t^2)\qquad&\mbox{ in }\Omega,\\
  \phi^{0}=u_{0}\qquad&\mbox{ in }\Omega,\\[0.2cm]
  \phi^{n+1}=g^{n+1}\qquad&\mbox{ in }\partial\Omega.
\end{array}
\right.
\end{equation}
Following~\cite{peaceman55}, an ADI scheme can be obtained from the
Crank-Nicolson iteration: denoting by $u^{n+1}$ the corresponding
approximation of $\phi^{n+1}$ (for integer values $n=1, 2,\dots$)
and using the intermediate quantity $u^{n+\frac12}$, the ADI scheme is
embodied in the equations
\begin{equation}\label{eq:heat-adi-1}
u^{n+\frac{1}{2}}
-\frac{\Delta t}{2\alpha}\partial_x(\beta\partial_x u^{n+\frac{1}{2}})
=u^{n}+\frac{\Delta t}{2\alpha}\partial_y(\beta\partial_y u^{n})
+\frac{\Delta t}{2\alpha}f^{n+\frac{1}{4}}\qquad\mbox{ in }\Omega,
\end{equation}
(with boundary condition $u^{n+\frac12}=g^{n+\frac12}$ on
$\partial\Omega$) and
\begin{equation}\label{eq:heat-adi-2}
u^{n+1}-\frac{\Delta t}{2\alpha}\partial_y(\beta\partial_yu^{n+1})
=u^{n+\frac{1}{2}}
+\frac{\Delta t}{2\alpha}\partial_x(\beta\partial_x u^{n+\frac{1}{2}})
+\frac{\Delta t}{2\alpha}f^{n+\frac{3}{4}}\qquad\mbox{ in }\Omega,
\end{equation}
(with boundary condition $u^{n+1}=g^{n+1}$ on $\partial\Omega$);
cf.~\cite{bruno10}.

An algorithm based on the ADI iteration
(\ref{eq:heat-adi-1})-(\ref{eq:heat-adi-2}) can be conveniently
obtained as a sequence of four operations involving two additional
auxiliary quantities $w^{n}$ and $w^{n+\frac12}$:
\begin{itemize}
\item[(D1)] Initialize $u^{0}$ and $w^{0}$ as
\begin{equation*}
\left\{
\begin{array}{ll}
u^{0}=u_{0}\qquad&\mbox{ in }\Omega,\\
w^{0}=\displaystyle\left(1
+\Delta t\frac{\partial_y\beta}{2\alpha}\partial_{y} 
+\Delta t\frac{\beta}{2\alpha}\partial_{y}^{2}\right)u^{0}
\qquad&\mbox{ in }\Omega.
\end{array}\right.
\end{equation*}
\end{itemize}
\noindent 
Then, for $n=0,1,\ldots,n_{\mathrm{max}}$,
\begin{itemize} 
\item[(D2)] Obtain $u^{n+\frac12}$ by solving the boundary value
  problem
\begin{equation}\label{D2}
\left\{
\begin{array}{ll}
\displaystyle\left(1
-\Delta t\frac{\partial_x\beta}{2\alpha}\partial_{x} 
-\Delta t\frac{\beta}{2\alpha}\partial_{x}^{2}\right)u^{n+\frac12}
=w^{n}+\frac{\Delta t}{2\alpha}f^{n+\frac{1}{4}}&
\qquad\mbox{ in }\Omega,\\[0.2cm]
u^{n+\frac12}=g^{n+\frac12}&
\qquad\mbox{ on }\Omega.
\end{array}\right.
\end{equation}
\noindent 
\item[(D3)] Update $w^{n+\frac12}$ according to
\begin{equation*}
w^{n+\frac12}=
  2u^{n+\frac12}-w^{n}-\frac{\Delta t}{2\alpha}f^{n+\frac{1}{4}}
\qquad\mbox{ in }\Omega,
\end{equation*}
and, finally,
\item[(D4)] Obtain $u^{n+1}$ by solving the boundary value problem
\begin{equation}\label{D4}
\left\{
\begin{array}{ll}
\displaystyle\left(1
-\Delta t\frac{\partial_y\beta}{2\alpha}\partial_{y} 
-\Delta t\frac{\beta}{2\alpha}\partial_{y}^{2}\right)u^{n+1}
=w^{n+\frac12}+\frac{\Delta t}{2\alpha}f^{n+\frac{3}{4}}&
\qquad\mbox{ in }\Omega ,\\
u^{n+1}=g^{n+1}&
\qquad\mbox{ on }\partial\Omega.
\end{array}\right.
\end{equation}
\end{itemize}
The ADI scheme thus requires solution of the one-dimensional boundary
value value problems (D2) and (D4) (see Section~\ref{sec:ODE-theory})
and solution updates (D1) and (D3). Each one of these operations
involves differential operators with respect to a single spatial
variable---as it behooves an ADI discretization~\cite{marchuk90}.  
\begin{remark}
  An implementation of this algorithm for the case in which the
  coefficients $\alpha$ and $\beta$ are constant was put forth
  in~\cite{bruno10}. In that reference it was noted that, for
  $n\in\mathbb{N}$, $u^{n+1}$ is a globally second order accurate
  approximation of the exact solution of the diffusion equation
  (\ref{eq:heat}): the error at any fixed time step $t=t^*$ is a
  quantity of order ${\mathcal O}(\Delta t)^2$. This is in spite of
  the approximation $u^{n+\frac12}=g^{n+\frac12}$, which is necessary
  to enable applicability to complex domains, and which induces a
  second-order local truncation error in step (D2); see~\cite[Remark
  3.2]{bruno10} for details. Numerical experiments we present in this
  paper reveal once again a second-order global error in the solutions
  resulting from the scheme above. As shown in
  reference~\cite{bruno10} and Section~\ref{diff_numer}, further,
  solutions of higher order of temporal accuracy can be extracted from
  the solutions produced by the ADI scheme above by means of the
  Richardson extrapolation methodology.
\end{remark}

\subsubsection{Wave equation}
\label{wave:time-discrete}
To derive our Alternating Direction scheme for the linear wave
problem~(\ref{eq:wave}), in turn, we follow~\cite{lyon10} and note
that the exact solution $\phi^{n}(x,y) = u(x,y,t_n)$ satisfies the
discrete relation
\begin{equation}\label{eq:ADI_2nd}
\left\{
\begin{array}{ll}
\alpha\displaystyle\frac{\phi^{n+1}-2\phi^{n}+\phi^{n-1}}{\Delta t^2}
 -\mbox{div}(\beta\mbox{grad}\phi^{n+1})
=f^{n+\frac12}+\mathcal{O}(\Delta t)
\qquad&\mbox{ in }\Omega,\\
\phi^{0}=u_{0}
\qquad&\mbox{ in }\Omega,\\[0.2cm]
\phi^{1}=u_{0}+\Delta t u_{1}+\mathcal{O}(\Delta t)
\qquad&\mbox{ in }\Omega,\\[0.2cm]
\phi^{n+1}=g^{n+1}\qquad&\mbox{ in }\partial\Omega,
\end{array}
\right.
\end{equation}
where, once again, $f^{n+\frac12}(x)=f(x,y,t_{n+\frac12})$ and
$g^{n+1}(x)=g(x,y,t_{n+1})$.  Following~\cite{lyon10} we split the
stiffness term and thus obtain the ADI time-stepping scheme
\begin{itemize}
\item[(W1)] Initialize $u^{0}$ and $u^{1}$ as
\begin{equation*}
\left\{
\begin{array}{ll}
u^{0}=u_{0}\qquad&\mbox{ in }\Omega,\\
u^{1}=u_{0}+\Delta t u_{1}\qquad&\mbox{ in }\Omega.
\end{array}\right.
\end{equation*}
\end{itemize}
Then, for $n=0,1,\ldots,n_{\mathrm{max}}$,
\begin{itemize} 
\item[(W2)] Obtain $w^{n+\frac12}$ by solving 
the boundary value problem,
\begin{equation}\label{W2}
\left\{
\begin{array}{ll}
\displaystyle\left(1
-\Delta t^2\frac{\partial_x\beta}{\alpha}\partial_{x} 
-\Delta t^2\frac{\beta}{\alpha}\partial_{x}^2\right)w^{n+\frac12}
=2u^{n}-u^{n-1}+\frac{\Delta t^2}{\alpha}f^{n+\frac{1}{2}}&
\qquad\mbox{ in }\Omega,\\[0.2cm]
w^{n+\frac12}=g^{n+1}&
\qquad\mbox{ on }\partial\Omega
\end{array}\right.
\end{equation}
\item[(W3)] Obtain $u^{n+1}$ as the solution of the boundary value
  problem
\begin{equation*}
\left\{
\begin{array}{ll}
\displaystyle\left(1
-\Delta t^2\frac{\partial_y\beta}{\alpha}\partial_{y} 
-\Delta t^2\frac{\beta}{\alpha}\partial_{y}^2\right)u^{n+1}
=w^{n+\frac12}&\qquad\mbox{ in }\Omega,\\[0.2cm]
u^{n+1}=g^{n+1}&\qquad\mbox{ on }\partial\Omega,
\end{array}\right.
\end{equation*}
\end{itemize}
\begin{remark}
  Note that the expressions~(D2)-(D4), and (W2)-(W3) of the
  alternating-direction ODEs for the heat and wave equations
  incorporate a division by the lowest-order variable-coefficient
  $\alpha$ in equations~\eqref{eq:heat_crank} and~\eqref{eq:ADI_2nd}.
  This is an essential detail of our algorithm: if such a division by
  $\alpha$ is not incorporated in the algorithm, the iterative GMRES
  solution of these ODEs (which is presented in
  Section~\ref{sec:solver-implementation}) would require large numbers
  of iterations. In the divided form, in contrast, the matrix of the
  linear-algebra problem is close to the identity for small $\Delta
  t$, and small numbers of GMRES iterations suffice to yield highly
  accurate ODE solutions. In fact, we have found in practice that the
  divided ODE forms give rise to small numbers of iterations even for
  large values of $\Delta t$.
\end{remark}
\begin{remark}
  It is easy to check that the scheme (W1)-(W3) is first order
  consistent. We refer to~\cite[Section 5]{lyon10} for a discussion of
  the global order of accuracy of the algorithm; in practice, and in
  agreement with that reference, we find the algorithm produces
  solutions with global first order accuracy. As shown in
  reference~\cite{bruno10} and Section~\ref{wave_numer}, further,
  solutions of higher order of temporal accuracy can be extracted from
  the solutions produced by the ADI scheme above by means of the
  Richardson extrapolation methodology.
\end{remark}

Problems~(D2),~(D4),~(W2) and~(W3) amount to two-point BVP of the form
\begin{align}
& \displaystyle u-pu'-qu''=f
\qquad \mbox{ in }(a,b),\label{eq:pb-orig}\\
& u(a)=d_{a}\quad ,\quad u(b)=d_{b},\label{eq:pb-orig_2}
\end{align}
where the right-hand side $f$ and the variable coefficients $p$ and
$q$ are bounded smooth functions defined in the interval $[a,b]$, with
$q>\eta>0$ for some constant $\eta$.  More precisely, the ODE
coefficients and the right-hand sides of the problems~(D2) and~(D4),
(W2) and (W3) are given by
\begin{equation}
\label{eq:coefODE_heat_wave}
\left\{
\begin{array}{ll}
  \!\!p(x)=\mathcal{P}^{H}(x,y),\ 
  q(x)=\mathcal{Q}(x,y),\ 
  f(x)=\mathcal{F}(x,y,t)
  &\mbox{ in problems (D2) and (W2) (}y,t\mbox{ fixed), and}\\
  \!\!p(y)=\mathcal{P}^{V}(x,y),\ 
  q(y)=\mathcal{Q}(x,y),\ 
  f(y)=\mathcal{F}(x,y,t)
  &\mbox{ in problems (D4) and (W3) (}x,t\mbox{ fixed),}
\end{array}
\right.
\end{equation}
where for~(D2) and~(D4)
\begin{equation}
\label{eq:coefODE_heat_def}
\mathcal{P}^{H}(x,y)=
\Delta t\frac{\partial_{x}\beta(x,y)}{2\alpha(x,y)},\ 
\mathcal{P}^{V}(x,y)=
\Delta t\frac{\partial_{y}\beta(x,y)}{2\alpha(x,y)},\ 
\mathcal{Q}(x,y)=
\Delta t\frac{\beta(x,y)}{2\alpha(x,y)},\ 
\mathcal{F}(x,y,t)=
\Delta t\frac{f(x,y,t)}{2\alpha(x,y)},
\end{equation}
while for~(W2) and~(W3)
\begin{equation}
\label{eq:coefODE_wave_def}
\mathcal{P}^{H}(x,y)=
\Delta t^2\frac{\partial_{x}\beta(x,y)}{\alpha(x,y)},\ 
\mathcal{P}^{V}(x,y)=
\Delta t^2\frac{\partial_{y}\beta(x,y)}{\alpha(x,y)},\ 
\mathcal{Q}(x,y)=
\Delta t^2\frac{\beta(x,y)}{\alpha(x,y)},\ 
\mathcal{F}(x,y,t)=
\Delta t^2\frac{f(x,y,t)}{\alpha(x,y)}.
\end{equation}

To solve the two-point BVP~\eqref{eq:pb-orig}-\eqref{eq:pb-orig_2} 
numerically we utilize a discrete method 
based on three main elements: 1) The Fourier Continuation method (see
Section~\ref{sec:fc-gram}) to produce accurate Fourier-series
approximations of the ODE source terms and variable coefficients; 2) A
specialized Fourier collocation method for solution of ODE boundary
value problems (see Section~\ref{sec:fourier-discrete})---which, in
view of item~1), can be applied to non-periodic boundary-value
problems without the accuracy degradation associated with the Gibbs
phenomenon; and~3) A new strategy for the enforcement the boundary
conditions in the context arising from items~1) and~2) (see
Sections~\ref{sec:equiv-for} and \ref{asympt_exp}).


\section{Fourier Continuation and scaled FC(Gram)}
\label{sec:fc-gram}
\subsection{Discrete Fourier analysis background}
Let $\mathcal{C}^{k}_{\mathrm{per}}(\tildea,\tildeb)$ denote the space
of $k$-times continuously differentiable periodic functions of period
$\tildeb-\tildea$. The discrete Fourier series of
$v\in\mathcal{C}^{k}_{\mathrm{per}}(\tildea,\tildeb)$ is given by
$$
\mathcal{J}v(x)=\sum_{n\in\mathcal{T}(\Nfourier)} \hat{v}_{n}
\mathrm{e}^{i\frac{2\pi n(x-\tildea)}{\tildeb-\tildea}} \in
\mathsf{B}(\tildea,\tildeb).
$$
Here $\mathcal{T}(\Nfourier)=
\{n\in\mathbb{N}:-\Nfourier/2+1\le n\le \Nfourier/2\}$ for
$\Nfourier$ even and $\mathcal{T}(\Nfourier)=
\{n\in\mathbb{N}:(\Nfourier-1)/2\le n\le -(\Nfourier-1)/2\}$
for $\Nfourier$ odd, 
\begin{equation}
  \mathsf{B}(\tildea,\tildeb)=
  \left\{g:(\tildea,\tildeb)\to\mathbb{R} \mbox{ such that } 
    g(x) = \sum_{n\in\mathcal{T}(\Nfourier)} g_{n}
\mathrm{e}^{i\frac{2\pi n (x-\tildea)}{\tildeb-\tildea}}\right\}
\end{equation}
denotes the $\Nfourier$-dimensional space of trigonometric
polynomials, and the amplitudes $\hat{v}_{n}$ are given by the
trapezoidal-rule expression
$$
\hat{v}_{n}=\frac{1}{\Nfourier}
\sum_{j=1}^{\Nfourier}v(x_{j})
\mathrm{e}^{-i\frac{2\pi n (x_{j}-\tildea)}{\tildeb-\tildea}},
\qquad n\in\mathcal{T}(\Nfourier).
$$
Note that, for conciseness, the ``degree'' $\Nfourier$ is not
explicitly displayed in the notation
$\mathsf{B}(\tildea,\tildeb)$.

We point out that, as is well-known~\cite{hesthaven07,boyd01},
\begin{enumerate}
\item An element of the set $\mathsf{B}(\tildea,\tildeb)$ is
  determined uniquely by its values at the equispaced grid
  $\tildea=x_{1}<\ldots<x_{\Nfourier}=\tildeb-h$, where
\begin{equation}\label{eq:fourier-grid-points}
  x_{j}=\tildea+(j-1)h,\qquad j=1,\ldots,\Nfourier,\quad 
\quad h=(\tildeb-\tildea)/\Nfourier, \mbox{ and}, 
\end{equation}
\item The discrete Fourier operator $\mathcal{J}$ is an interpolation
  operator from $\mathcal{C}^{k}_{\mathrm{per}}(\tildea,\tildeb)$
  into $\mathsf{B}(\tildea,\tildeb)$, that is,
  $\mathcal{J}v(x_j)=v(x_j)$ for all $j=1,\ldots,\Nfourier$.
\end{enumerate}

\subsection{Fourier Continuation and FC(Gram)\label{fc_gram}}
The Fourier Continuation method~\cite{bruno10,albin11} is an algorithm
which, acting on a set of values $f(x_{1}),\ldots,f(x_{\Nodes})$ of a
function $f:[a,b]\to\mathbb{R}$ at $\Nodes$ equidistant points $x_{j}\in
[a,b]$, produces a trigonometric polynomial
$f^{c}\in\mathsf{B}(\tildea,\tildeb)$, of a given degree
$\Nfourier$ and on a given interval
$[\tildea,\tildeb]\supset[a,b]$ with $b\le c$, which approximates the 
function $f$ closely on the interval $[a,b]$. Note that the endpoints 
$a$ and $b$ are not required to belong to the Fourier grid $\{x_j\}_{j=1}^{\Nodes}$. 
Clearly, for $[a,b]=[\tildea,\tildeb]$, the trigonometric interpolation
operator $\mathcal{J}$ leads to a naive Fourier Continuation procedure
which gives rise to the well known Gibbs ringing near $x=a$ and
$x=b$---unless $f$ is a smooth periodic function of period $(b-a)$. 
The selection of a larger expansion interval $[\tildea,\tildeb]$
allows for a smooth transition between the values $f(x_{j})$ near
$j=\Nodes$ to the values $f(x_{j})$ near $j=1$, and thus enables highly
accurate Fourier approximation in the interval $[a,b]$, avoiding the
undesirable Gibbs ringing effect and related accuracy deterioration.

A number of algorithms has been introduced which provide accurate
Fourier continuations for smooth non-periodic functions (including,
for example, the FC(SVD) method~\cite{bruno10}, which relies on
Singular Value Decompositions and the related
algorithms~\cite{bruno07,boyd02,bruno03}). Use of the spectrally
accurate Fourier continuations as a component of efficient PDE
solvers, however, must rely on correspondingly efficient Fourier
Continuation algorithms. For that purpose, an accelerated FC method,
the FC(Gram) procedure, was introduced
recently~\cite{bruno10,lyon10,albin11}.  In this section, we present a
brief description of FC(Gram) method; full details can be found
in~\cite{bruno10}. A new ``scaled'' version of the FC(Gram) approach,
which is necessary for the applications presented in this paper, is
introduced in Section~\ref{scaled_fc_gram}.

For the present description of the FC(Gram) method, without loss of
generality we assume $[a,b] = [0,1]$, as in
references~\cite{bruno10,lyon10}.  The FC(Gram) algorithm is based on
the $\Ndelta$ left-most and $\Ndelta$ right-most grid values $f(x_j)$
of $f$ within the subintervals $[0,\Delta]$ and $[1-\Delta,1]$, with
$\Delta=h\Ndelta$. Fixed the number of extension points $\Nd$
(belonging to $[1,1+d]$ with $d=h(\Nd-1)$) where $f^{c}(x_{j})$ has to
be computed, the continuation problem is reduced to seek a smooth
periodic function $f^c$ in $[\tildea,\tildeb]=[0,1+d]$.  For adequate
reference note that our quantities $\Nodes$, $N_{\Delta}$ and $\Nd$
are denoted by $n$, $n_\Delta$ and $n_d$ in the
contribution~\cite{bruno10}.
 
To compute $f^{c}$, the left-most grid values of $f$ in $[0,\Delta]$
are mapped in $[1+d,1+d+\Delta]$. Then, they are projected on certain
Gram bases of orthogonal polynomials---with orthogonality dictated by
the natural discrete scalar product defined by the grid
points. Finally, using the orthogonal polynomial expansion, a periodic
matching function $f_{\mathrm{match}}$, in the interval
$[1-\Delta,1+2d+\Delta]$ is computed. This function consists of a sum
of contributions of each polynomial projection (up to degree $\Ndeg$)
and blends smoothly the values at the left and right subintervals,
$[1-\Delta,1]$ and $[1+d,1+d+\Delta]$. The grid values of
$f_{\mathrm{match}}$ in $[1,1+d]$ give the required grid values of
$f^{c}$. This procedure can be efficiently implemented by precomputing
the sets of matching functions $\left\{f^r_\mathrm{even}
\right\}_{r=0}^{\Ndeg}$ and
$\left\{f^r_\mathrm{odd}\right\}_{r=0}^{\Ndeg}$ associated to the even
and odd pairs of the polynomial basis in the interval $[0,1]$ and
then, each FC(Gram) continuation in an arbitrary interval can be
obtained by using an affine transformation (see \cite[Section
2.3]{bruno10} for details).

It is clear that the accuracy of the Fourier continuation operator
depends on the number $\Nodes$ of Fourier grid points contained in
$[0,1]$, the maximum degree of polynomials $\Ndeg$ used in the
polynomial projections, the number $\Ndelta$ of right and left grid
points, and also on the number $\Nd$ of extension points outside the
interval $[0,1]$ (see item~\ref{pt_two} in Section~\ref{sec:numerical}
for an indication on the actual values of these parameters used in our
numerical examples).

\subsection{Scaled FC(Gram) algorithm\label{scaled_fc_gram}}
The FC-based PDE solvers presented in this paper rely on the FC
methodology as a tool for solving two-point BVPs for ordinary
differential equations with variable coefficients. As discussed in
Section~\ref{sec:accuracy-solver}, to obtain an accurate solution of
the relevant two-point BVPs for a given PDE problem in the present
variable-coefficient context, our method requires, unlike previous
FC-based approaches, use of numbers $N_g$ of extension points and
associated extension intervals of length $g$ that vary (linearly) with
$\Nodes$ (see Section~\ref{param_selc}).  Use of such
$\Nodes$-dependent extension intervals in the procedure described in
Section~\ref{fc_gram} entails evaluation of correspondingly
$\Nodes$-dependent sets of matching functions
$\left\{f^r_\mathrm{even}\right\}_{r=0}^{\Ndeg}$ and
$\left\{f^r_\mathrm{odd}\right\}_{r=0}^{\Ndeg}$---a procedure that is
inelegant and computationally expensive.

To overcome this difficulty, a slightly modified ``scaled'' version of
the FC(Gram) algorithm is introduced in this section. This approach is
based on use of a set of precomputed matching functions
$f_{\mathrm{match}}$, as described in Section~\ref{fc_gram}, in a
fixed interval $[1,1+d]$ and for a certain fixed value $\Nd$.  To
obtain inexpensively a matching function at a new number $N_{g}$ of
extension points in a new interval $[1,1+g]$ (where it is assumed that
$g/N_{g} = d/\Nd$), the ``scaled'' procedure uses as a matching
function a composition of the form 
\begin{equation}\label{scaled_f_match}
f_{\mathrm{match}}\circ \xi,
\end{equation}  
where $\xi : [1-\Delta,1+g+\Delta]\to [1-\Delta,1+d+\Delta]$ is a
smooth diffeomorphism.  To preserve the point values of
$f_{\mathrm{match}}$ in $[1-\Delta,1]$ and $[1+d,1+d+\Delta]$, the
scaling function $\xi$ is assumed to be map linearly the intervals
$[1-\Delta,1]$ and $[1+g,1+g+\Delta]$ onto $[1-\Delta,1]$ and
$[1+d,1+d+\Delta]$, respectively.  Throughout this paper we use the
scaling function
\begin{equation}\label{xi_def}
\xi(s)=1-\Delta+(d+2\Delta)
\mathrm{frac}\left(\frac{s+(d-g)\phi((s-1)/g)-1+\Delta}{d+2\Delta}\right)
\qquad s\in[1,1+g],
\end{equation}
where, letting $\lfloor x \rfloor$ denote the largest integer less than 
or equal to $x$, we have set $\mathrm{frac}(x) = x-\lfloor x\rfloor$, 
and the auxiliary function $\phi$ is given by
\begin{equation*}
\phi(s)=
\begin{cases}
1 & \mbox{ if }s>1,\\
\displaystyle\left(1+\exp\left(\frac{1}{s}-\frac{1}{1-s}\right)\right)^{-1} 
& \mbox{ if }s\in[0,1],\\
0 & \mbox{ if }s<0.\\
\end{cases}
\end{equation*}

In what follows, the scaled FC(Gram) continuation of a function $f$
resulting from this procedure will be denoted by either of the
following symbols
\begin{equation}\label{cont_oper}
  \tilde f  = E(f) = E_g(f).
\end{equation}
The first two of these notations can of course be used only when the
value of $g$ is either inconsequential or implicit in the context.
The scaled FC(Gram) algorithm produces Fourier continuations for
varying values of $g$ and $N_{g}$ at a computational
cost that is essentially the same as that required by the original
FC(Gram) procedure for a fixed number $\Nd$ of extension points.

\begin{remark}\label{FCGram_cost}
  Once the even and odd continuation of the Gram polynomials are
  computed and stored in memory, the FC(Gram) and scaled FC(Gram)
  continuations of any function only involve the computation of
  $\Ndeg$ inner products of size $\Ndelta$, which requires $2\Ndeg$
  sums and products, and the evaluation of a trigonometric
  interpolant, which is performed by means of an FFT of size
  $\Nodes+N_{g}-1$. Since $\Ndeg$ and $\Ndelta$ are independent of $\Nodes$,
  both the scaled and un-scaled versions of the FC(Gram) algorithm run
  at a computational cost of $\mathcal{O}((\Nodes+N_{g}-1)\log(\Nodes+N_{g}-1))$
  operations.
\end{remark}

\section{FC BVP solver~I: particular solution and boundary conditions}
\label{sec:ODE-theory}
This section presents an FC-based method for solution of
variable-coefficient BVPs of the
form~\eqref{eq:pb-orig}--\eqref{eq:pb-orig_2}.  As explained in what
follows, this approach relies on the scaled FC(Gram) method to produce
a periodic extension of the non-periodic
problem~\eqref{eq:pb-orig}--\eqref{eq:pb-orig_2} to an interval
$(\tildea,\tildeb)\supset (a,b)$, with boundary values at the
endpoints of the original interval $(a,b)$. As detailed in
Section~\ref{sec:fourier-discrete}, a particular solution for
equation~\eqref{eq:pb-orig} can easily be produced on the basis of the
FC(Gram) method. The enforcement of the boundary conditions, which
requires some consideration, is presented in
Sections~\ref{sec:equiv-for} and~\ref{asympt_exp}. In the first one of
these sections a direct numerical approach is presented for the
evaluation of solutions of the homogeneous
problem~\eqref{eq:pb-orig}--\eqref{eq:pb-orig_2} ($f=0$) with non-zero
boundary values, which can be used to correct the boundary
values of a particular solution. In Section~\ref{asympt_exp} a
complementary approach is introduced, which can effectively treat
challenging boundary layers and stiff equations that arise as small
time steps and correspondingly small coefficients $q(x)$ are used
(cf. equations~\eqref{D2},~\eqref{D4} and~\eqref{W2}).

\subsection{FC-based particular solution}
\label{sec:fourier-discrete}
To obtain a periodic embedding to the interval
$(\tildea,\tildeb)\supset (a,b)$ of the
BVP~\eqref{eq:pb-orig}--\eqref{eq:pb-orig_2} we utilize Fourier
continuations of the functions $f$, $p$ and $q$.  To preserve the
ellipticity of the problem, however, we must ensure that the extension
used for the coefficient $q$ takes on strictly positive values. To
produce such strictly positive extension of a positive function $q$ we
utilize a infinitely differentiable diffeomorphism
$\eta:\mathbb{R}\to(C_1,C_2)$ such as, for instance,
$\eta(s)=C_{1}+(C_{2}-C_{1})(1+\arctan(s))/2$. Using such a
diffeomorphism we define certain ``limited extensions'' of a positive
function $q$ by means of the expression
\begin{equation}\label{limited_ext}
  \tilde{q}^{\ell}=\eta\circ E(\eta^{-1}\circ q),
\end{equation}
where $E$ denotes the Fourier continuation procedure defined
in~\eqref{cont_oper}. The expression~\eqref{limited_ext} ensures that
the values of the periodic extensions $\tilde{q}^{\ell}$ are bounded
by above and below by the positive values $C_{1}$ and $C_{2}$,
respectively.  Hence, the ``periodic embedding'' of the ODE
problem~\eqref{eq:pb-orig}-\eqref{eq:pb-orig_2} is given by
\begin{equation}\label{per_ext}
u-\tilde{p}u'-\tilde{q}^{\ell}u''=\tilde{f}\quad\mbox{in}\quad
(\tildea,\tildeb),
\end{equation}
where, in accordance with equation~\eqref{cont_oper}, $\tilde{p}$ and
$\tilde{f}$ denote the scaled FC(Gram) continuation of the functions
$p$ and $f$.

To produce approximate periodic solutions of period
$\tildeb-\tildea $ of the ODE problem \eqref{eq:pb-orig}, our
algorithms relies on Fourier collocation: a numerical solution of
the form
\begin{equation*}
u_{\tilde{f}}(x)=\sum_{n\in\mathcal{T}(\Nodes+\Nd-1)} 
\hat{u}_{n}\mathrm{e}^{i\frac{2\pi n 
(x-\tildea)}{\tildeb-\tildea}}\in\mathsf{B}(\tildea,\tildeb)
\end{equation*}
is sought that satisfies~\eqref{eq:pb-orig} at each one of the grid
points $x_{j}$, $j=1,\ldots,\Nodes+\Nd-1$, that is
\begin{equation}
\label{eq:ode-lin-sys}
u_{\tilde{f}}(x_{j})
-\tilde{p}(x_{j})\frac{du_{\tilde{f}}}{dx}(x_{j})
-\tilde{q}^{\ell}(x_{j})\frac{d^{2}u_{\tilde{f}}}{dx^2}(x_{j})
=\tilde{f}(x_{j})
\qquad\mbox{ for } j=1,\ldots,\Nodes+\Nd-1.
\end{equation}
(if problem \eqref{eq:pb-orig}-\eqref{eq:pb-orig_2} has a unique
solution, this system of equations is not singular, see
e.g.~\cite{boyd01}.)  Since the solution $u_{\tilde{f}}$ is determined
uniquely by its values $u_{\tilde{f}}(x_{j})$ for
$j=1,\ldots,\Nodes+\Nd-1$ the $\mathbb{R}^{\Nodes+\Nd-1}$-vector of
point values of $u_{\tilde{f}}(x_{j})$, $j=1,\ldots,\Nodes+\Nd-1$ are
used as the unknowns of the problem. This linear system of equations
is solved by means of the iterative solver GMRES; see
Section~\ref{gmres} for details.
\begin{remark}
  Note that, since the coefficients $\tilde{p}$ and $\tilde{q}^{\ell}$
  are variable, the matrix associated to the linear system
  (\ref{eq:ode-lin-sys}) is not sparse. To avoid the expense
  associated with a direct solution of this linear system we utilize a
  preconditioned iterative solver, as described in
  Section~\ref{ode_impl}.
\end{remark}
Clearly, the particular solution described in this section does not
generally satisfy the boundary conditions imposed in
\eqref{eq:pb-orig_2}. The necessary corrections are described in the
following section.

\subsection{Boundary conditions~I: exterior sources}
\label{sec:equiv-for}

To enforce the necessary boundary conditions in~(\ref{eq:pb-orig_2})
we consider two auxiliary boundary value problems involving
ODEs of the form~(\ref{eq:pb-orig}) but with certain adequately-chosen
right-hand sides $g_{a}$, $g_{b}\in\mathsf{B}(\tildea,\tildeb)$.  The
right-hand sides $g_{a}$ and $g_{b}$ are taken to vanish at all
discretization points in the original interval $(a,b)$ but not to
vanish in $(b,c)$---that is to say, the selected right-hand sides
correspond to sources supported in the exterior of the physical domain
$[a,b]$, see e.g. Figure~\ref{fig:FC-gram-scaled} right. Clearly, any
linear combination of the form $\tilde{f}+\lambda_{a}g_{a}+
\lambda_{b}g_{b}$ with $\lambda_{a},\lambda_{b}\in\mathbb{R}$
coincides with $\tilde{f}$ in the part of the spatial grid contained
in $(a,b)$.  Calling $u_{a}$ and $u_{b}\in\mathsf{B}(\tildea,\tildeb)$
the approximate FC solutions (as described in
Section~\ref{sec:fourier-discrete}) corresponding to $g_{a}$ and
$g_{b}$, it is clear that any linear combination of the form
$\lambda_{a}u_{a}+\lambda_{b}u_{b}$ satisfies the ODE
(\ref{eq:pb-orig}) with null right-hand side at each one of the
collocation points $x_{j}\in(a,b)$. It follows that, denoting by
$(\lambda_{0,a},\lambda_{0,b})$ and $(\lambda_{1,a},\lambda_{1,b})$
the solutions of the $2\times2$ linear systems
\begin{equation}
\label{eq:linear-sys-bc-dis-1}
\begin{pmatrix}
u_{a}(a) & u_{b}(a)\\
u_{a}(b) & u_{b}(b)\\
\end{pmatrix}
\begin{pmatrix}
\lambda_{0,a}\\
\lambda_{0,b}\\
\end{pmatrix}
=
\begin{pmatrix}
1\\
0\\
\end{pmatrix}\qquad \mbox{and}\qquad
\begin{pmatrix}
u_{a}(a) & u_{b}(a)\\
u_{a}(b) & u_{b}(b)\\
\end{pmatrix}
\begin{pmatrix}
\lambda_{1,a}\\
\lambda_{1,b}\\
\end{pmatrix}
=
\begin{pmatrix}
0\\
1\\
\end{pmatrix},
\end{equation}
(as shown below, the system matrix is invertible for sufficiently fine
discretizations provided the functions $g_{a}$ and $g_{b}$ are chosen
appropriately), the functions
$w_{a}=\lambda_{0,a}u_{a}+\lambda_{0,b}u_{b}$ and
$w_{b}=\lambda_{1,a}u_{a}+\lambda_{1,b}u_{b}$ are approximate FC
solutions of~\eqref{eq:pb-orig} with null right-hand side, 
which satisfy the Dirichlet boundary
conditions $w_{a}(a)=1$, $w_{a}(b)=0$, $w_{b}(a)=0$ and $w_{b}(b)=1$.
Thus, the Fourier series
\begin{equation}\label{u_nodes}
u_{\Nodes}=u_{\tilde{f}}+(d_{a}-u_{\tilde{f}}(a))w_{a}+(d_{b}-u_{\tilde{f}}(b))w_{b}
\end{equation}
is an approximate FC solution of ODE problem~\eqref{eq:pb-orig} that
satisfies exactly the prescribed Dirichlet boundary
conditions~\eqref{eq:pb-orig_2}.

The needed invertibility of the system matrix in
equation~(\ref{eq:linear-sys-bc-dis-1}) for sufficiently fine
discretizations indeed holds provided the functions $g_{a}$ and
$g_{b}$ satisfy the conditions
\begin{equation}\label{eq:assump-lemma-ga-gb}
\int_{a}^{b}g_{a}(x)\,\mathrm{d}x>0,\qquad \int_{a}^{b}g_{b}(x)\,\mathrm{d}x=0.
\end{equation}
To establish this fact we let $U_a$ and $U_b$ be the {\em exact}
$(c-a)$-periodic solutions of equation~\eqref{eq:pb-orig} with
respective right-hand sides $g_{a}$ and $g_{b}$, and, using
Lemma~\ref{lem:ode} presented in Appendix~\ref{Appen}, we show at
first that the ``exact-solution system matrix'', that is, the matrix
that results as $u_a$ and $u_b$ are substituted by $U_a$ and $U_b$ in
equation~\eqref{eq:linear-sys-bc-dis-1}, is invertible.  Indeed, if
the exact-solution system matrix is singular then the $U_{a}$ and
$U_{b}$ satisfies $U_{a}(a)+\mu U_{b}(a)=0$ and $U_{a}(b)+\mu
U_b(b)=0$ for a certain $\mu\in\mathbb{R}$. The function $V =
U_{a}+\mu U_{b}$ is then a solution of the two-point 
BVP~(\ref{eq:pb-orig})--(\ref{eq:pb-orig_2}) with $f=0$, $d_a=0$ and
$d_b=0$, and thus, it vanishes identically since, in view of
Lemma~\ref{lem:ode} this problem admits a unique solution. It then
follows by the smoothness and periodicity of $V$ that $V(b)=V'(b)=0$
and $V(c)=V'(c)=0$.  But this is a contradiction, since
Lemma~\ref{lem:ode} tells us that such solutions do not exist under
the hypothesis~\eqref{eq:assump-lemma-ga-gb}. Hence, the
exact-solution system matrix is invertible. The invertibility for the
approximate-solution system matrix~\eqref{eq:linear-sys-bc-dis-1} for
sufficiently fine grids then follows from the convergence of the FC
solutions $u_a$ and $u_b$ to $U_{a}$ and $U_{b}$ as the grid-size
tends to zero (see Remark~\ref{convergence}).
\begin{remark}\label{convergence}
  Stability and convergence proofs, which are beyond the scope of this
  paper, are left for future work; here we merely note that
  convergence of the FC solutions to exact solutions is amply
  demonstrated by the numerical results presented in
  Sections~\ref{ode_impl} and~\ref{sec:full-disc}.
\end{remark}

\begin{remark}
  For the numerical experiments presented in this work we have used
  the auxiliary right-hand sides
\begin{align*}
g_{a}&=\phi\circ \psi,\\
g_{b}&=\frac{\psi g_{a}}
{\displaystyle\max_{x\in(b,\tildeb)}
\left|\psi(x)g_{a}(x)\right|},
\end{align*}
where $\psi(x)=2x-(b+\tildeb)/(\tildeb-b)$ and
$\phi(x)=\exp(1-1/(1-x^2))$; see
e.g. Figure~\ref{fig:FC-gram-scaled}. As required by
Lemma~\ref{lem:ode} in Appendix~\ref{Appen}, the functions $g_{a}$ and
$g_{b}$ satisfy the assumption~\eqref{eq:assump-lemma-ga-gb}, and
their supports, which are contained in the interval $(b,\tildeb)$, do
not intersect the interval $(a,b)$.
\end{remark}

\begin{remark}\label{bound_lay}
  Note that the solution $u$ and its numerical approximation
  $u_\Nodes$ generally possess boundary
  layers~\cite[Ch. 7]{bender_orzag} at the interval endpoints $a$ and
  $b$ in cases in which the coefficient function $\tilde{q}^{\ell}$ in
  equation~\eqref{eq:ode-lin-sys} is small---which, in view of
  equations~\eqref{D2},~\eqref{D4} and~\eqref{W2}, it certainly is for ODE
  problems arising from the FC PDE solver for small values of $\Delta
  t$. The presence of such boundary layers can be appreciated by
  consideration of equation~\eqref{u_nodes}: the functions $w_a$ and
  $w_b$ provide (numerical approximations of) the boundary-layer
  contributions.
\end{remark}

\begin{remark}
  The numerical boundary-layer FC solutions $w_{a}$ and $w_{b}$
  mentioned in Remark~\ref{bound_lay} may be affected by Gibbs-like
  ringing errors near the endpoints unless the underlying grid
  adequately resolves the exact boundary layers.  To ensure the
  detection of a non-resolved boundary layer (so that a procedure can
  be used to guarantee that the discretization errors are not
  inherited by $w_{a}$ and $w_{b}$ in stiff problems), the grid size
  $h$ is compared to the quantities
  $\sqrt{\tilde{q}^{\ell}(a)}$ and
  $\sqrt{\tilde{q}^{\ell}(b)}$.  If the grid-size
  $h$ is of the order of or smaller than these quantities, the
  boundary layer is well-resolved by the numerical approximations
  $w_{a}$ and $w_{b}$ and no further action is needed.  Otherwise, if
  $h$ is larger than $\sqrt{\tilde{q}^{\ell}(a)}$
  and $\sqrt{\tilde{q}^{\ell}(b)}$, then $w_{a}$
  and $w_{b}$ do not adequately resolve the boundary layers, and an
  alternative treatment is necessary
  (Remark~\ref{rem:detect_boundary_layer} presents details on the
  actual thresholds used in our numerical examples). Such an
  alternative method, based on use of asymptotic expansions, is
  presented in Section~\ref{asympt_exp}.
\end{remark}

\subsection{Boundary conditions~II: High-order asymptotic-matching
  expansions\label{asympt_exp}}

As noted in the introductory paragraph of
Section~\ref{sec:ODE-theory}, stiff ODEs and challenging boundary
layers arise in the solutions of the boundary value
problem~\eqref{eq:pb-orig}--(\ref{eq:pb-orig_2}) as small time steps
are used. In this section we present an algorithm that can resolve
such boundary layers, without resort to unduly fine spatial meshes, on
the basis of the method of matched asymptotic
expansions~\cite{bender_orzag}.  For our development of this
asymptotic procedure, we introduce a small positive parameter
$\varepsilon$ (that is taken to equal $\Delta t$ in
equation~\eqref{W2} and $\sqrt{\Delta t/2}$ in equations~\eqref{D2} 
and~\eqref{D4}), we
let $p = \varepsilon^2 p_0$ and $q= \varepsilon^2 q_0$, and we
re-express equation~\eqref{eq:pb-orig} in the form
\begin{align}
& (Lu)(x)=r(x)u(x)-\varepsilon^2
\displaystyle\frac{d}{dx}\left(s(x)\frac{du}{dx}(x)\right)=f(x)\quad,
\quad x\in(a,b),\label{eq:pb-orig-div-form}\\
& u(a)=d_{a}\quad ,\quad u(b)=d_{b},\label{eq:pb-orig_2-div-form}
\end{align}
where $f$ and the variable coefficients $s$ and $r$ are bounded smooth
functions defined in the interval $[a,b]$ (which satisfy the conditions
$s,r>\eta>0$ for some constant $\eta$) given by
$$
p_0(x)=\frac{1}{r(x)}\frac{ds}{dx}(x),\qquad q_0(x)=\frac{s(x)}{r(x)}.
$$
Further, using the change of variables
\begin{equation}\label{ch_of_vars}
  y(x)=\int_{a}^{x}\frac{1}{s(\tau)}\,\mathrm{d}\tau,\qquad x\in(a,b),
\end{equation}
equation~\eqref{eq:pb-orig-div-form} can be re-cast in the simpler
form
\begin{equation}\label{constant_coeff}
\hat{L}\hat{u}(y)=\hat{r}(y)\hat{u}(y)-\varepsilon^2
\displaystyle\frac{d^2\hat{u}}{dy^2}(y)=0, \qquad y\in(0,\hat{b}),
\end{equation}
where  $\hat{r}(y(x))=r(x)s(x)$, $\hat{b}=y(b)$ and $\hat{u}(y(x))=u(x)$

In what follows we produce, on the basis of the method of matched
asymptotic expansions~\cite{bender_orzag}, approximate solutions
$w_{a,\varepsilon}$ and $w_{b,\varepsilon}$ of the homogeneous ($f=0$)
version of equation~\eqref{eq:pb-orig-div-form}, satisfying
$w_{a,\varepsilon}(a)=1$, $w_{a,\varepsilon}(b)=0$,
$w_{b,\varepsilon}(a)=0$, $w_{b,\varepsilon}(b)=1$; like the functions
$w_a$ and $w_b$ introduced in the previous section,
$w_{a,\varepsilon}$ and $w_{b,\varepsilon}$ can be used to correct the
boundary values of a periodic FC solution $u$ of
\eqref{eq:pb-orig-div-form}. Unlike the functions introduced in the
previous section, however, the asymptotic-expansion solutions
$w_{a,\varepsilon}$ and $w_{b,\varepsilon}$ produce accurate solutions
in the small $\varepsilon$ regime without requiring use of fine
meshes, and they can be evaluated with a fixed computational cost for
arbitrarily small values of $\varepsilon$.
\begin{remark}\label{eps_delta_t}
  As mentioned above in this section, the small parameter
  $\varepsilon$ included in the variable coefficients $p$ and $q$
  corresponds to the quantities $\Delta t$ and $\sqrt{\Delta t/2}$ in
  the time discretization of the wave and heat equations,
  respectively.  Hence, the uniform convergence of the ODE solutions
  as $h\to 0$ with respect to $\varepsilon$ guarantees the spatial
  convergence of the FC-AD scheme independently of the value of
  $\Delta t$ used in the time-marching schemes.
\end{remark}

We lay down our procedure for evaluation of $w_{a}(x)$ (left boundary
layer); the corresponding method for $w_{b}(x)$ (right boundary layer)
is entirely analogous. Using the change of variables $y = y(x)$ we
define a new unknown $\hat{w}= \hat{w}(y)$ by
$\hat{w}(y(x))=w_{a}(x)$; clearly $\hat{w}(y)$ satisfies
equation~\eqref{constant_coeff} together with the boundary conditions
$$
\hat{w}(0)=1,\qquad \hat{w}(\hat{b})=0.
$$

To obtain the approximate solution $\hat{w}(y)$ for small values of
$\varepsilon$ we use outer and inner solutions in the corresponding
matched asymptotic expansion~\cite{bender_orzag} associated with the
left boundary layer.  Setting $\varepsilon=0$ in the ODE operator
$\hat{L}$ and using the null boundary condition at $y=b$ we see that
the lowest order term in the outer asymptotic expansion vanishes. In
fact, as it follows from the discussion below, the outer solution
vanishes to all orders:
$$
\hat{w}_{\mathrm{out}}(y)=0,\qquad y\in(0,\hat{b}).
$$
In the inner region, in turn, we use the scaled variable
$Y=y/\varepsilon$ and we express the inner solution
$\hat{w}_{\mathrm{in}}(Y) \sim \hat{w}(\varepsilon Y) $ and the ODE
coefficient $\hat{r}$ as asymptotic expansions
\begin{equation}\label{add_exp}
\hat{w}_{\mathrm{in}}(Y)=\sum_{j=0}^{k}\varepsilon^{j}\hat{w}_{j}(Y),\qquad
\hat{r}(\varepsilon Y)=\sum_{j=0}^{k}\varepsilon^{j}\hat{r}_{j}Y^{j}.
\end{equation}
In particular, the Taylor expansion of the ODE coefficient
$\hat{r}(\varepsilon Y)$ induces a corresponding expansion for the ODE
operator $\hat{L}$, namely
$\hat{L}=\sum_{j=0}^{k}\varepsilon^{j}\hat{L}_{j}$, where
$$
\hat{L}_{0}=-\frac{d^2}{dY^2}+\hat{r}_{0},\qquad
\hat{L}_{j}=\hat{r}_{j}Y^{j},\ j=1,\ldots,k.
$$
It follows that the functions $\hat{w}_{j}$ are solutions of the
following ODE problems:

\begin{equation}\label{eq:edo_L0}
\left\{
\begin{array}{ll}
\hat{L}_{0}\hat{w}_{0}(Y)=0,&\qquad Y\in(0,\infty),\\[0.2cm]
\hat{w}_{0}(0)=1,&\\[0.2cm]
\displaystyle\lim_{Y\to\infty}\hat{w}_{0}(Y)=0&
\end{array}
\right.
\end{equation}
for $j=0$, and
\begin{equation}\label{eq:edo_Lj}
\left\{
\begin{array}{ll}
  \hat{L}_{j}\hat{w}_{j}(Y)=
  \displaystyle\sum_{m=1}^{j}\hat{L}_{m}\hat{w}_{j-m}(Y)
  =\displaystyle\sum_{m=1}^{j}\hat{r}_{m}Y^{m}\hat{w}_{j-m}(Y),
  &\qquad Y\in(0,\infty),\\
  \hat{w}_{j}(0)=0,&\\[0.2cm]
  \displaystyle\lim_{Y\to\infty}\hat{w}_{j}(Y)=0&
\end{array}
\right.
\end{equation}
for $j=1,\ldots,k$. It is easy to check that
$\hat{w}_{0}(Y)=\exp(-\sqrt{\hat{r}_{0}}Y)$, that, similarly,
$\hat{w}_{j}$ can be expressed in closed form in terms of adequate
combinations of exponentials and polynomials 
(see Algorithm~\ref{algor:asym_expand}), and
finally, that, as claimed above, all terms in the outer asymptotic
expansion vanish.

In sum, we have
\begin{equation}\label{eq:asymp_expand}
w_{a,\varepsilon}(x)=\sum_{j=0}^{k}\varepsilon^{j}\hat{w}_{j}\left(\frac{y(x)}
{\varepsilon}\right),
\end{equation}
where the functions $\hat{w}_{j}$ satisfy~\eqref{eq:edo_L0}
and~\eqref{eq:edo_Lj}; a corresponding asymptotic boundary correction
function $w_{b,\varepsilon}$ can be obtained similarly. The maximum errors
resulting from these approximations satisfy
$||w_{a}-w_{a,\varepsilon}||=\mathcal{O}(\varepsilon^{k+1})$ and
$||w_{b}-w_{b,\varepsilon}||=\mathcal{O}(\varepsilon^{k+1})$ for small
$\varepsilon$. Using these asymptotic boundary correction functions we
then obtain an approximate FC solution of ODE
problem~\eqref{eq:pb-orig}, which satisfies exactly the prescribed
Dirichlet boundary conditions~\eqref{eq:pb-orig_2}, by means of the
expression
\begin{equation}\label{u_nodes_asymp}
  u_{\Nodes}=u_{\tilde{f}}+(d_{a}-u_{\tilde{f}}(a))w_{a,\varepsilon}
  +(d_{b}-u_{\tilde{f}}(b))w_{b,\varepsilon}.
\end{equation}
\begin{remark}\label{asymp_order_bc}
  Except for the accuracy tests presented in
  Section~\ref{sec:accuracy-solver}, all numerical examples presented
  in this paper use asymptotic expansions of order $k=3$ for
  evaluation of $w_{a,\varepsilon}$ and $w_{b,\varepsilon}$ whenever
  asymptotic expansions are needed for evaluation of boundary
  corrections.  The selection of the value $k=3$ is based, precisely,
  on a series of numerical tests presented in
  Section~\ref{sec:accuracy-solver}.
\end{remark}

We conclude this section with a description of the algorithm we use
for evaluation of the terms in the asymptotic
series~\eqref{eq:asymp_expand}. At first a Taylor series centered at
$y=0$ for the ODE coefficient $\hat{r}$ is obtained, either from a
closed form expression or by numerical differentiation of an
associated Fourier continuation series: the derivatives needed to
evaluate
\begin{equation}\label{eq:taylor_deriv}
  \hat{r}_{j} = \left.\frac{d^{j}\hat{r}}{dy^{j}}(0)=\left(s(x)\frac{d}{dx}\right)^{j}(r
    (x)s(x))
  \right|_{x=a}\mbox{ for }j=0,\ldots,k.
\end{equation}
can be accurately produced by differentiation of limited extensions
provided by the scaled FC(Gram) procedure, that is, by replacing $r$
and $s$ by $\tilde{r}^\ell$ and $\tilde{s}^\ell$, respectively (see
Subsection~\ref{sec:fourier-discrete}). Hence, only the grid values of
the ODE coefficients are necessary to approximate the Taylor
coefficients. This numerical differentiation procedure was used for
all of the examples requiring use of asymptotic expansions in
Sections~\ref{sec:accuracy-solver} and~\ref{sec:numerical}.

Additionally, it is also necessary to provide an accurate numerical
approximation of the primitive~\eqref{ch_of_vars} that defines the
change of variables $y(x)$.  Our algorithm produces this primitive by
relying on the Fourier continuation $\tilde{s}^\ell$. Using this
function we obtain the Fourier series
\begin{equation*}
  \frac{1}{\tilde{s}^\ell}(x)=\sum_{n\in\mathcal{T}(\Nodes+\Nd-1)} 
  \sigma_{n}\mathrm{e}^{i\frac{2\pi n 
      (x-\tildea)}{\tildeb-\tildea}}\in\mathsf{B}(\tildea,\tildeb)
\end{equation*}
(by means of a Fast Fourier transform (FFT)), from which integration
is straightforward:
\begin{equation}\label{eq:int_fft_y}
  y_{\Nodes}(x)=\int_{a}^{x}\left(\frac{1}{\tilde{s}^\ell(\tau)}
    -\sigma_0\right)
  \,\mathrm{d}\tau+(x-a)\sigma_0,\ x\in(a,b).
\end{equation}
In summary, the pseudocode in Algorithm~\ref{algor:asym_expand} outlines the procedure we
use for evaluation of the grid values of the asymptotic expansion
$w_{a,\varepsilon}$ of order $k$, from two given vectors of grid
values of the variable coefficients $r$ and $s$.

\begin{algorithm}
  \caption{Evaluation of the $k$-th order asymptotic expansion
    $w_{a,\varepsilon}$ for the ODE
    problem~\eqref{eq:pb-orig-div-form}--\eqref{eq:pb-orig_2-div-form}
    with variable coefficients $r$ and $s$.}
\label{algor:asym_expand}
\begin{algorithmic}
  \REQUIRE the grid values of the funcions $r,s$ in $(a,b)$ \STATE
  $\hat{r}_{0},\ldots,\hat{r}_{k}\leftarrow$\texttt{taylor\_coef}($k,r,s$)
  \COMMENT{compute the Taylor coefficients in
    equation~\eqref{eq:taylor_deriv}} \STATE
  $\hat{w}_{0},\ldots,\hat{w}_{k}\leftarrow$\texttt{ode\_solve}($\hat{r}_{0},\ldots,\hat{r}_{k}$)
  \COMMENT{compute analytically the ODE solutions \eqref{eq:edo_L0}
    and \eqref{eq:edo_Lj}} \STATE
  $y_{\Nodes}\leftarrow$\texttt{int\_fft}($1/\tilde{s}^\ell$)
  \COMMENT{integrate $1/\tilde{s}^\ell$ using a zero-mean Fourier
    series~\eqref{eq:int_fft_y}} \STATE
  $\hat{w}_{\mathrm{in}}\leftarrow$\texttt{expand}($\hat{w}_{0},\ldots,\hat{w}_{k}$)
  \COMMENT{evaluate the sums~\eqref{add_exp}} \STATE
  $w_{a,\varepsilon}\leftarrow$\texttt{compose}($\hat{w}_{\mathrm{in}},y_{\Nodes}/\varepsilon$)
  \COMMENT{compose the inner expansion with
    $Y=y_{\Nodes}/\varepsilon$} \ENSURE the grid values of
  $w_{a,\varepsilon}$
\end{algorithmic}
\end{algorithm}

\subsection{Discrete ODE operators}\label{sec:discrete-operators}
Here we introduce two operators $\tilde{A}_{\Nodes}$ and
$\tilde{B}_{\Nodes}$ associated with the ODE solvers described in the
previous sections; use of these operators facilitates the introduction
of the overall PDE solvers in Section~\ref{sec:full-disc}.  Briefly,
the first one of these is the ``ODE solution operator'' (described in
Sections~\ref{sec:fourier-discrete} through~\ref{asympt_exp}) which,
given coefficients, right-hand sides and boundary values, produces
approximate grid values $\boldsymbol{u} =
(u_{\Nodes}(x_1),\dots,u_{\Nodes}(x_\Nodes))^t$ of the solution of the
two-point BVP~\eqref{eq:pb-orig}--\eqref{eq:pb-orig_2}. The operator
$\tilde{B}_{\Nodes}$, amounts, simply, to evaluation of the ODE
differential operator, for given ODE coefficients, for a given discrete
function $\boldsymbol{w}$.

More precisely, given the equispaced grid points
$x_{1},\ldots,x_{\Nodes}$ in the domain $(a,b)$ of the two-point
boundary value problem \eqref{eq:pb-orig}--\eqref{eq:pb-orig_2} and
the grid values
$\boldsymbol{p}=(p_{1},\ldots,p_{\Nodes})^{t}\in\mathbb{R}^{\Nodes}$
and
$\boldsymbol{q}=(q_{1},\ldots,q_{\Nodes})^{t}\in\mathbb{R}^{\Nodes}$,
of the ODE variable coefficients $p$ and $q$, the linear operator
(matrix)
$\tilde{A}_{\Nodes}=\tilde{A}_{\Nodes}(\boldsymbol{p},\boldsymbol{q})$
maps the grid values
$\boldsymbol{f}=(f_{1},\ldots,f_{\Nodes})^{t}\in\mathbb{R}^{\Nodes}$
of the right-hand side $f$ and the Dirichlet data
$\boldsymbol{d}=(d_{a},d_{b})^t\in\mathbb{R}^2$ into the approximate
point values
$\boldsymbol{u}=(u_{\Nodes}(x_{1}),\ldots,u_{\Nodes}(x_{\Nodes}))^{t}$
of the ODE solution as defined in Sections~\ref{sec:fourier-discrete}
through~\ref{asympt_exp}:
\begin{equation}\label{eq:def_A_N}
  (\boldsymbol{f},\boldsymbol{p},\boldsymbol{q},\boldsymbol{d})
  \in\mathbb{R}^{3\Nodes+2}\mapsto 
  \boldsymbol{u}=
  \tilde{A}_{\Nodes}(\boldsymbol{p},\boldsymbol{q})
\begin{pmatrix}
\boldsymbol{f}\\
\boldsymbol{d}
\end{pmatrix}
\in\mathbb{R}^{\Nodes}.
\end{equation}
Analogously, the matrix
$\tilde{B}_{\Nodes}=\tilde{B}_{\Nodes}(\boldsymbol{p},\boldsymbol{q})$
maps the grid values
$\boldsymbol{w}=(w_{1},\ldots,w_{\Nodes})^{t}\in\mathbb{R}^{\Nodes}$
into the grid point values
$\boldsymbol{g}=(g_{1},\ldots,g_{\Nodes})^t$ resulting from evaluation
of the periodic ODE operator on the left-hand side of~\eqref{per_ext}
on $\boldsymbol{w}$, that is to say,
\begin{equation}\label{eq:def_B_N}
  (\boldsymbol{w},\boldsymbol{p},\boldsymbol{q})
  \in\mathbb{R}^{3\Nodes}\mapsto \boldsymbol{g}=
  \tilde{B}_{\Nodes}(\boldsymbol{p},\boldsymbol{q})\; \boldsymbol{w}
  \in\mathbb{R}^{\Nodes},
\end{equation}
where, letting $w\in\mathsf{B}(\tildea,\tildeb)$ denote the scaled
FC(Gram) continuation obtained from the grid values $\boldsymbol{w}$, we have set
$$
g_{j}=w(x_{j})
-\tilde{p}(x_{j})\frac{dw}{dx}(x_{j})
-\tilde{q}^{\ell}(x_{j})\frac{d^{2}w}{dx^2}(x_{j})
\qquad\mbox{ for }j=1,\ldots,\Nodes .
$$

\begin{remark}\label{end_points}
  Note that, 1) As pointed out in Section~\ref{fc_gram}, the endpoints
  $a$ and $b$ are not required to belong to the Fourier-continuation
  grid $\{x_j\}_{j=1}^{\Nodes}$ (and, in fact, they are chosen not to belong to the
  spatial grid $x_{1},\ldots,x_{\Nodes}$ used in the context of the
  ODE operators considered in this section), and, 2) The ODE
  evaluation inherent in the operator $\tilde{B}_{\Nodes}$ does not
  involve the Dirichlet boundary data $d_{a}$ and $d_{b}$, since
  neither $a$ nor $b$ belong to the spatial grid used.
\end{remark}
\begin{remark}\label{AN_BN_dep_on_ab}
  Note that the operators $\tilde{A}_{\Nodes}$ and
  $\tilde{B}_{\Nodes}$ depend on the interval endpoints $a$ and $b$,
  although the notations~\eqref{eq:def_A_N}-\eqref{eq:def_B_N} do not
  make this dependence explicit. This fact is relevant in the context
  of Section~\ref{sec:full-disc}, where the endpoints $a$ and $b$
  generally change as varying Cartesian lines and corresponding ODE
  domains are used as part of an alternating-direction PDE solution
  procedure.
\end{remark}

\subsection{Boundary projections\label{bound_proj}}
In order to ensure stability and convergence, the PDE solvers
described in Section~\ref{sec:full-disc} utilize slightly modified
versions of the ODE operators $\tilde{A}_{\Nodes}$ and
$\tilde{B}_{\Nodes}$ introduced in the previous section; the resulting
modified operators, which incorporate prescriptions put forth
in~\cite{bruno10} and~\cite{lyon10}, are denoted by $A_{\Nodes}$ and
$B_{\Nodes}$. Here we sketch the projection and correction procedures
that produce $A_{\Nodes}$ and $B_{\Nodes}$ from $\tilde{A}_{\Nodes}$
and $\tilde{B}_{\Nodes}$; full details in these regards can be found
in reference~\cite{bruno10}. 

Letting $\boldsymbol{v}$ denote
$\boldsymbol{v}=\tilde{A}_{\Nodes}(\boldsymbol{p},\boldsymbol{q})(\boldsymbol{f},\boldsymbol{d})^t$
(resp.
$\boldsymbol{v}=\tilde{B}_{\Nodes}(\boldsymbol{p},\boldsymbol{q})\boldsymbol{w}$),
the vector
$A_{\Nodes}(\boldsymbol{p},\boldsymbol{q})(\boldsymbol{f},\boldsymbol{d})^t$
(resp. $B_{\Nodes}(\boldsymbol{p},\boldsymbol{q})\boldsymbol{w}$) is
defined as the result of an application to the vector $\boldsymbol{v}$
of boundary projections and corrections resulting from the following
procedure:
\begin{itemize}
\item[(a)] \label{item_a} With reference to Section~\ref{fc_gram},
  construct a vector $\boldsymbol{v}^{p}$ of length $\Nodes$ by
  replacing the $\Ndelta$ left-most and $\Ndelta$ right-most values of
  $\boldsymbol{v} = (v_1,\dots,v_\Nodes)^t$ by the corresponding
  values of certain polynomial approximants, the ``open Gram
  polynomial projections'', that result from application of the
  FC(Gram) procedure to the corresponding values
  $(v_1,\dots,v_{\Ndelta})^t$ and $(v_{\Ndelta-\Nodes
    +1},\dots,v_{\Nodes})^t$, respectively. These corrections are
  called ``open'' as they do not involve the two endpoints of the
  interval $[a,b]$; see~\cite{bruno10}.
\item[(b)] Analogously, construct a vector $\boldsymbol{v}^{b}$ of
  length $\Nodes$ by replacing the $\Ndelta$ left-most and $\Ndelta$
  right-most values of $\boldsymbol{v} = (v_1,\dots,v_\Nodes)^t$ by
  the corresponding values of certain polynomial approximants, the
  ``closed Gram polynomial projections'', that result from application
  of the FC(Gram) procedure to the $\Ndelta +1$-tuples
  $(d_a,v_1,\dots,v_{\Ndelta})^t$ and $(v_{\Ndelta-\Nodes
    +1},\dots,v_{\Nodes},d_b)^t$, respectively, where $d_a$ and $d_b$
  are user-provided values (which, in the context of the PDE solver of
  Section~\ref{sec:full-disc}, are given be the PDE boundary values on
  the corresponding horizontal or vertical ADI lines). These
  corrections are called ``closed'' as they involve the two endpoints
  of the interval $[a,b]$; see~\cite{bruno10}.
\item[(c)] Using a second-order centered finite difference scheme
  obtain a discrete solution $\boldsymbol{\nu} =
  (\nu_1,\dots,\nu_{\Nodes})^t$ of the two-point
  BVP~\eqref{eq:pb-orig}-\eqref{eq:pb-orig_2} with right-hand
  side $f-E(f)$ (see equation~\ref{cont_oper}) and construct the open
  projection $\boldsymbol{\nu}^{p}$ of $\boldsymbol{\nu}$, using the
  prescription in item~(a) above.
\item[(d)] The vector
  $A_{\Nodes}(\boldsymbol{p},\boldsymbol{q})(\boldsymbol{f},\boldsymbol{d})^t$
  (resp. $B_{\Nodes}(\boldsymbol{p},\boldsymbol{q})\boldsymbol{w}$)
  is now produced as the linear combination
  $(1-\chi)\boldsymbol{v}^{p}+\chi \boldsymbol{v}^{b} +
  \boldsymbol{\nu} -\boldsymbol{\nu}^{p}$, where
\begin{equation}\label{chi}
\chi=\min\left\{1,\frac{\Nd-2}{h^2}\displaystyle
\max_{x\in(\tildea,\tildeb)}\tilde{q}^{\ell}(x)\right\}.
\end{equation}
(The definition~\eqref{chi} is a direct generalization of a formula
put forth in~\cite{bruno10} for the corresponding constant
coefficient two-point BVP.)
\end{itemize}

\section{FC BVP solver~II: implementation and numerical results\label{ode_impl}}
This section consists of four subsections, Sections~\ref{gmres}
through~\ref{ODE_cost}, which discuss, in turn, 1)~A method of
solution of the linear systems introduced in
Sections~\ref{sec:fourier-discrete} and~\ref{sec:equiv-for} via
preconditioned GMRES; 2) Selection of parameters needed for the scaled
FC(Gram) algorithm described in Section~\ref{scaled_fc_gram}; 3)~The
accuracy inherent in the resulting scaled FC BVP solver; and, 4)~The
computational cost of the ODE solution algorithm with parameters as in
point~3).

\subsection{Periodic ODE solution via preconditioned
  GMRES\label{gmres}}
\label{sec:solver-implementation}
The discretization~(\ref{eq:ode-lin-sys}) of the periodic variable
coefficient ODE~\eqref{per_ext} gives rise to the {\em full} linear
system of equations
\begin{equation}\label{lin_sys_B}
  \tilde{B}_{\Nodes}(\boldsymbol{p},\boldsymbol{q})\boldsymbol{w}
  =\boldsymbol{g}
\end{equation}
associated with the matrix
$\tilde{B}_{\Nodes}(\boldsymbol{p},\boldsymbol{q})$ defined in
equation~\eqref{eq:def_B_N}.  The solution of such linear systems can
be obtained efficiently by relying on the iterative linear algebra
solver GMRES~\cite{walker88}, on the basis of 1)~FFT-based evaluation
of forward maps given by the matrix
$\tilde{B}_{\Nodes}(\boldsymbol{p},\boldsymbol{q})$, and 2)~Use of a
finite-difference based preconditioner---as described in what follows.

The pseudocode in Algorithm~\ref{algor:ode_eval} outlines our
algorithm for evaluation of the operator
$\tilde{B}_{\Nodes}(\boldsymbol{p},\boldsymbol{q})\boldsymbol{w}$ for
a given vector $\boldsymbol{w}$ of grid values. Assuming that the
point values $\boldsymbol{p}$ and $\boldsymbol{q}$ of the variable
coefficients have been obtained a priori, the ODE evaluation involves
an application of the scaled FC(Gram) algorithm to $\boldsymbol{w}$
followed by computation of the point values of its first and second
derivatives and multiplication by the variable coefficients. In order
to mitigate aliasing effects in evaluation of products, oversampling
by a factor of two (obtained from an adequately zero-padded FFTs) is
used in the precomputed coefficients $\boldsymbol{p}$ and
$\boldsymbol{q}$ as well as the function values and derivatives
arising from the vector $\boldsymbol{w}$.
\begin{algorithm}
  \caption{Evaluation of the function
    $\boldsymbol{g}=\tilde{B}_{\Nodes}(\boldsymbol{p},\boldsymbol{q})\boldsymbol{w}$:
    application of the ODE operator for a vector $\boldsymbol{w}$ of
    function grid values}
\label{algor:ode_eval}
\begin{algorithmic}
  \REQUIRE the function grid values $\boldsymbol{w}$ \STATE
  $\tilde{\boldsymbol{w}}\leftarrow$\texttt{scaled\_fcgram}($\boldsymbol{w}$)
  \COMMENT{apply the scaled FC(Gram) procedure} \STATE
  $\hat{\tilde{\boldsymbol{w}}}\leftarrow$\texttt{fft}($\tilde{\boldsymbol{w}}$)
  \COMMENT{apply the Fourier transform} \STATE
  $\hat{\boldsymbol{g}}\leftarrow$\texttt{oversample}($\hat{\tilde{\boldsymbol{w}}}$)
  \COMMENT{oversample the Fourier series (by zero-padding) by a factor
    of two} \STATE $(\hat{\boldsymbol{g}}',\hat{\boldsymbol{g}}'')
  \leftarrow$\texttt{fourier\_diff}($\hat{\boldsymbol{g}}$)
  \COMMENT{compute Fourier space derivatives} \STATE
  $(\boldsymbol{g}',\boldsymbol{g}'')
  \leftarrow$\texttt{ifft}($\hat{\boldsymbol{g}}',\hat{\boldsymbol{g}}''$)
  \COMMENT{evaluate derivatives in physical space} \STATE
  $\boldsymbol{g}\leftarrow
  \boldsymbol{g}-\boldsymbol{p}\boldsymbol{g}'-\boldsymbol{q}\boldsymbol{g}''$
  \COMMENT{evaluate the ODE operator} \ENSURE the grid values of
  $\boldsymbol{g}$
\end{algorithmic}
\end{algorithm}

As mentioned above, our algorithm obtains the solutions of the linear
systems~\eqref{lin_sys_B} by means of a {\em preconditioned} GMRES
solver. (A direct application of the un-preconditioned GMRES algorithm
to these linear systems requires extremely large numbers of iterations
to meet even modest accuracy tolerances: in the case of the two-point
BVP considered in Figure~\ref{fig:GMRES-iter}, for example, the
numbers of iterations required to obtain a residual of $10^{-10}$ for
$N=100$, $1000$ and $3000$ are 145, 1279 and 3799, respectively.)  The
preconditioners used, which are mentioned in point 2) above, are given
by inverses of the matrices corresponding to (possibly oversampled)
second-order finite difference (FD) approximations of the periodic ODE
problem~\eqref{per_ext} (cf. reference~\cite{Sun96} and associated
comments in Section~\ref{intro}).  Since the ODE variable coefficients
and the right-hand side in \eqref{eq:pb-orig} are accurately
represented by their scaled FC(Gram) continuations, a zero-padding
procedure can be used to evaluate the ODE coefficients and right-hand
side on a spatial grid that is finer, by a certain factor $\Nover$,
than the one used in the original Fourier collocation
discretization~(\ref{eq:ode-lin-sys}), thus easily leading to an
oversampled preconditioner.  With limited impact on the computing cost
of the algorithm (see Section~\ref{ODE_cost}), a fine grid FD
discretization can improve significantly, by a factor of $1/\Nover^2$,
the accuracy of the preconditioner---and thus, as demonstrated in
Figure~\ref{fig:GMRES-iter}, its preconditioning capability.

\begin{figure}[!ht] 
\begin{center} 
$\begin{array}{cc}
\includegraphics[width=0.475\textwidth,angle=0]
{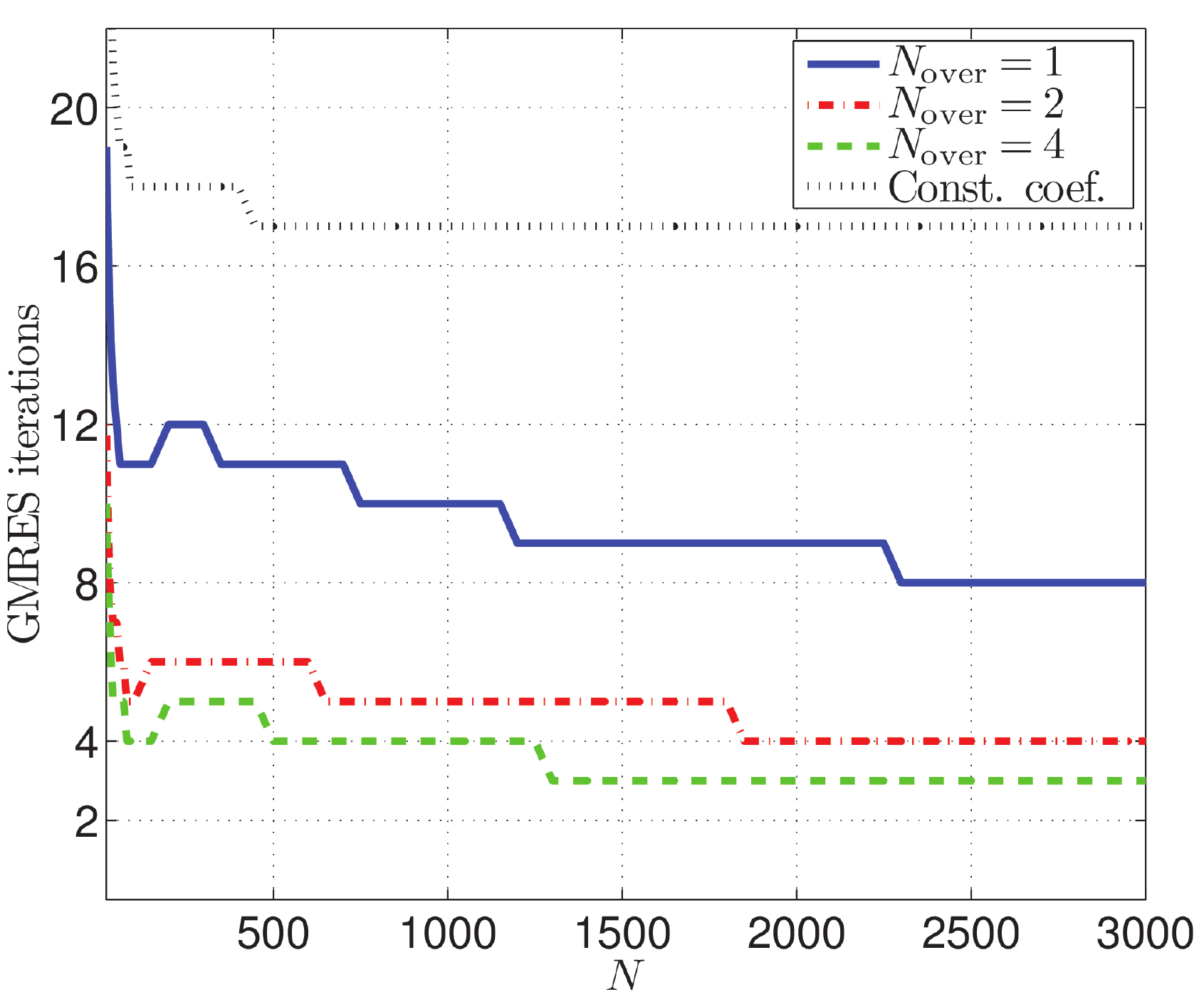} & 
\includegraphics[width=0.475\textwidth,angle=0]
{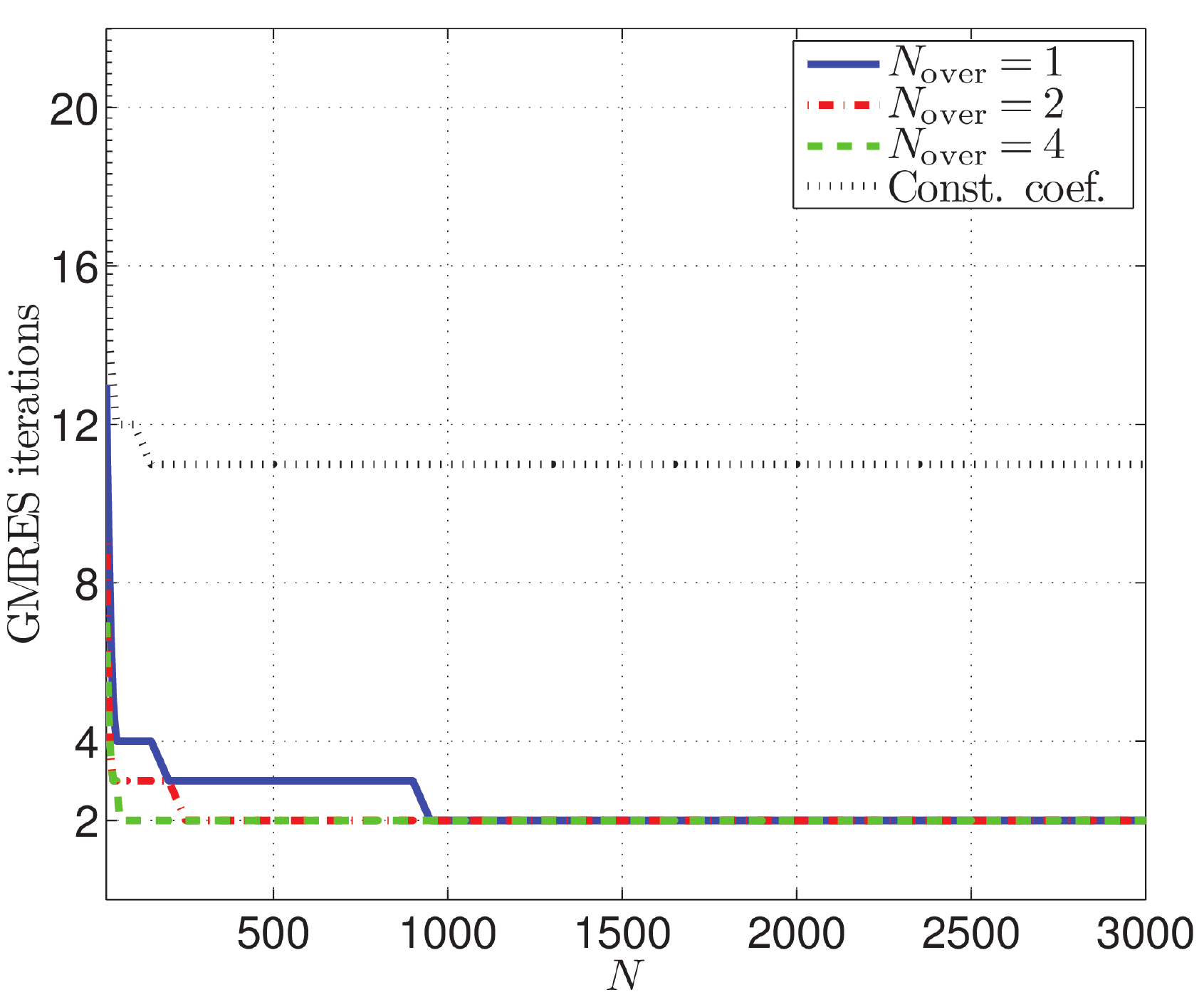}
\end{array}$ 
\caption{Number of GMRES iterations required for the solution of the
  linear system~(\ref{eq:ode-lin-sys}) as a function of the number
  $\Nodes$ of grid points in the one-dimensional mesh, using the GMRES
  residual tolerances $tol_{\mathrm{GMRES}}=10^{-15}$ (left) and
  $10^{-10}$ (right), for several finite-difference preconditioning
  oversampling numbers $\Nover$ and a for a simple preconditioner
  based on a constant-coefficient approximation.}
\label{fig:GMRES-iter}
\end{center} 
\end{figure}

Figure~\ref{fig:GMRES-iter} displays the number of iterations required
by the GMRES-based preconditioned iterative ODE solver described above
in this section to reach a residual tolerance $tol_{\mathrm{GMRES}}$,
for various values of the preconditioner oversampling parameter
$\Nover$. For these examples a two-point boundary value problem
\eqref{eq:pb-orig}--\eqref{eq:pb-orig_2} was considered in the
interval $(a,b)=(0,1)$ with coefficients given by $p(x)=24x/(1+4x^2)$
and $q(x)=(1+8x^3)/(1+4x^2)$, and with right-hand side and the
Dirichlet boundary values such that the function $u(x)=\cos(x^2+2)$ is
an exact solution, and the FC parameters $\Nd=\lfloor
26(1+(\Nodes-21)/100)\rfloor$ (see Section~\ref{param_selc}),
$\Ndelta=10$ and $\Ndeg=5$ were used.

These images demonstrate the GMRES convergence character of the
preconditioned FC-based periodic ODE solver: the iteration numbers
remain bounded independently of the grid size, and they decrease as
more accurate preconditioners are used. For comparison purposes,
results obtained through preconditioning by means of the constant
coefficient ODE solver~\cite{bruno10} (with constant coefficients
equal to the average of the given variable coefficients) are shown,
the corresponding results are marked as ``Const. coef.'' in
Figure~\ref{fig:GMRES-iter}. Clearly, in both plots, the
constant-coefficient preconditioner is less efficient than the
oversampled second-order FD preconditioners.

\subsection{Parameter selection for the scaled FC(Gram) algorithm of
  Section~\ref{scaled_fc_gram}\label{param_selc}}
The exterior-source procedure for enforcement of boundary conditions
introduced in Section~\ref{sec:equiv-for}, requires evaluation of two
right-hand sides, $g_{a},\,g_{b}\in\mathsf{B}(\tildea,\tildeb)$, with
support outside the interval $(a,b)$ (see
Section~\ref{sec:equiv-for}).  Plots in
Figure~\ref{fig:FC-gram-scaled} show our selections for the functions
$g_{a}$ and $g_{b}$ with $\Nd=26$ (left) and $\Nd=260$ (right), taking
in both cases an $\Nodes=1000$ point discretization of the interval
$(a,b)$.
\begin{figure}[!hb] 
\begin{center} 
$\begin{array}{cc}
\includegraphics[width=0.467\textwidth,angle=0]{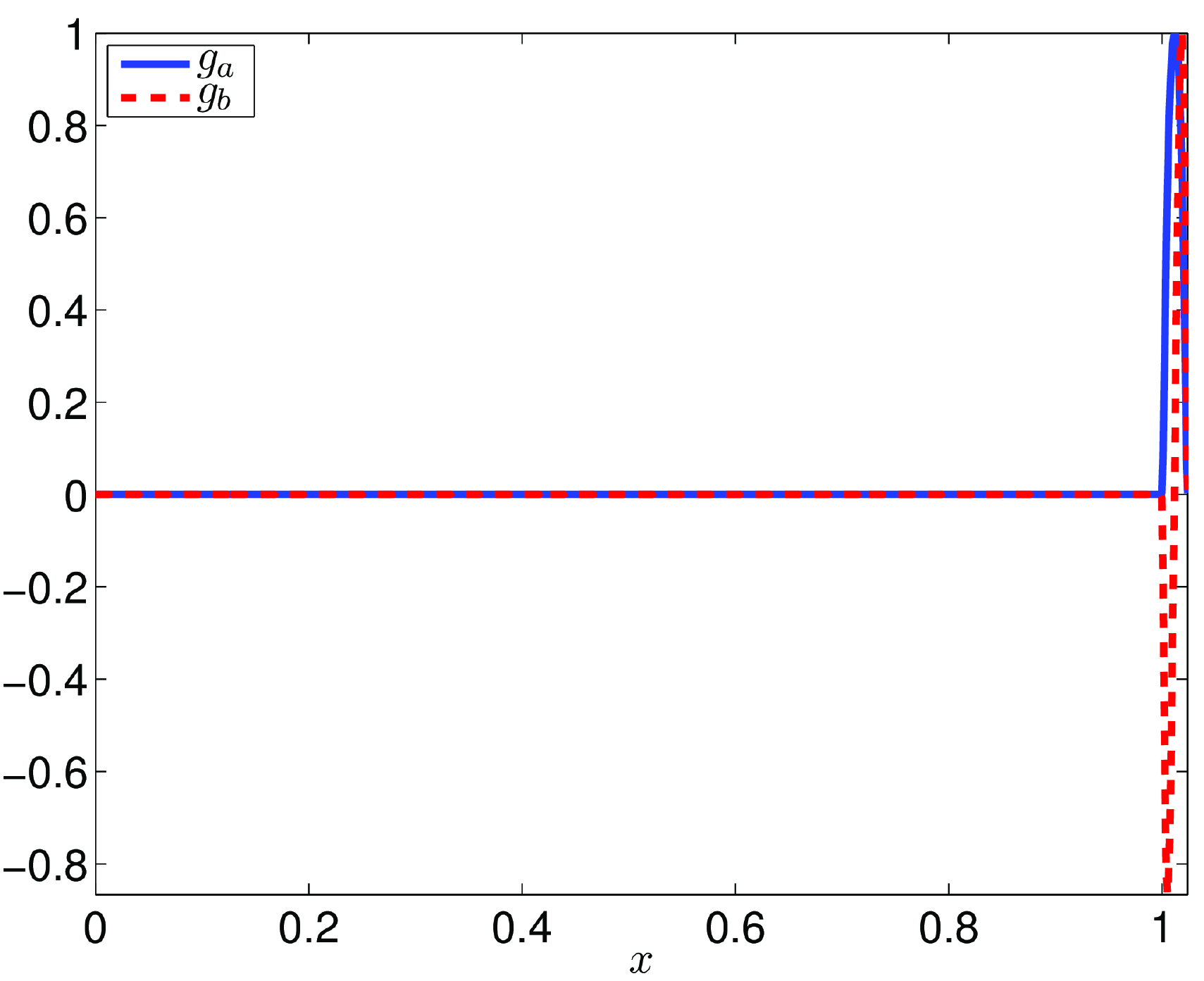}  
\includegraphics[width=0.493\textwidth,angle=0]{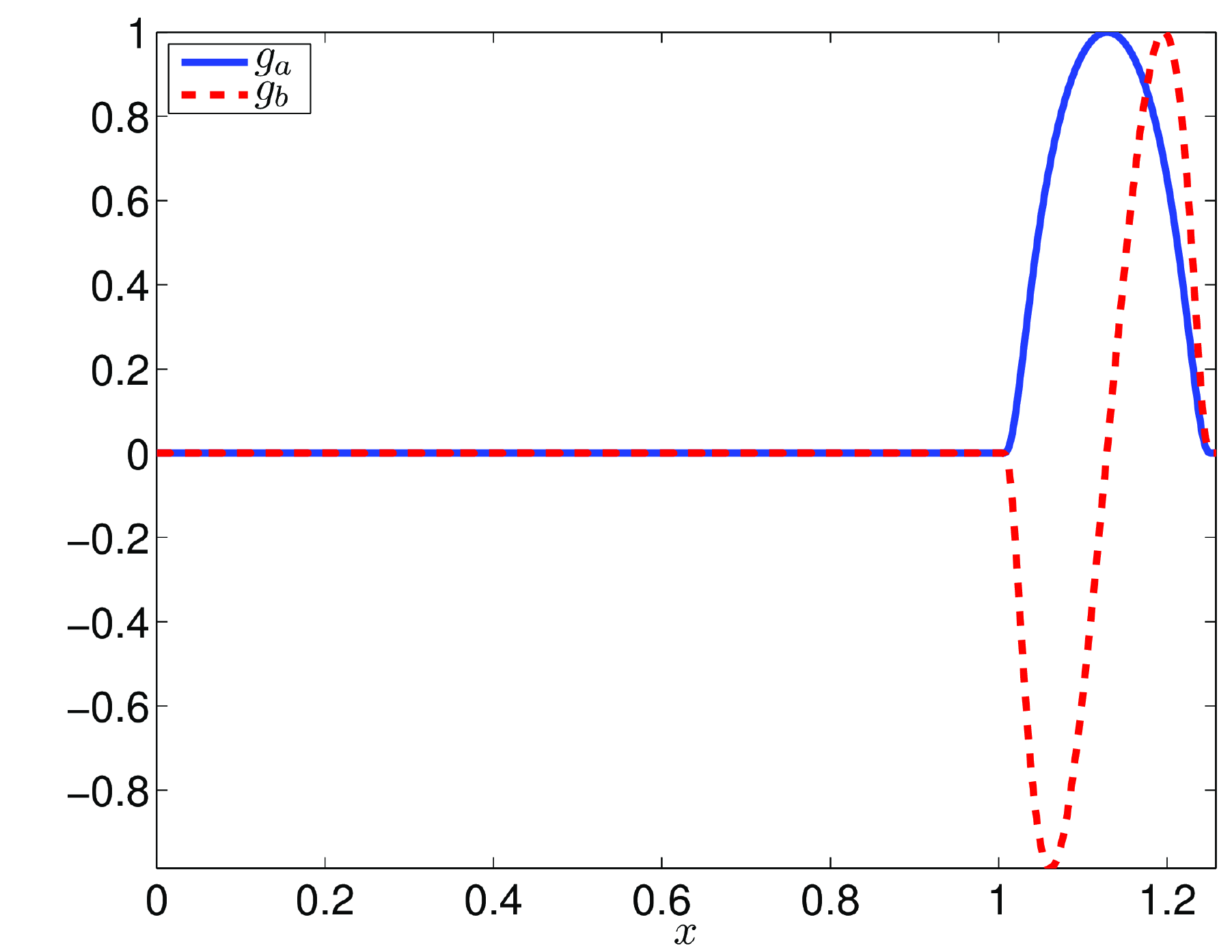} 
\end{array}$ 
\caption{Auxiliary right-hand sides $g_{a}$, $g_{b}$ used in the
exterior-source procedure for $\Nd=26$ (left) and $\Nd=260$ (right).}
\label{fig:FC-gram-scaled}
\end{center} 
\end{figure}
As demonstrated on the left portion of
Figure~\ref{fig:FC-gram-scaled}, if the number $\Nd$ of grid points
outside the interval $(a,b)$ is fixed, then the size of the support of
the functions $g_{a}$ and $g_{b}$ tends to zero and, thus, although
the maximum norm of these functions equal one for all
values of $\Nd$, both functions tend to zero in the
root-mean-square norm as $\Nodes\to\infty$.  Thus, for fixed $\Nd$,
the solutions $u_a$ and $u_b$ introduced in
Section~\ref{sec:equiv-for}, and thus the matrices of the
systems~\eqref{eq:linear-sys-bc-dis-1}, tend to zero as
$\Nodes\to\infty$.  Clearly, then, the
exterior-source method put forth in Section~\ref{sec:equiv-for}
requires that $\Nd\to \infty$ at least linearly with $\Nodes$ as
$\Nodes\to\infty$.

\begin{figure}[!hb] 
\begin{center} 
$\begin{array}{lr}
\includegraphics[width=0.475\textwidth,angle=0]{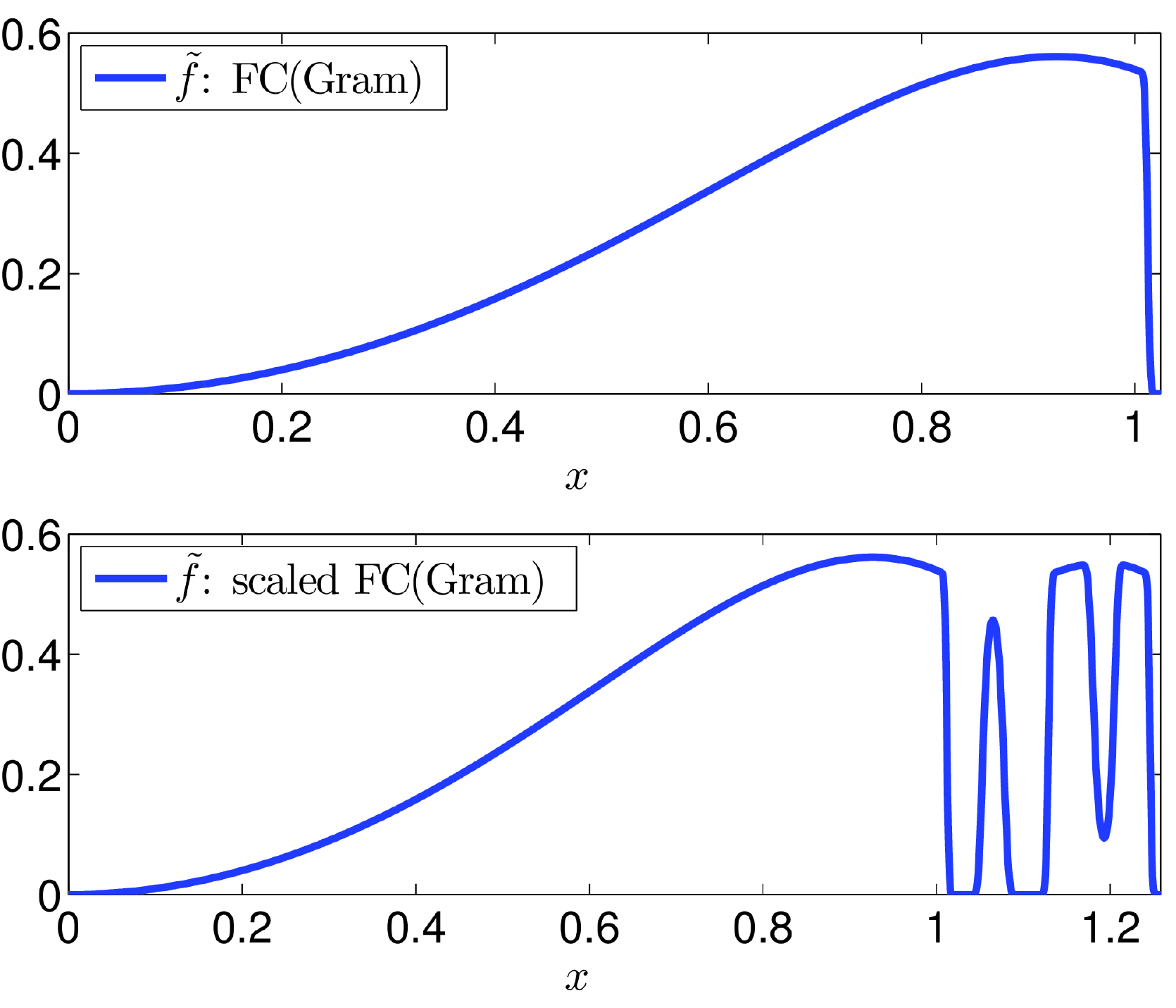} & 
\includegraphics[width=0.48\textwidth,angle=0]{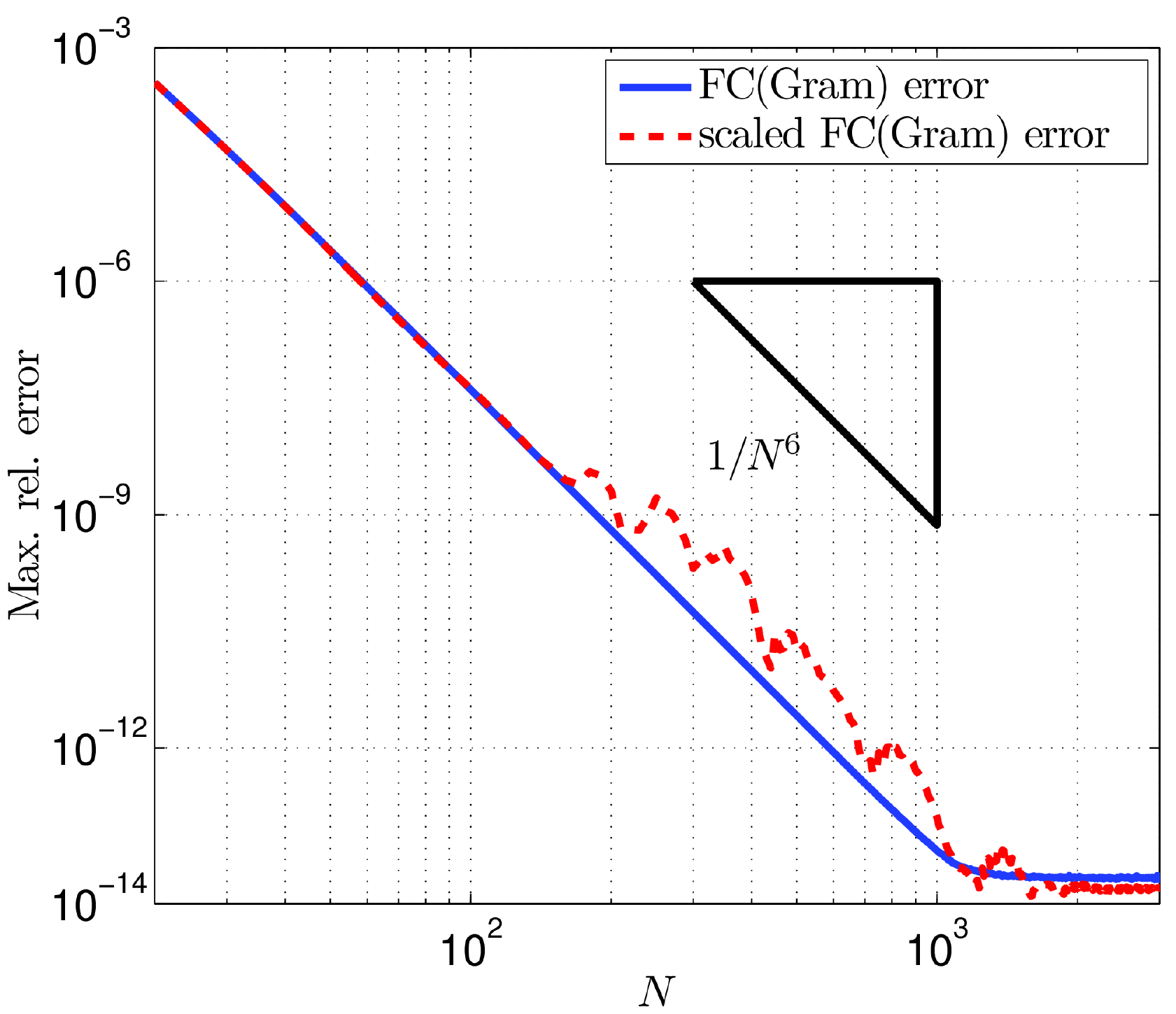}  
\end{array}$ 
\caption{Fourier continuation of the function $f(x)=x^2\cos(x^2)$ in
  the interval $(0,1)$ as produced by the original FC(Gram) method
  with $\Nd=26$ (upper-left), its scaled version using $\Nd=260$
  (lower-left), and the corresponding maximum relative errors
  (relative to the maximum value of the original function) as a
  function of $\Nodes$ (right). As demonstrated in
  Figure~\ref{fig:ODE-error}, the scaled algorithm is an essential
  element of the two-point boundary value solver introduced in
  Section~\ref{sec:ODE-theory}.}
\label{fig:bc-comparison-scaled}
\end{center} 
\end{figure}

The scaled FC(Gram) method introduced in Section~\ref{scaled_fc_gram}
was designed precisely to allow for evaluation of Fourier continuation
functions within a context that includes use of variable values of the
parameter $\Nd$, while still allowing for re-use of the basic matching
functions $\left\{f^r_\mathrm{even} \right\}_{r=0}^{\Ndeg}$ and
$\left\{f^r_\mathrm{odd}\right\}_{r=0}^{\Ndeg}$ defined in
Section~\ref{fc_gram}. The upper and lower left portions of
Figure~\ref{fig:bc-comparison-scaled} display the un-scaled and scaled
continuations of the function $f(x) = x^2\cos(x^2)$ using
\begin{equation}\label{floor}
\Nd=\lfloor 26(1+(\Nodes-21)/100)\rfloor,
\end{equation} 
where $\lfloor s \rfloor$ denote the largest integer less than or
equal to $s$.  The right portion of
Figure~\ref{fig:bc-comparison-scaled} demonstrates the fact that,
provided values of $\Nd$ proportional to $\Nodes$ are used (as is done
here, according to equation~\eqref{floor}), the relative error of the
scaled FC continuations (see Remark~\ref{error_eval}) decays at the
same rate as its non-scaled counterpart as $\Nodes$ tend to
infinity.  Although the scaled version is somewhat less accurate than
the original FC(Gram) continuations proposed in~\cite{bruno10}, the
scaling to the interval $[1,1+g]$ implicit in equation~\eqref{floor}
(see Section~\ref{scaled_fc_gram}) can be implemented through use of a
small set of precomputed scaled matching functions.
\begin{remark}\label{error_eval}
  Throughout the present Section~\ref{ode_impl}, the relative error of
  scaled and unscaled FC(Gram) continuations of given functions has
  been evaluated as the difference between the corresponding FC
  approximation and the corresponding exact function values over an
  equispaced grid ten times finer than the grid used in the FC
  procedure. (We have checked that, for the functions under
  consideration, use of higher grid refinement rates leads to
  essentially unchanged error estimates.) Clearly it is necessary to
  use such oversampling, as, although not exactly interpolatory, the
  FC procedure does arise from a least-squares approximation of
  function values (at $\Ndelta$ collocation points next to each one of
  the interval endpoints), and, thus, evaluation of errors solely at
  collocation points tend to produce error under-estimates. It is
  reasonable to expect (and we have verified in practice) that this
  difficulty does not arise when FC solutions of ODEs are concerned,
  since the FC BVP solution procedure does not seek to minimize error
  in function values at the collocation points. In spite of this fact,
  and for consistency, the solution errors presented in
  Figures~\ref{fig:ODE-error} and~\ref{fig:accuracy-grid} were
  evaluated on a grid resulting from ten-fold refinement.
\end{remark}
\begin{remark} The right-hand image in
  Figure~\ref{fig:bc-comparison-scaled} demonstrates that the order of
  convergence of the scaled FC(Gram) method is consistent with the
  order obtained by the original un-scaled procedure, in this case
  $1/\Nodes^6$, and that both algorithms reach round-off machine error
  levels for approximately the same values of $\Nodes$. However, it is
  clear from this figure that the convergence of the scaled FC(Gram)
  algorithm is somewhat more irregular than that of the unscaled
  procedure. The fluctuating behavior observed in
  Figure~\ref{fig:bc-comparison-scaled} and accompanying accuracy loss
  (that amounts to as much as one digit for some values of $\Nodes$ in
  this example, but does not otherwise detract from the overall
  convergence rate), can be traced to the fact that the scaled
  algorithm incorporates the
  composition~\eqref{scaled_f_match}--\eqref{xi_def} and, thus,
  associated frequency content, as well as the discontinuous
  selection~\eqref{floor} of the number $\Nd$ of continuation points
  as a function of $\Nodes$.
\end{remark}

\subsection{Accuracy of FC BVP solver: scaling and asymptotics}
\label{sec:accuracy-solver}
\subsubsection{Scaling and convergence of the exterior-source FC BVP
  solver \label{scale_conv}}
This section demonstrates the effectiveness of the scaling procedure
described in Section~\ref{scaled_fc_gram} with parameters as indicated
in Sections~\ref{gmres} and~\ref{param_selc}; for the purposes of this
demonstration the two-point boundary value problem introduced in
Section~\ref{gmres} is considered.  The left portion of
Figure~\ref{fig:ODE-error} displays in a solid blue line the maximum
ODE solution error (relative to the maximum value of the ODE solution)
based on use of the un-scaled FC(Gram) algorithm with a continuation
region containing a fixed number $\Nd=26$ of discretization
points. For comparison purposes, the left-hand figure also displays
the corresponding maximum relative error arising in the Fourier
continuation of the ODE right-hand side using the same continuation
region. Clearly, while use of a continuation region that does not grow
with $\Nodes$ does not affect the accuracy of the the FC(Gram)
procedure, it does affects greatly the accuracy of the ODE solver:
convergence beyond rather small values of $N$ is not observed for the
un-scaled ODE solution.  However, as demonstrated in the right portion
of Figure~\ref{fig:ODE-error}, use of the scaled FC(Gram) method
(with, e.g., $\Nd$ given by equation~\eqref{floor}), restores
convergence to machine precision.  Notice that the error in the ODE
solver is a quantity of order $(1/\Nodes)^{\Ndeg + 1}$, where $\Ndeg$
is the maximum polynomial degree in the Gram polynomial basis used for
the scaled FC(Gram) procedure.

\begin{figure}[!ht] 
\begin{center} 
$\begin{array}{cc}
\includegraphics[width=0.475\textwidth,angle=0]
{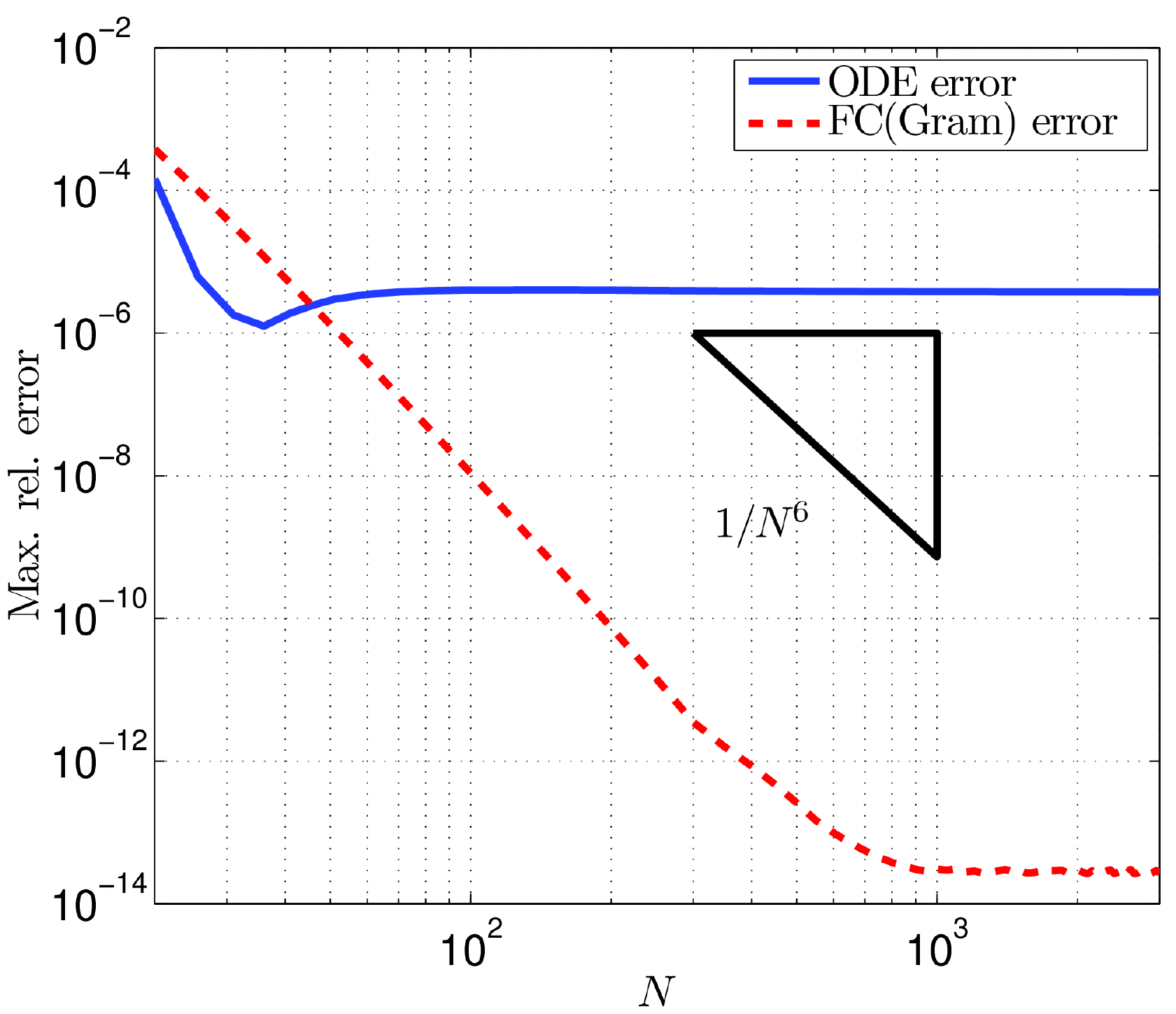} & 
\includegraphics[width=0.475\textwidth,angle=0]
{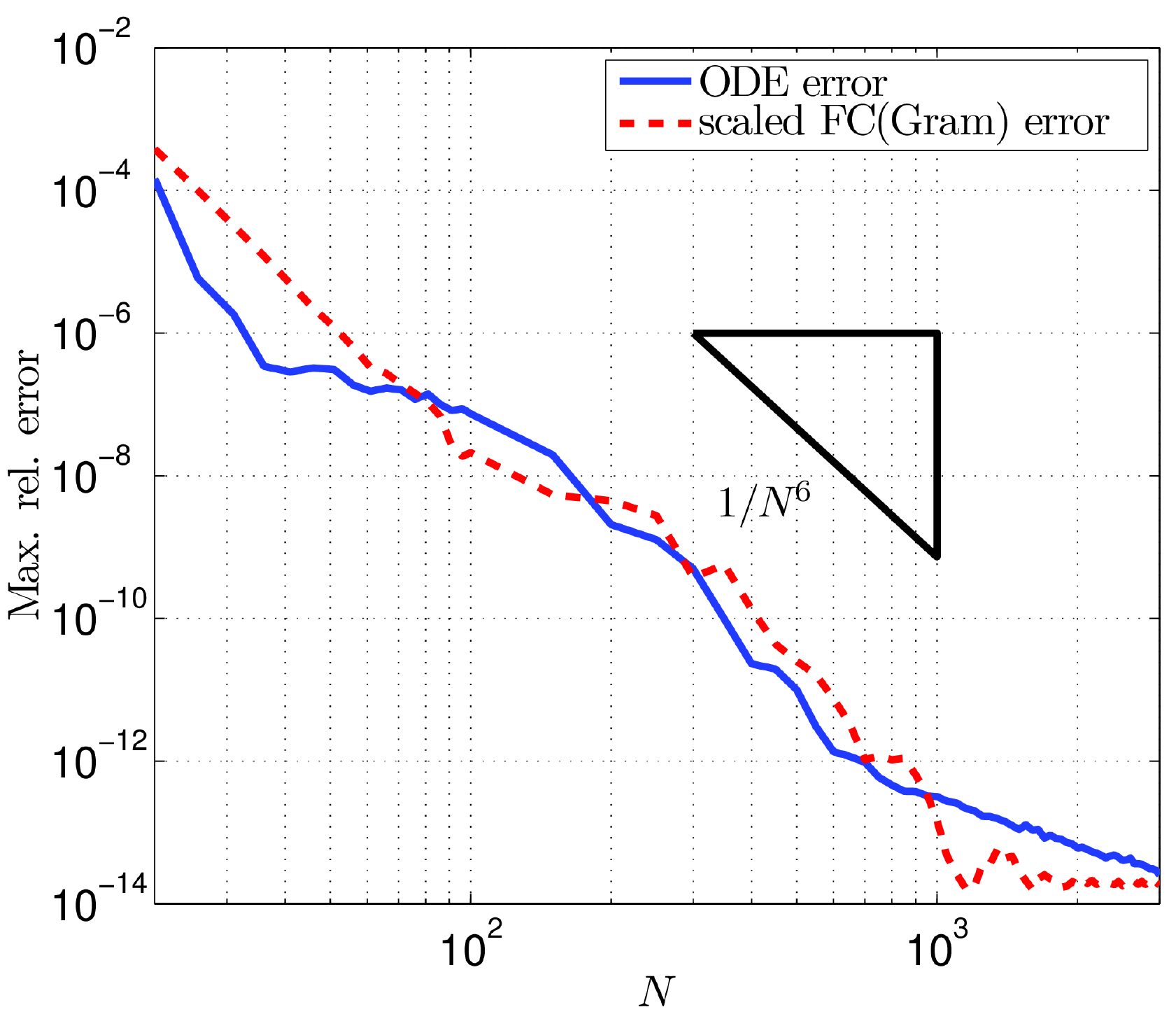} 
\end{array}$ 
\caption{Maximum error (relative to the solution maximum) in the
  FC(Gram) approximation of the ODE right-hand side (denoted by
  ``FC(Gram) error'' in the figure) and corresponding FC ODE solution
  error for the two-point boundary value problem introduced in
  Section~\ref{gmres}.  Left: un-scaled procedure. Right: scaled
  version.}
\label{fig:ODE-error}
\end{center} 
\end{figure}

\begin{figure}[!ht] 
\begin{center}
$\begin{array}{cc}
\includegraphics[width=0.475\textwidth,angle=0]
{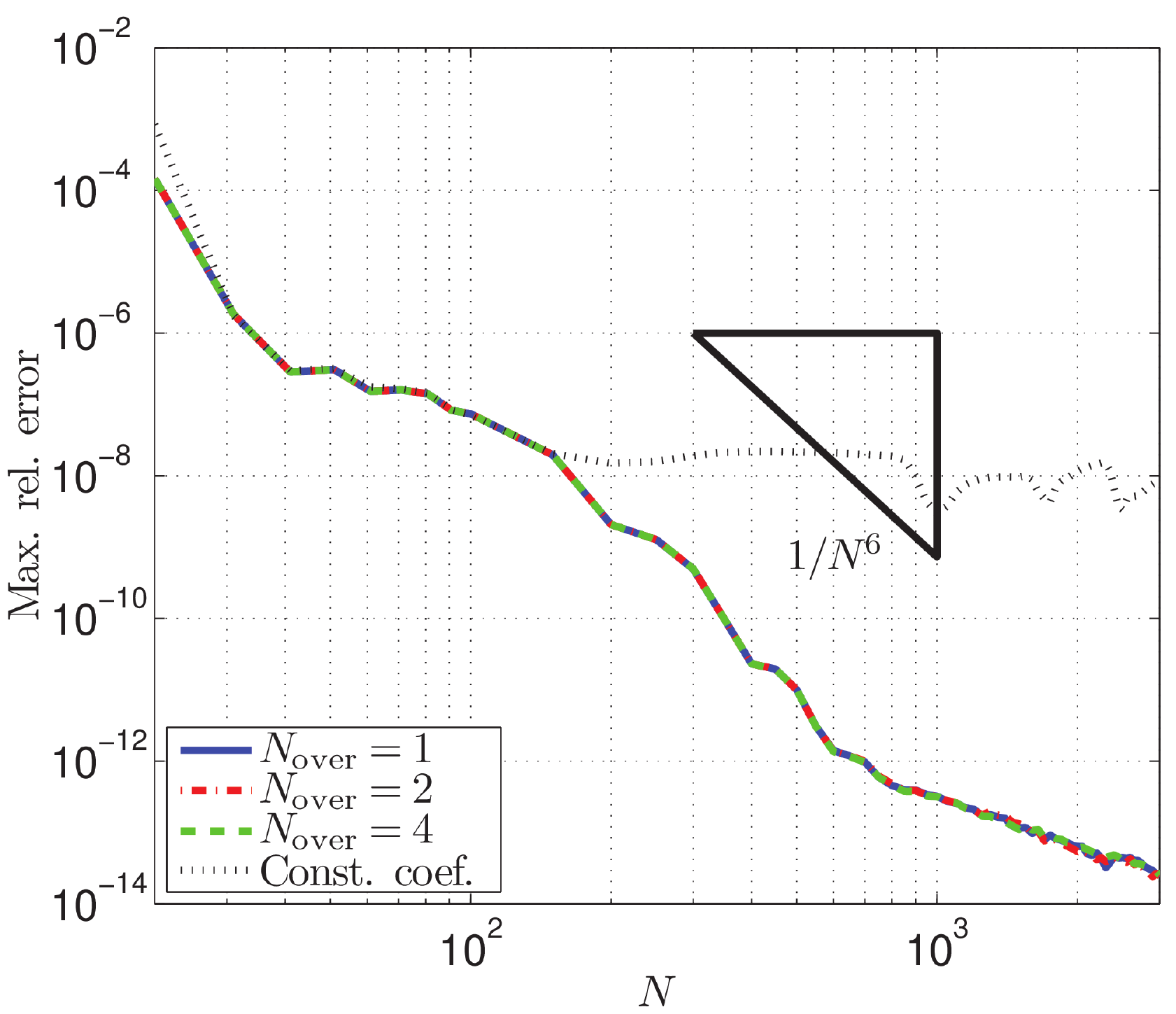} &
\includegraphics[width=0.475\textwidth,angle=0]
{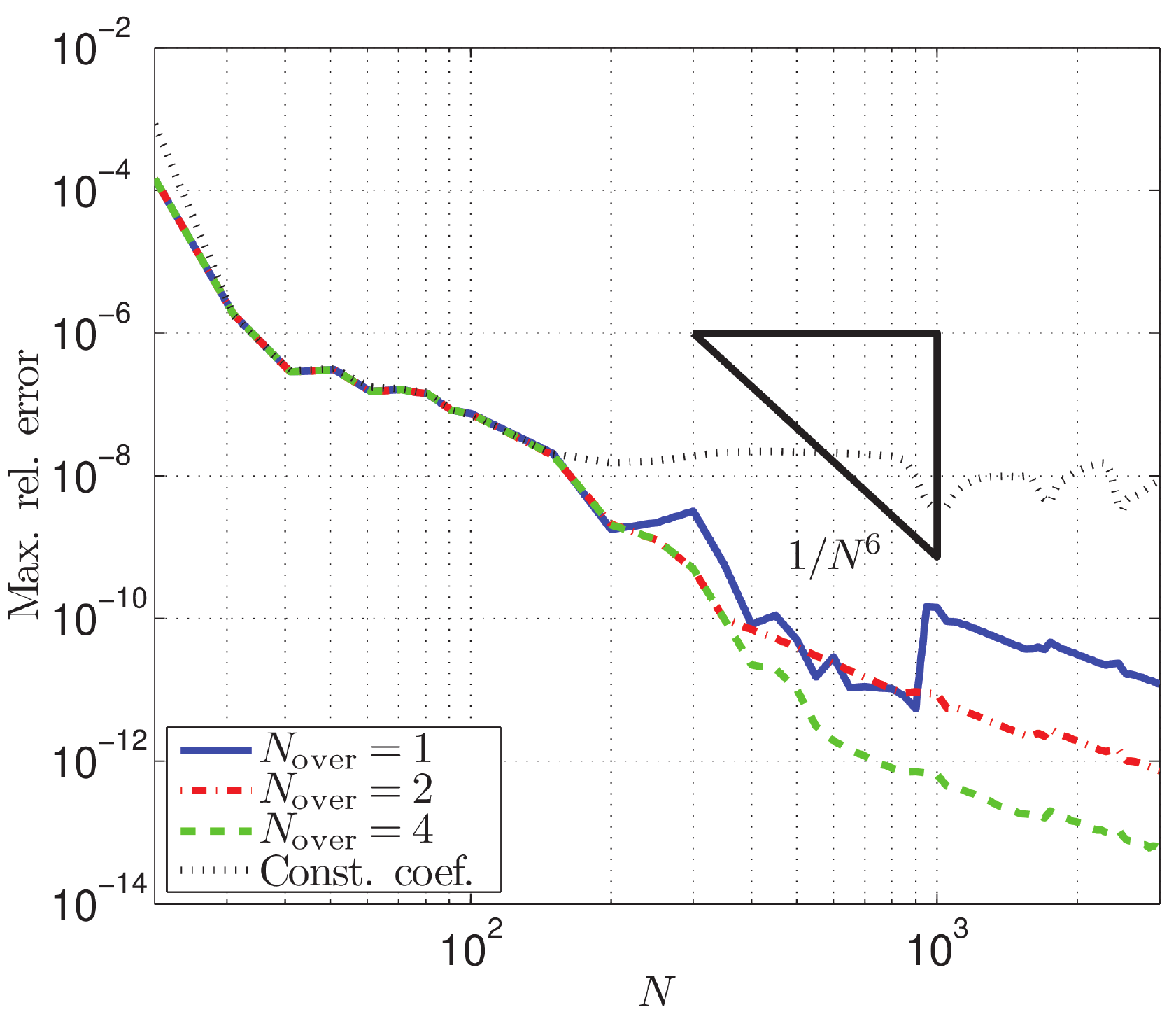} 
\end{array}$ 
\caption{Maximum error (relative to the solution maximum) in the FC
  solution of the two-point boundary value problem considered in
  Section~\ref{scale_conv}, as a function of the number of
  discretization points used, for two values of the GMRES residual
  tolerance: $tol_{\mathrm{GMRES}}=10^{-15}$ (left) and $10^{-10}$
  (right).}
\label{fig:accuracy-grid}
\end{center}  
\end{figure}

Figure~\ref{fig:accuracy-grid} demonstrates the dependence of the
numerical accuracy on the preconditioner used. For $\Nodes\leq 700$
all the preconditioners considered give rise to similar performance,
but larger value of the oversampling parameter $\Nover$ do give rise
to improved accuracies for the $10^{-10}$ GMRES tolerance and for 
$\Nodes \geq 1000$.  

\subsubsection{Convergence for stiff problems: exterior-sources and
  asymptotic-matching\label{stiff}}

This section demonstrates that an adequate combination of the scaling
and asymptotic methods described in Sections~\ref{sec:equiv-for}
and~\ref{asympt_exp} gives rise to a robust and accurate solver for
{\em arbitrarily stiff} two-point boundary value problems under
consideration (see Remark~\ref{eps_delta_t}).  For this demonstration
we consider the variable-coefficient two-point 
BVP~\eqref{eq:pb-orig}--\eqref{eq:pb-orig_2} in the interval
$(0,1)$ with $f=0$ and with
$$
p(x)=\varepsilon^2 \frac{2(1+2x)}{0.5\log(1+2x)+1}\quad\mbox{and}
\qquad q(x)=\varepsilon^2 \frac{(1+2x)^2}{0.5\log(1+2x)+1}.
$$
The exact solution of this problems is given by
$$
u(x)=C_{1}(\varepsilon)\mathrm{Ai}\left(
\varepsilon^{-\frac{2}{3}}\left(\frac{1}{2}\log(2x+1)+1\right)
\right)
+C_{2}(\varepsilon)\mathrm{Bi}\left(
\varepsilon^{-\frac{2}{3}}\left(\frac{1}{2}\log(2x+1)+1\right)
\right),
$$
where $\mathrm{Ai}$ and $\mathrm{Bi}$ are the Airy functions of first
and second kind, respectively, and where $C_{1}(\varepsilon)$ and
$C_{2}(\varepsilon)$ are chosen in such a way that $u(0)=1$ and
$u(1)=0$. The solution $u$ has a boundary layer at the endpoint $x=0$
for small values of $\varepsilon$.

The left portion of Figure~\ref{fig:ODE-boundary-layer} displays the
boundary layer solution $u(x)$ for three values of $\varepsilon$.  The
right image in the same figure presents the maximum relative errors
that result from use of the exterior-source procedure
(Section~\ref{sec:equiv-for}) and asymptotic-matching expansions
(Section~\ref{asympt_exp}) of orders $k=1$, $2$ and $3$.  As expected,
the exterior-source and asymptotic-matching procedures are both
accurate within their intended realms of applicability ($\varepsilon$
bounded away from zero and $\varepsilon\to 0$, respectively), but they
both break down otherwise. Hence, our algorithmic strategy, which
relies on one procedure or the other, depending on the value of
$\varepsilon$, results in accurate approximations for all values of
$\varepsilon>0$.

\begin{figure}[!htb]
\begin{center}
$\begin{array}{cc}
\includegraphics[width=0.480\textwidth,angle=0]
{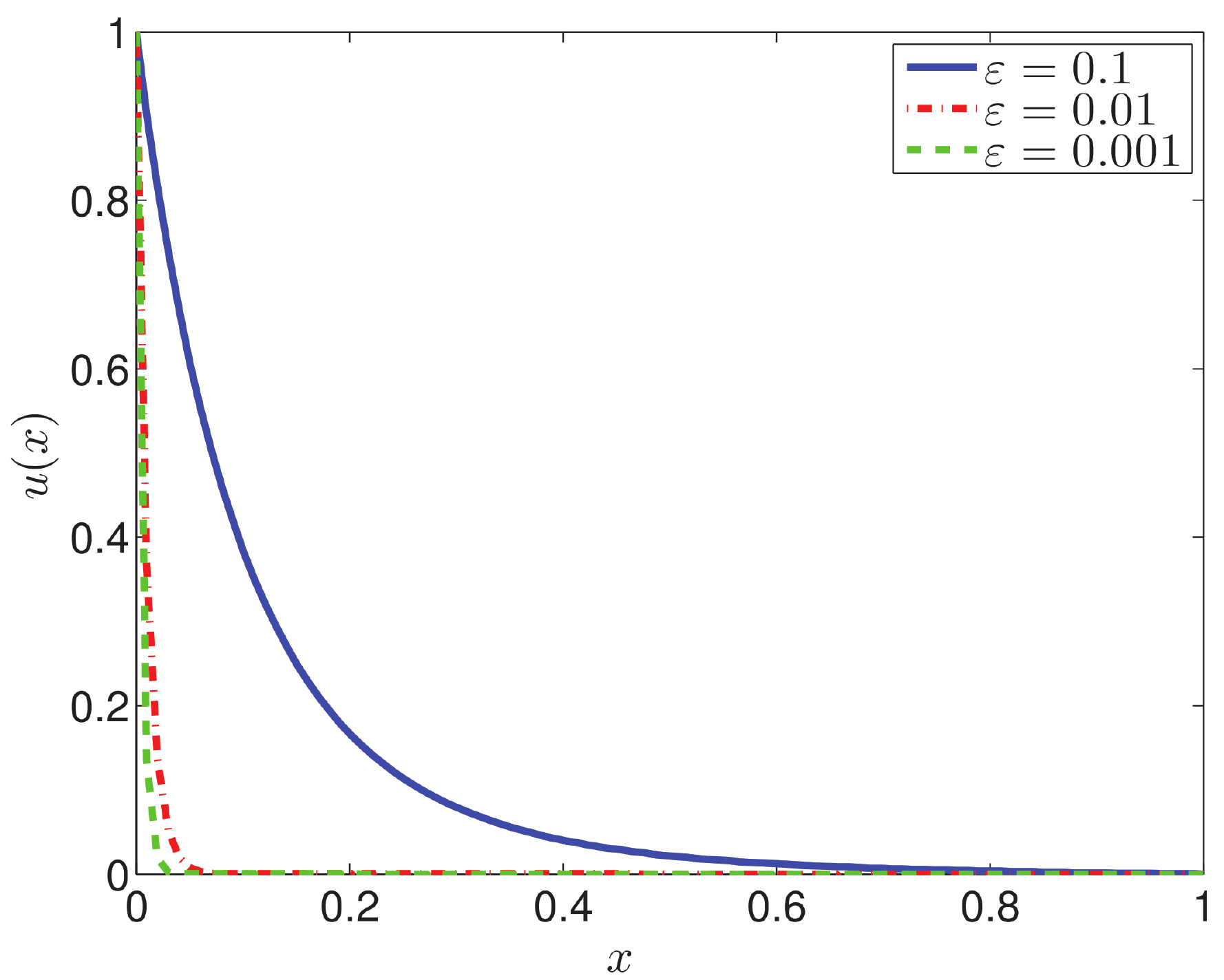}& 
\includegraphics[width=0.465\textwidth,angle=0]
{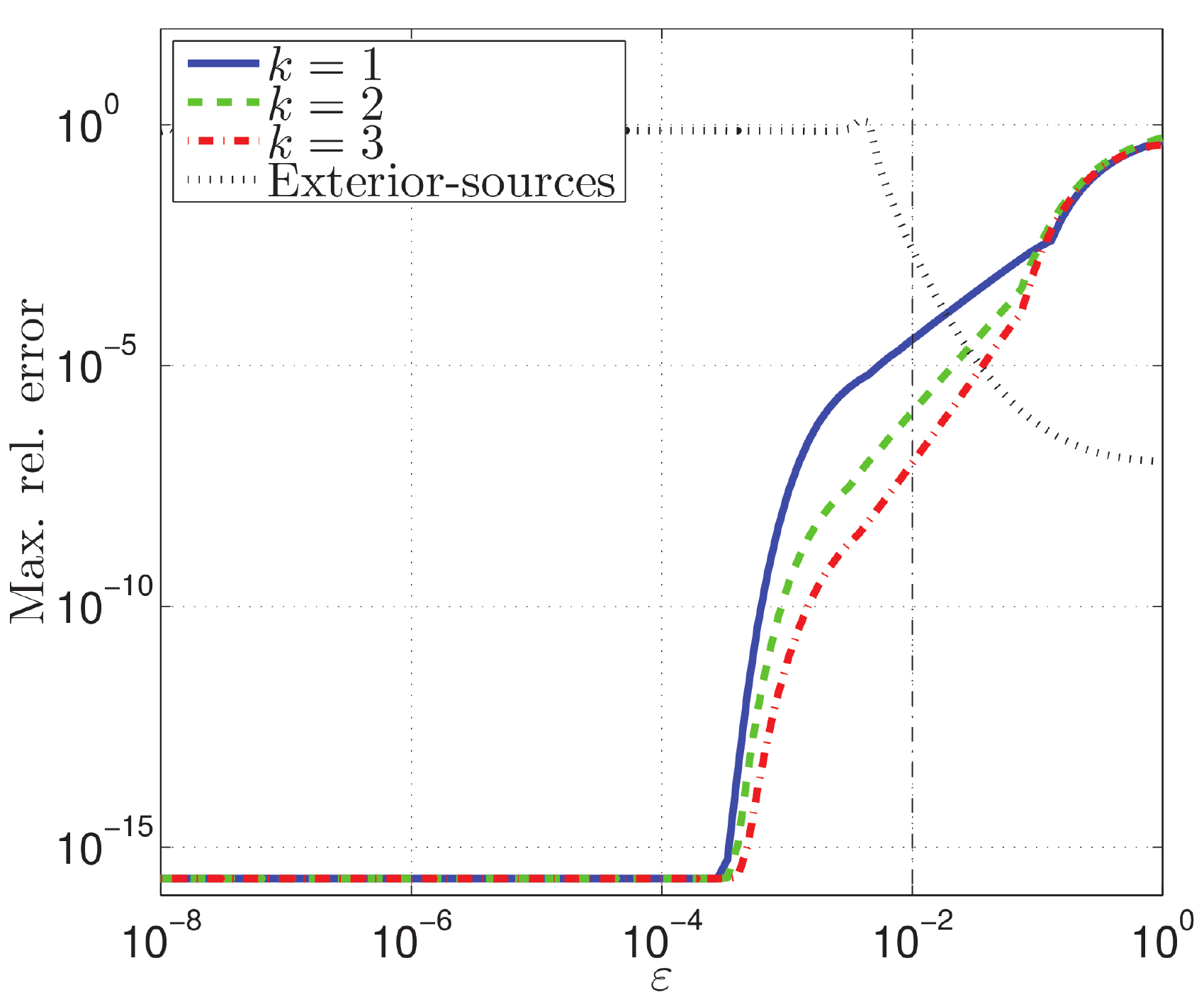} 
\end{array}$ 
\caption{Boundary layer solution with small parameters
  $\varepsilon=0.1,\,0.01,\,0.001$ (left), and maximum relative
  errors, as a function of $\varepsilon$, resulting from use of the
  asymptotic-matching expansions of order $k=1,\,2,\,3$ as well as
  the exterior-source method with $h = 10^{-2}$ (right). The dash-dot
  vertical line on the right plot is located at
  $\varepsilon=h=10^{-2}$.}
\label{fig:ODE-boundary-layer}
\end{center}
\end{figure}

\begin{remark}\label{rem:detect_boundary_layer}
  The adequate detection of the boundary layer in terms of the small
  parameter $\varepsilon$, and, thus, the evaluation of the threshold
  value that defines the limit between the $(h,\varepsilon)$ regions
  for which the exterior-source and asymptotic-matching procedures are 
  applied depends on each particular BVP. As a rule of thumb the
  threshold limit can be fixed to $\varepsilon=h$ (see
  Figure~\ref{fig:ODE-boundary-layer}).
  This selection (with $\varepsilon = \sqrt{\Delta t/2}$ and
  $\varepsilon = \Delta t$ for the heat and wave equations,
  respectively) was used in all of the numerical experiments presented
  in this paper. Section~\ref{diff_numer} and, in particular,
  Table~\ref{tab:cputime}, provide an indication of the computing
  costs required by the dual exterior-source/asymptotic-matching
  boundary-condition strategy we use.
\end{remark}

\subsection{Computational cost of the FC BVP solver\label{ODE_cost}}
The computational cost of the proposed FC-AD method for the solution
of time-dependent problems depends linearly on the cost of the
evaluations of the ODE solution and evaluation mappings $A_{\Nodes}$
and $B_{\Nodes}$ defined in (\ref{eq:def_A_N}) and (\ref{eq:def_B_N}),
respectively. Thus, an efficient numerical implementation of these
operators translate into corresponding efficiencies for the resulting
overall FC-AD time-marching scheme.

The evaluation of the mapping $A_{\Nodes}$ comprises two main
components, namely 1)~Fourier continuation (as described in
Section~\ref{scaled_fc_gram}) of the variable coefficients and the
right-hand side, and 2)~Numerical solution of the resulting linear
system (\ref{eq:ode-lin-sys}). In view of Remark~\ref{FCGram_cost},
point~1) requires $\mathcal{O}(\Nodes\log\Nodes)$ operations. With
regards to point~2), on the other hand, we note that every iteration
of the GMRES method involves one evaluation of the discrete ODE
operator $B_{\Nodes}$ and one evaluation of the
preconditioner. Inspection of Algorithm~\ref{algor:ode_eval} and the
Finite Difference preconditioning algorithm presented in
Section~\ref{gmres} therefore shows that the overall cost of the of
the ODE solver is $2\mbox{FFT}(\Nodes+\Nd-1)
+\Niter\mbox{FFT}(\Nover(\Nodes+\Nd-1))
+4\Niter\mbox{FFT}(2(\Nodes+\Nd-1)) +O(\Nover(\Nodes+\Nd-1))$, where
$\mbox{FFT}(M)$ denotes the number of operations required to evaluate
an FFT of size $M$. In brief, the ODE solver runs in
$\Niter\mathcal{O}(\Nodes\log\Nodes)$ operations.

\section{Full FC-based PDE solver}
\label{sec:full-disc}
Based on algorithms introduced above in this text, this section
introduces FC-based alternating-direction solvers for diffusion and
wave propagation PDEs with variable coefficients.

\subsection{Overall spatial discretization\label{over_spat_disc}}
Let $\Omega$ be a two-dimensional spatial domain with a piecewise
smooth boundary which, without lost of generality, we assume is
contained in the rectangle $R=[0,L^{H}]\times [0,L^{V}]$.  In order to
produce a spatial discretization of $\Omega$, the rectangle $R$ itself
is discretized by means of a uniform Cartesian grid $\Omega_{h}$ given
by
$$
\Omega_{h}=\Omega\cap\{(x_{i},y_{j})=((i-1)h^{H},(j-1)h^{V}),
\ i=1,\ldots,M^{H},\ j=1,\ldots,M^{V}\},
$$
where, for some integers $M^{H},M^{V}>1$ we have set
$h^{H}=L^{H}/(M^{H}-1)$ and $h^{V}=L^{V}/(M^{V}-1)$.  For the sake of
simplicity we assume that each grid line only crosses the boundary
$\partial\Omega$ twice, and we denote by
\begin{align*}
  \{a_{j}^{H},b_{j}^{H}\}=\{x:(x,y_{j})\in\partial\Omega\}& \qquad
  (j=1,\ldots,M^{V}),\quad\mbox {and} \\
  \{a_{i}^{V},b_{i}^{V}\}=\{y:(x_{i},y)\in\partial\Omega\}& \qquad 
  (i=1,\ldots,M^{H}),
\end{align*}
the points of intersection of $\partial \Omega$ with horizontal and
vertical Cartesian lines (shown as green circles and yellow squares
in Figure~\ref{fig:sweep}), respectively. (Generalization to cases for
which more than two intersections occur for some Cartesian lines is
straightforward.) The horizontal and vertical discretization lines
within $\Omega$ and the corresponding sets of indexes, in turn, are
given by
\begin{align*}
  & P^{H}_{j}=\{(x,y_{j})\in\Omega_{h}\} \quad ; \quad 
  I_{j}^{H}=\{i\in\mathbb{N}:(x_i,y_j)\in\Omega_{h}\}
  \qquad (j=1,\ldots,M^{V}),\qquad \mbox{and}\\
  & P^{V}_{i} = \{(x_{i},y)\in\Omega_{h}\}\quad ; \quad 
  I_{i}^{V}=\{j\in\mathbb{N}:(x_i,y_j)\in\Omega_{h}\}
  \qquad (i=1,\ldots,M^{H}).
\end{align*}

Each alternating-direction half-step in the time-marching algorithm
presented in Section~\ref{sec:time-disc} consists of a horizontal
sweep (steps (D2) and (W2)) and a vertical sweep (steps (D4) and
(W3)). For each horizontal (resp. vertical) sweep, the solver requires
solution of a Dirichlet two-point BVP along each vertical
(resp. horizontal) line $P^{V}_{i}$ (resp. $P^{H}_{j}$). The solution of
each Dirichlet two-point BVP, in turn, involves application of the
operators~\eqref{eq:def_B_N} (evaluation of the ODE right-hand side)
and~\eqref{eq:def_A_N} (BVP solution) (see also Section~\ref{bound_proj} 
and Remarks~\ref{end_points} and~\ref{AN_BN_dep_on_ab}). 
Sections~\ref{diffusion} and~\ref{wave_prop} provide a detailed 
description of the discrete versions of the horizontal and vertical 
sweeps, for the problems of diffusion and wave propagation, 
in terms of the operators $A_{\Nodes}$ and $B_{\Nodes}$.

\begin{figure}[!ht] 
\begin{center} 
$\begin{array}{c@{\hspace{1.5cm}}c}
\includegraphics[width=0.4\textwidth,angle=0]
{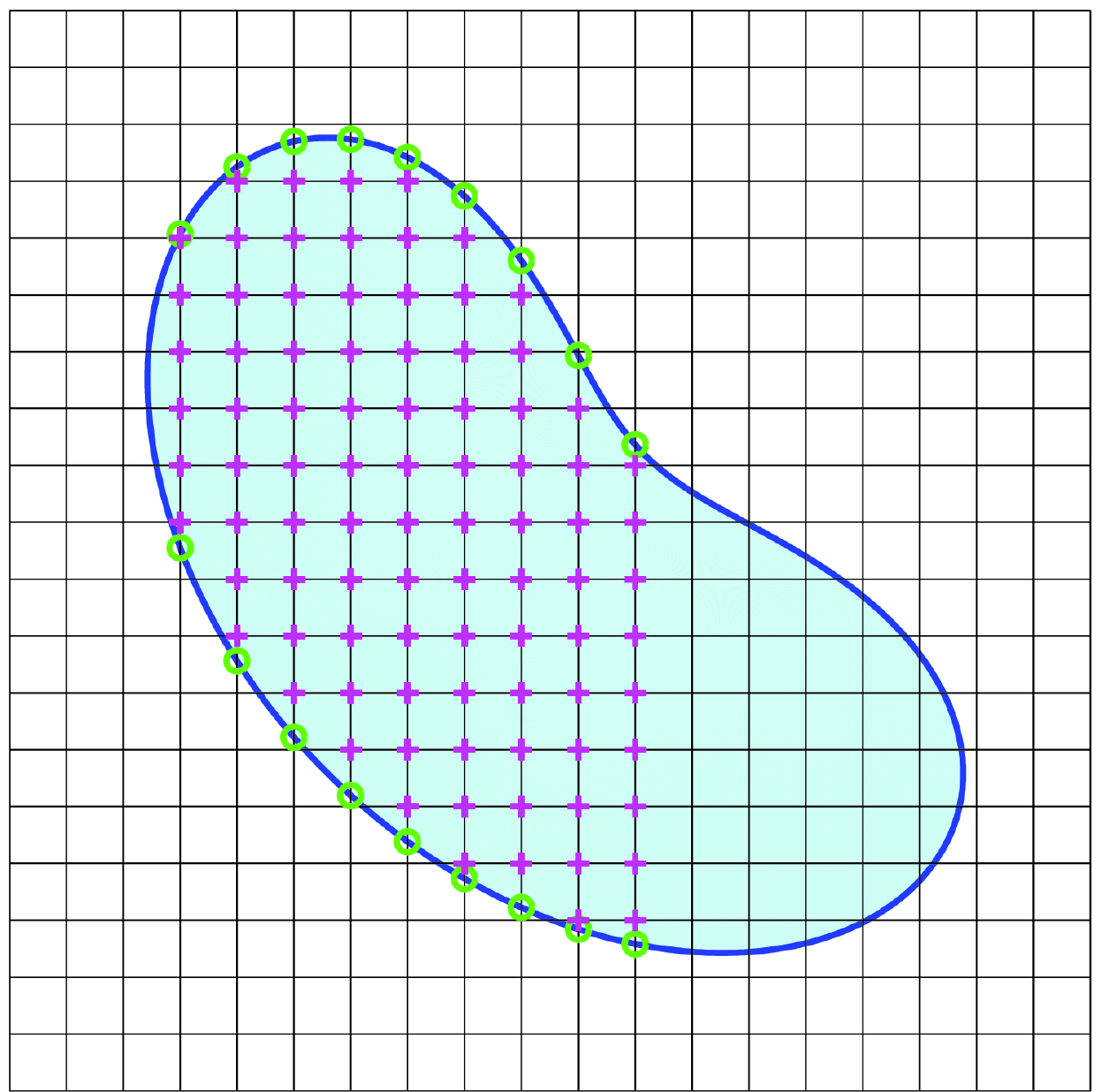} &
\includegraphics[width=0.4\textwidth,angle=0]
{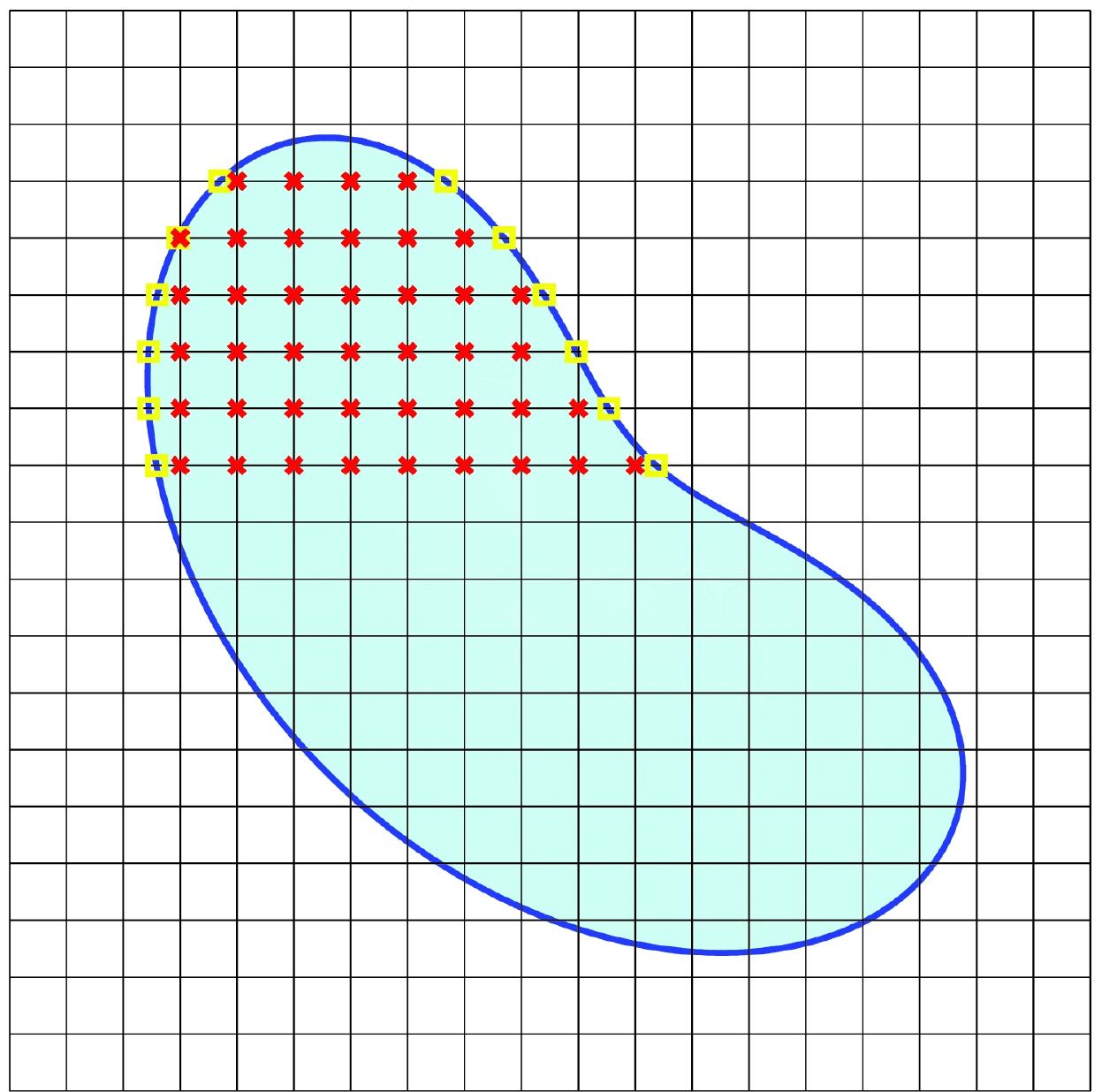} 
\end{array}$ 
\caption{Sweeping procedure used for evaluation of the grid values
  $\boldsymbol{u}^{n+\frac12}_{ij}$ on vertical lines (magenta
  ``$+$''~crosses on the left) and the grid values
  $\boldsymbol{u}^{n+1}_{ij}$ on horizontal lines (red
  ``$\times$''~crosses on the right).}
\label{fig:sweep}
\end{center} 
\end{figure}

\begin{remark}\label{pt_values}
  Throughout Section~\ref{sec:full-disc}, a bold-face symbol such as
  $\boldsymbol{\phi}_{i,j}^{n}$ denotes the value of a grid function
  at a point $(x_i,y_j)\in \Omega_h$ at time $t_{n}$, and for each fixed
  $i$ (resp. for each fixed $j$), we define the vector
  $\boldsymbol{\phi}_{i,\cdot}^{n}=
  (\boldsymbol{\phi}_{i,j}^{n})_{j\in I^{V}_i}$ (resp.
  $\boldsymbol{\phi}_{\cdot,j}^{n}=(\boldsymbol{\phi}_{i,j}^{n})_{i\in
    I^{H}_j}$). In addition, in the case of the diffusion and wave
  problems, with reference to~\eqref{eq:coefODE_heat_def}
  and~\eqref{eq:coefODE_wave_def} respectively we set
  $$
  \boldsymbol{p}^{H}_{i,j}=\mathcal{P}^{H}(x_{i},y_{j}),\quad
  \boldsymbol{p}^{V}_{i,j}=\mathcal{P}^{V}(x_{i},y_{j}),\quad
  \boldsymbol{q}_{i,j}=\mathcal{Q}(x_{i},y_{j}),\quad\mbox{and}\quad
  \boldsymbol{f}_{i,j}^{n}=\mathcal{F}(x_{i},y_{j},t_{n}).   
  $$
  Since originally only the grid values of $\alpha$ and $\beta$ are
  known, the grid values of $\partial_{x}\beta$ and $\partial_{y}\beta$
  needed to evaluate equation $\mathcal{P}^{H}$ and $\mathcal{P}^{V}$ 
  in~\eqref{eq:coefODE_heat_def} and~\eqref{eq:coefODE_wave_def} are 
  approximated at each line by the derivative of its scaled FC(Gram) 
  continuation.
\end{remark}

\subsection{FC-AD diffusion solver\label{diffusion}} 
Taking into account Sections~\ref{sec:diffusion},~\ref{bound_proj}
and~\ref{over_spat_disc} (and, in particular, Remark~\ref{pt_values}
concerning bold-face symbols denoting grid functions and vectors of
grid function values), our full FC-based ADI discrete procedure for
the diffusion problem consists of the following four steps:
\begin{itemize}
\item[(D1)$_\Nodes$] Initialize $\boldsymbol{u}_{i,j}^{0}$ and 
$\boldsymbol{w}_{i,j}^{0}$ on the vertical lines $P_{i}^{V}$ 
($j\in I_{i}^{V},\ i=1,\ldots,M^{H}$),
\begin{equation*}
\left\{
\begin{array}{ll}
  \boldsymbol{u}_{i,j}^{0}=u_{0}(x_{i},y_{j}),\\[0.2cm]
  \boldsymbol{w}_{i,\cdot}^{0}=B_{\Nodes}
  (-\boldsymbol{p}^{V}_{i,\cdot},-\boldsymbol{q}_{i,\cdot})
  \boldsymbol{u}_{i,\cdot}^{0},
\end{array}\right.
\end{equation*}
\end{itemize}
and, for $n=0,1,\ldots,n_{\mathrm{max}}$,
\begin{itemize} 
\item[(D2)$_\Nodes$] compute $\boldsymbol{u}^{n+\frac12}_{i,j}$ on 
the vertical lines $P_{i}^{V}$ ($j\in I_{i}^{V},\ i=1,\ldots,M^{H}$),
\begin{equation*}
\boldsymbol{u}^{n+\frac12}_{i,\cdot}=
A_{\Nodes}(\boldsymbol{p}^{V}_{i,\cdot},\boldsymbol{q}_{i,\cdot})
\begin{pmatrix}
\boldsymbol{w}_{i,\cdot}^{n}+\boldsymbol{f}_{i,\cdot}^{n+\frac14}\\[0.2cm]
g^{n+\frac12}(a_{i}^{V})\\[0.2cm]
g^{n+\frac12}(b_{i}^{V})
\end{pmatrix},
\end{equation*}
\item[(D3)$_\Nodes$] compute $\boldsymbol{w}^{n+\frac12}_{i,j}$ on 
the horizontal lines $P_{j}^{H}$ ($i\in I_{j}^{H},\ j=1,\ldots,M^{V}$),
\begin{equation*}
\boldsymbol{w}^{n+\frac12}_{i,j}=2\boldsymbol{u}^{n+\frac12}_{i,j}
-\boldsymbol{w}^{n}_{i,j}-\boldsymbol{f}^{n+\frac14}_{i,j},
\end{equation*}
\item[(D4)$_\Nodes$] compute $\boldsymbol{u}^{n+1}_{i,j}$ on the 
horizontal lines $P_{j}^{H}$ ($i\in I_{j}^{H},\ j=1,\ldots,M^{V}$),
\begin{equation*}
\boldsymbol{u}^{n+1}_{\cdot,j}=
A_{\Nodes}(\boldsymbol{p}^{H}_{\cdot,j},\boldsymbol{q}_{\cdot,j})
\begin{pmatrix}
\boldsymbol{w}_{\cdot,j}^{n+\frac12}+\boldsymbol{f}_{\cdot,j}^{n+\frac34}\\[0.2cm]
g^{n+1}(a_{j}^{H})\\[0.2cm]
g^{n+1}(b_{j}^{H})
\end{pmatrix}.
\end{equation*}
\end{itemize}

Note that the initialization step (D1)$_\Nodes$ only requires no more 
than $M^{H}$ evaluations of the mapping $B_{\Nodes}$, and every time 
step (D2$_{\Nodes}$)-(D4$_{\Nodes}$) involves no more than $M^{H}+M^{V}$
evaluations of the mapping $A_{\Nodes}$---in other words, each
time step requires no more than $M^{H}+M^{V}$ solutions of
one-dimensional two-point BVP with Dirichlet boundary conditions. The
computational cost of these ODE solvers is analysed in
Section~\ref{ODE_cost}. The cost of the overall diffusion PDE solver
(which, in brief, runs at FFT speeds), is illustrated in
Tables~\ref{tab:cputime} and~\ref{tab:heating-circuit}.

\subsection{FC-AD wave propagation solver\label{wave_prop}}
Taking into account
Sections~\ref{wave:time-discrete},~\ref{bound_proj}
and~\ref{over_spat_disc} as well as Remark~\ref{pt_values}, our full
FC-based ADI discrete procedure for the wave propagation problem 
consists of the following three steps:
\begin{itemize}
\item[(W1)$_\Nodes$] Initialize $\boldsymbol{u}_{i,j}^{0}$ and 
$\boldsymbol{u}_{i,j}^{1}$ on the vertical lines $P_{i}^{V}$ 
($j\in I_{i}^{V},\ i=1,\ldots,M^{H}$),
\begin{equation*}
\left\{
\begin{array}{ll}
\boldsymbol{u}_{i,j}^{0}=u_{0}(x_{i},y_{j}),\\[0.2cm]
\boldsymbol{u}_{i,j}^{1}=u_{0}(x_{i},y_{j})+\Delta t u_{1}(x_{i},y_{j}),
\end{array}\right.
\end{equation*}
\end{itemize}
and, for $n=0,1,\ldots,n_{\mathrm{max}}$,
\begin{itemize} 
\item[(W2)$_\Nodes$] compute $\boldsymbol{w}^{n+\frac12}_{i,j}$ on the 
vertical lines $P_{i}^{V}$ ($j\in I_{i}^{V},\ i=1,\ldots,M^{H}$),
\begin{equation*}
\boldsymbol{w}^{n+\frac12}_{i,\cdot}=A_{\Nodes}
(\boldsymbol{p}_{i,\cdot}^{V},\boldsymbol{q}_{i,\cdot})
\begin{pmatrix}
2\boldsymbol{u}_{i,\cdot}^{n}-\boldsymbol{u}_{i,\cdot}^{n-1}
+\boldsymbol{f}_{i,\cdot}^{n+\frac12}\\[0.2cm]
g^{n+1}(a_{i}^{V})\\[0.2cm]
g^{n+1}(b_{i}^{V})
\end{pmatrix},
\end{equation*}
\item[(W3)$_\Nodes$] compute $\boldsymbol{u}^{n+1}_{i,j}$ on the 
horizontal lines $P_{j}^{H}$ ($i\in I_{j}^{H},\ j=1,\ldots,M^{V}$),
\begin{equation*}
\boldsymbol{u}^{n+1}_{\cdot,j}=A_{\Nodes}
(\boldsymbol{p}_{\cdot,j}^{H},\boldsymbol{q}_{\cdot,j})
\begin{pmatrix}
\boldsymbol{w}^{n+\frac12}_{\cdot,j}\\[0.2cm]
g^{n+1}(a_{j}^{H})\\[0.2cm]
g^{n+1}(b_{j}^{H})
\end{pmatrix}.
\end{equation*}
\end{itemize}

Every time step (W2$_{\Nodes}$)-(W3$_{\Nodes}$) in the present wave
equation algorithm requires at most $M^{H}+M^{V}$ evaluations of the
mapping $A_{\Nodes}$ and thus, in view of Section~\ref{ODE_cost}, the
overall FC-based wave propagation solver runs at FFT speeds.

\section{Numerical results}
\label{sec:numerical}
This section presents a variety of numerical results demonstrating the
accuracy, unconditional stability, reduced computational cost and
spatial dispersionlesness of the variable-coefficient FC-AD algorithms
introduced in this paper. Implementation details and hardware setup
used include the following:
\begin{enumerate}
\item All numerical simulations presented in this section have
  resulted from Fortran implementations of the FC-AD algorithms
  introduced in previous sections, running on a single processor
  Intel{\textregistered} Xeon{\textregistered} (model X5570) at
  2.93~GHz with 8MB cache size.
\item \label{pt_two}In accordance with Sections~\ref{gmres}
  and~\ref{param_selc}, the numerical simulations presented in this
  section use the FC(Gram) parameters $\Ndelta=10$ and $\Nd=\lfloor
  26(1+(\Nodes-21)/100)\rfloor$, and, following~\cite{bruno10,lyon10},
  the values $\Ndeg=5$ and $4$ for PDE solver of the diffusion and the
  wave model, respectively. (In agreement with those references we
  have found that these values of $\Ndeg$ ensure unconditional
  stability of the FC-AD algorithm.) In all cases the oversampling
  parameter used for the GMRES preconditioner (Section~\ref{gmres}) is
  set to $\Nover = 4$, and boundary conditions are enforced by means
  of the hybrid exterior-source/asymptotic-matching algorithm (see
  Remark~\ref{rem:detect_boundary_layer}).
\item \label{pt_three} At each time step $t_n$ the solution is stored
  as a matrix $\left(\boldsymbol{u}^{n+1}_{i,j}\right)$ of size 
  $M^{H}\times M^{V}$ containing the approximate solution values at 
  $(x_i,y_j )\in\Omega_h$ and zeroes for $(x_i,y_j )\not\in\Omega_h$).
\item \label{pt_five} Since the forward operator $B_{\Nodes}$ and the
  BVP solver $A_{\Nodes}$ use periodic extensions of the original
  problem, the Fortran implementation of both procedures for each
  horizontal and vertical grid lines use work vectors with more
  entries than the number of points supported on lines $P^{H}_{i}$ and
  $P^{V}_{j}$ of $\Omega_{h}$. For each half-time step ((D2)$_N$,
  (W2)$_N$, (D4)$_N$ and (W3)$_N$) and each associated horizontal and
  vertical line, only the output quantities for indices in the
  corresponding sets $I^{H}_{i}$ and $I^{V}_{j}$ (that is, for the
  corresponding points in the computational domain $\Omega_h$) are
  stored in the matrices $\left(\boldsymbol{u}^k_{i,j}\right)$.
\item All needed FFTs are performed using the FFTW
  library~\cite{frigo05}. In addition, FFTW have been used to transpose
  {\it in-place} the matrices $\left(\boldsymbol{u}^k_{i,j}\right)$ to
  preserve the contiguous memory access (column-wise in the Fortran
  implementation) prior to the needed transfers of horizontal or
  vertical lines to the BVP solver work vector mentioned in
  point~\ref{pt_five}.
\end{enumerate}

\subsection{Boundary conditions: exterior-source and 
asymptotic-matching procedures\label{sec:bnd_numer}}
As indicated in Remark~\ref{rem:detect_boundary_layer}, our
variable-coefficient FC-AD algorithm automatically selects the
mechanism---either exterior-sources or asymptotic-matching---for
enforcement of boundary conditions. In view of the discussion in
Section~\ref{stiff}, it is clear that the hybrid
exterior-source/asymptotic-matching procedure can be used to produce
accurate solutions for arbitrary time-steps in computing times per
time-step that remain bounded as $\Delta t\to 0$. The present section,
in turn, presents numerical results that demonstrate quantitatively
the impact of the hybrid approach, in terms of computing time and
accuracy, on the solution of full PDE problems.

Table~\ref{tab:cputime}, which presents computing times required by
the FC-AD diffusion solver for $1000\times 1000$ two-dimensional grid
and for two different values of the time step $\Delta t$ (in the
particular case of the first diffusion problem considered in
Section~\ref{diff_numer}, see also Figure~\ref{fig:heat-time}),
provides some insight into the computing costs required by the hybrid
approach in each of the two possible $(h,\Delta t)$ regimes. The
``Setup'' columns in this and subsequent tables display the overall
time used in precomputations---including each one of the following
operations {\em for each horizontal and vertical line $P^{H}_{j}$ and
  $P^{V}_{i}$}: 1)~Precomputation of FFTW plans~\cite{frigo05} in
real-valued arithmetic; 2)~Evaluation of the scaled FC(Gram)
continuations for the variable coefficients and the right-hand side;
3)~LU factorization of the preconditioning finite-difference matrix
(see Section~\ref{gmres}); 4)~Evaluation of the auxiliary solutions
for treatment of boundary conditions (see Section~\ref{sec:equiv-for} 
and~\ref{asympt_exp}); and 5)~Initialization of the initial
time step. As can be gleaned from Table~\ref{tab:cputime}, for the
diffusion problem under consideration, the FC-AD asymptotic-matching
procedure leads to somewhat smaller overall setup times but comparable
times per time-step as the corresponding exterior-source method;
similar remarks apply to our FC-AD implementation of the
variable-coefficient wave propagation problem. Thus, the
asymptotic-matching method resolves the boundary layers that arise for
small values of $\Delta t$ (which cannot be accurately discretized by
the exterior-source method, unless unduly fine grids are used) at a
cost comparable to that which would be required by the exterior-source
method in absence of boundary layers. The exterior-source method, in
turn, can adequately treat cases in which no significant boundary
layers exist---for which the asymptotic-matching method would be
inaccurate; cf. Figure~\ref{fig:ODE-boundary-layer}
and~\ref{fig:k100_f1}.

\begin{table}[htb!]
\begin{center}
\begin{tabular}{llccccccccc}
\hline
Boundary condition & & 
\multicolumn{2}{c}{$tol_{\mathrm{GMRES}}=10^{-15}$} && 

\multicolumn{2}{c}{$tol_{\mathrm{GMRES}}=10^{-10}$} && 
\multicolumn{2}{c}{$tol_{\mathrm{GMRES}}=10^{-6}$}\\
\cline{3-4} \cline{6-7} \cline{9-10}
enforcement & $\Nover$ & Setup & Time step 
&& Setup & Time step 
&& Setup & Time step\\
\hline
$h=10^{-3}$, 
& 1 & 13.493  & 6.169 && 5.617 & 2.265 && 4.264 & 1.387 \\
$\Delta t=5\times 10^{-3}$,
& 2 & ~8.119 & 3.576 && 5.693 & 2.129 && 4.943 & 1.480 \\
(exterior-sources
& 4 & ~8.628 & 3.788 && 6.343 & 2.332 && 5.408 & 1.816  \\
regime)
& 8 & 11.700 & 5.189 && 8.816 & 3.273 && 7.532 & 2.548 \\
\hline 
$h=10^{-3}$, 
& 1 & ~3.039 & 6.301 && 3.005 & 3.339 && 3.041 & 1.478 \\
 $\Delta t=10^{-6}$,
& 2 & ~3.182 & 5.976 && 3.142 & 2.474 && 3.180 & 1.626 \\
(asymptotic-matching
& 4 & ~4.175 & 6.326 && 4.147 & 2.785 && 4.183 & 1.848 \\
regime)
& 8 & ~5.499 & 8.275 && 5.456 & 3.630 && 5.496 & 2.585 \\
\hline
\end{tabular}
\caption{CPU time (in seconds) required by our implementation of the
  FC-AD method, for the variable-coefficient diffusion problem
  mentioned in Section~\ref{sec:bnd_numer}, in two different regimes
  of the boundary condition enforcement algorithm.}
\label{tab:cputime}
\end{center}
\end{table}

To demonstrate, in a simple context, the hybrid
exterior-source/asymptotic-matching procedure for enforcement of
boundary conditions
(Sections~\ref{wave:time-discrete},~\ref{sec:equiv-for}
and~\ref{asympt_exp}), here we apply the hybrid algorithm in
conjunction with a one-dimensional FC-based time marching scheme to a
one-dimensional wave-propagation problem: the one-dimensional wave
equation~\eqref{eq:wave} with variable coefficients $\alpha(x)=1+4x^2$
and $\beta(x)=2-x+8x^2$ in the unit interval, and with Dirichlet
boundary conditions such that the exact solution is given by the
oscillatory function $u(x,t)=\sin(100x-2\pi t)$.
Figure~\ref{fig:k100_f1} displays the resulting maximum relative
solution error as a function of the time step $\Delta t$ and the grid
size $h$.  The left error map in this figure presents the relative
errors that result as the exterior-source procedure is used for all
values of $(h,\Delta t)$. As expected, in presence of boundary layers
(that arise for small values of $\Delta t$), the overall numerical
approximation provided by the exterior-source algorithm is completely
inaccurate. The right-hand error map in Figure~\ref{fig:k100_f1}, on
the other hand, displays the errors that result from the hybrid
boundary-conditions algorithm. The dashed line in the right-hand map
separates regimes in the hybrid approach: above this line the
exterior-source algorithm was utilized, below this line the
asymptotic-matching method was used.  Close consideration of the
right-hand error map shows that the hybrid boundary-conditions
algorithm leads to an overall convergent PDE solver. In the right-most
region of the right-hand error map (larger values of $h$), the error,
which results mostly from the coarseness of the spatial FC
discretization, is essentially independent of $\Delta t$. In the
left-most portion of the map, where finer spatial grids are used, the
spatial discrete errors are negligible in comparison with the time
discretization errors, and hence the resulting errors depends only on
the time-step value $\Delta t$.

\begin{figure}[!ht]
\begin{center}
$\begin{array}{cc}
\includegraphics[width=0.475\textwidth,angle=0]
{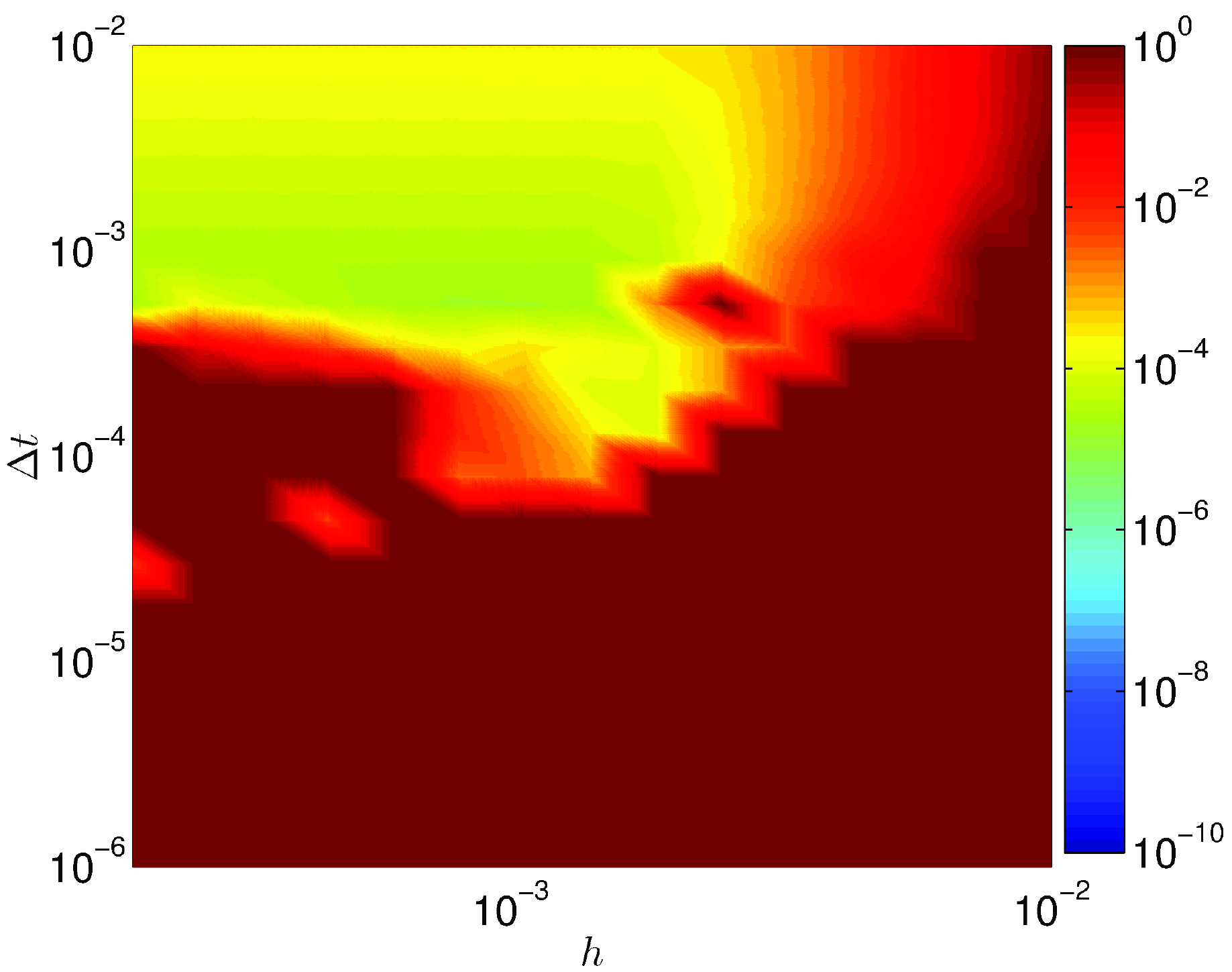} &
\includegraphics[width=0.475\textwidth,angle=0]
{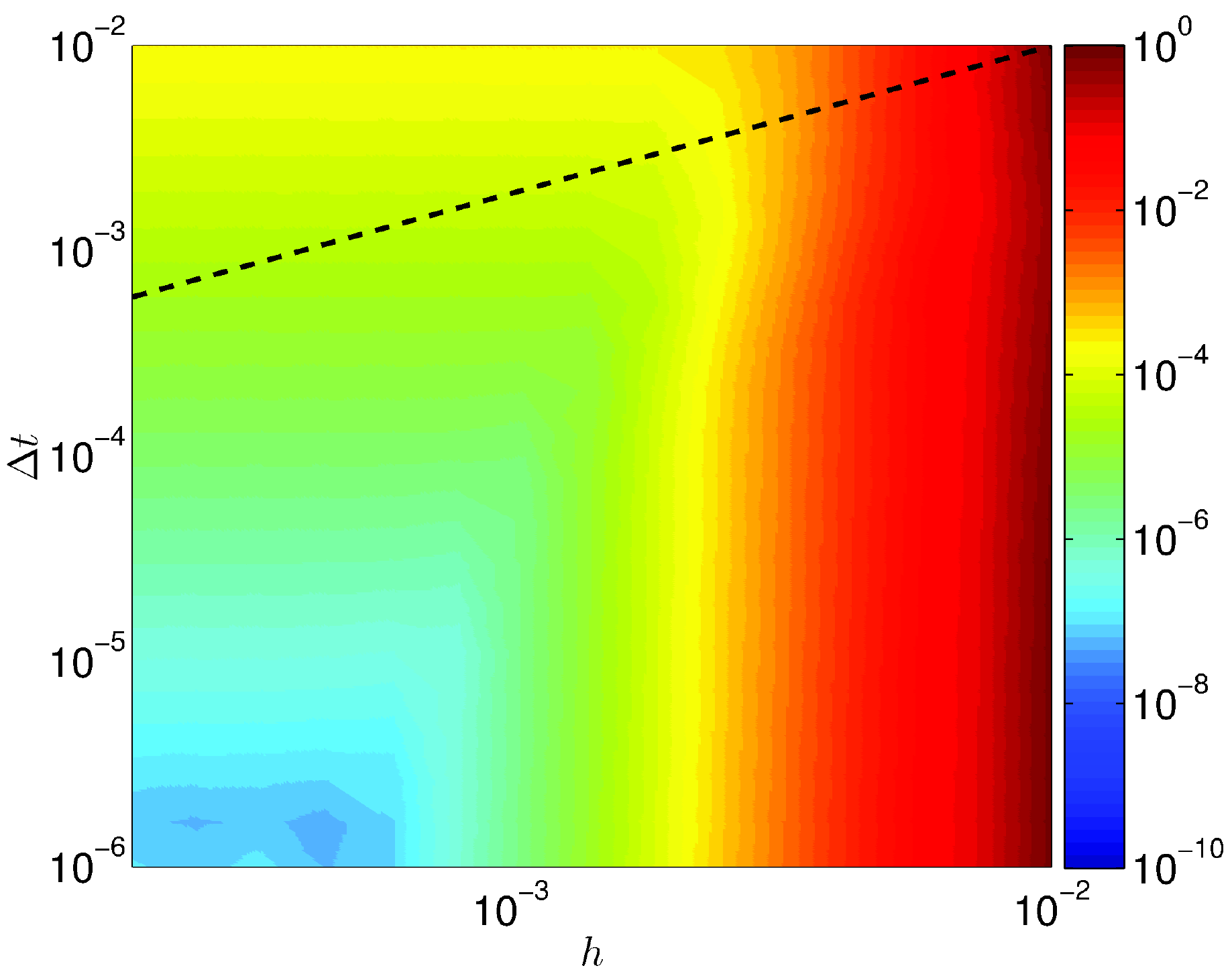}
\end{array}$
\caption{Maximum relative error in the FC-AD approximate solution of a
  one dimensional wave equation of the form~\eqref{eq:wave} using
  solely the exterior-source procedure (left) and using the hybrid
  boundary-conditions algorithm (right).
\label{fig:k100_f1}}
\end{center}
\end{figure}

\subsection{Diffusion problem: 
performance, convergence and stability\label{diff_numer}}
A variety of numerical examples presented in this section demonstrate
the character of the FC-AD scheme for diffusion problems with variable
coefficients.  For the first set of tests of this section we consider
a problem of the form~\eqref{eq:heat} with variable coefficients given
by $\alpha(x,y)=x+y+1$ and $\beta(x,y)=2x+0.5y+1$ in the domain
bounded by the curve $(x/9)^6+(y/5)^6=(1/20)^6$. The right-hand side and the boundary
conditions have been selected in such a way that the function
$u(x,y,t)=\sin(\pi(3x^2+2y^2+2t))$ is the exact solution of the
problem. For our $\Delta t$ convergence studies the FC-AD numerical
solution is produced up to the final time $T=0.1$.
\begin{figure}[!ht] 
\begin{center} 
$\begin{array}{cc}
\includegraphics[height=0.28\textheight,angle=0]
{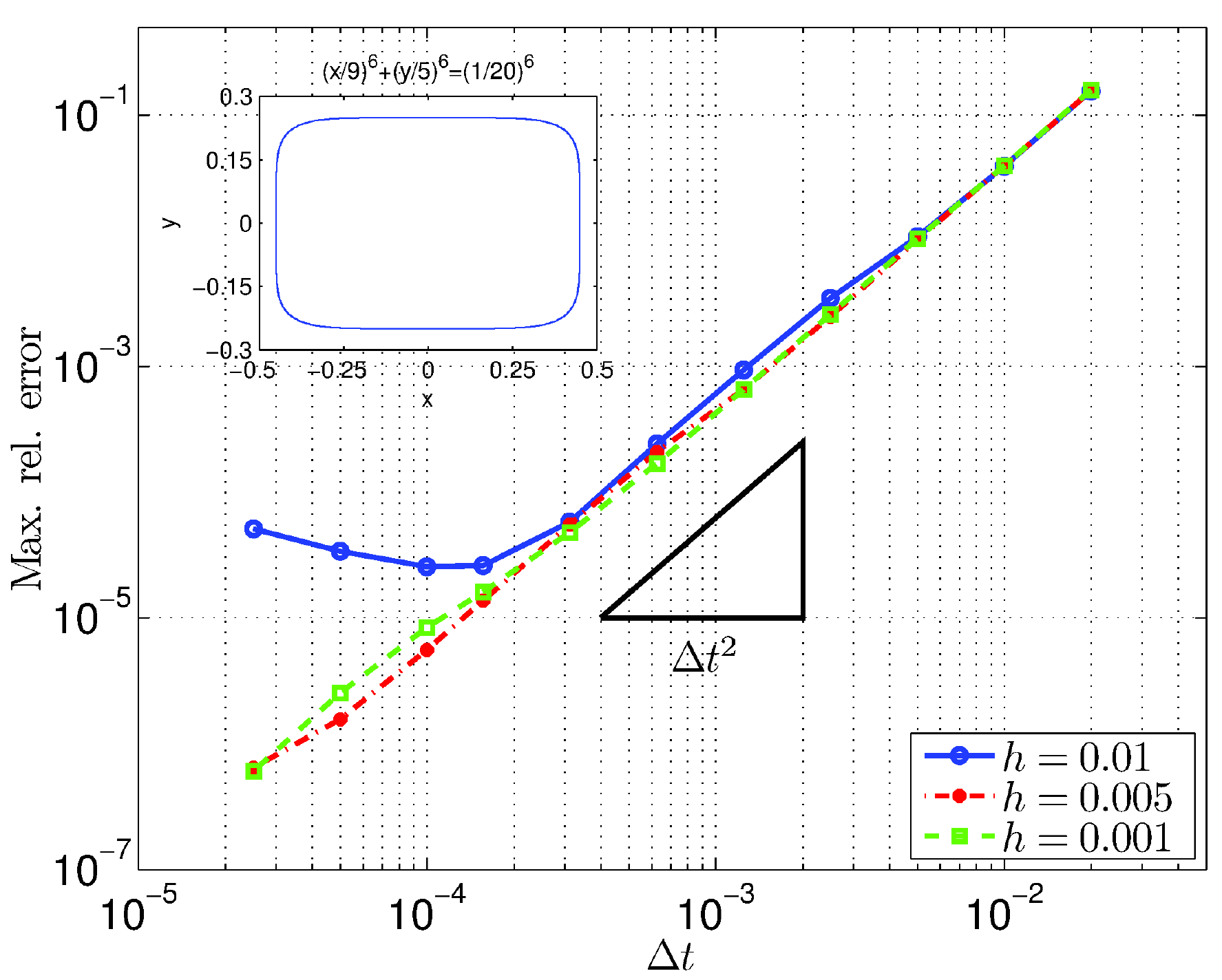} & 
\includegraphics[height=0.28\textheight,angle=0]
{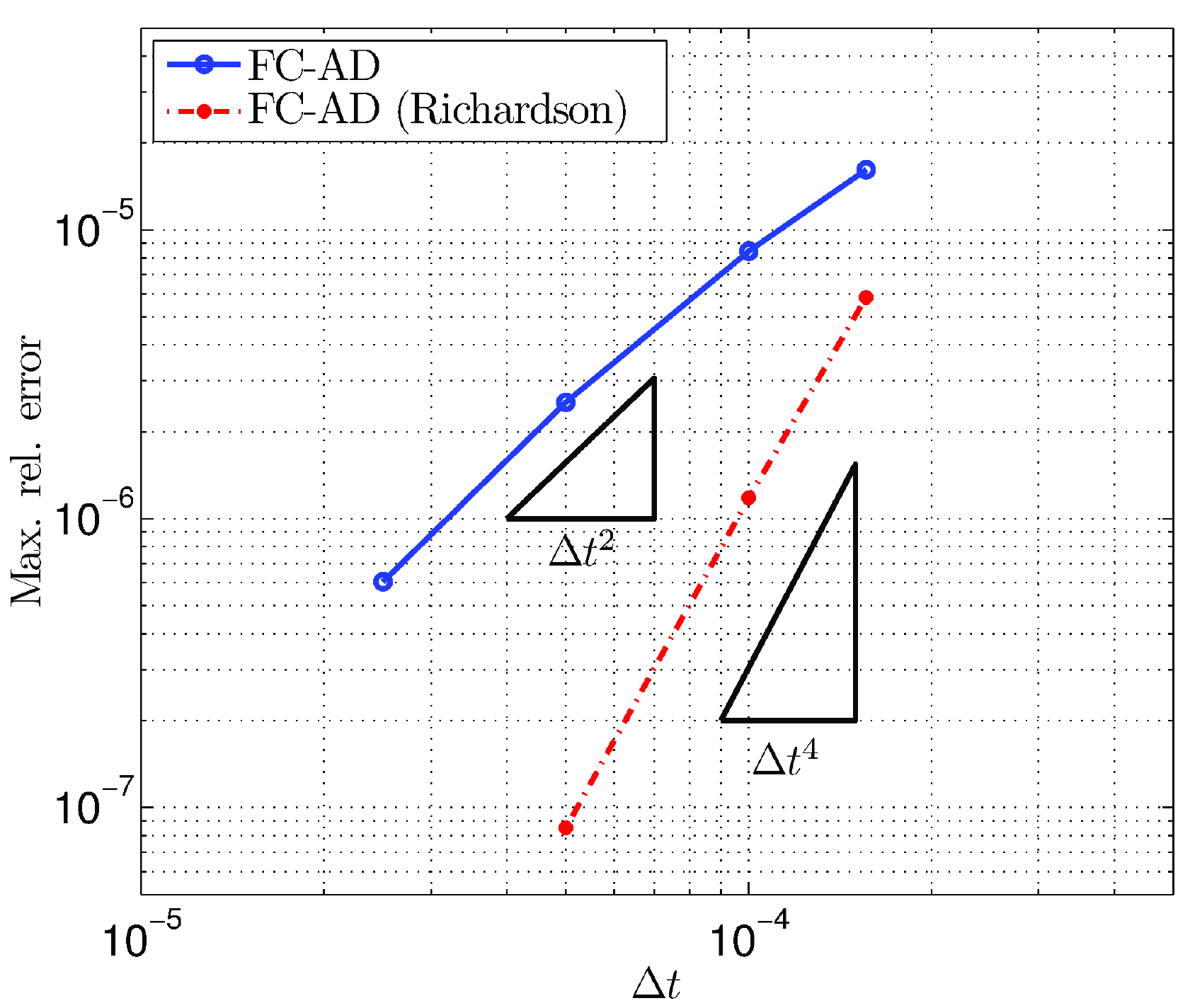} 
\end{array}$ 
\caption{Left: Maximum relative errors in the FC-AD approximate
  solution of the first diffusion problem mentioned in
  Section~\ref{diff_numer}, as a function of the time step $\Delta t$,
  for three different spatial grids. Right: Comparison of these
  solution errors with those obtained, for the same problem, via an
  application of the Richardson extrapolation method leading to two
  orders of improvement in the temporal convergence rate.}
\label{fig:heat-time}
\end{center} 
\end{figure}
The spatial discretizations are given by uniform grids of size $h$
within the square $[-0.5,0.5]\times[-0.3,0.3]$, whereas the 
preconditioner uses  the oversampling ratio $\Nover=4$ and the GMRES 
tolerance $tol_{\mathrm{GMRES}}=10^{-10}$.

Figure~\ref{fig:heat-time} displays maximum relative errors throughout
$\Omega_{h}$ (evaluated through comparison with the exact solution)
produced by the second-order in time FC-AD algorithm described in
Section~\ref{diffusion} for $h=0.01$, $0.005$, and $0.001$ (left plot
in Figure~\ref{fig:heat-time}), as well as corresponding results
obtained by means an additional application of the Richardson
extrapolation procedure (right plot in Figure~\ref{fig:heat-time},
see~\cite{bruno10} and references therein for details on the the
application of the Richardson extrapolation method in the time
domain). Since the spatial discretization errors for different grids
are negligible with respect to the time discretization (at least for
the the coarser time steps), the relative error exhibits the expected
second-order convergence that results from the ADI time-marching
scheme described in Section~\ref{sec:time-disc}. Additionally, the
convergence displayed in this figure demonstrates that the underlying
solver is not subject to the ordinary CFL constraint $\Delta t\sim
h^2$ required by explicit solvers: for the case $h=0.001$ the largest
$\Delta t$ values used here are in fact four orders of magnitude
larger than would be allowed by the quadratic CFL constraint. (In
fact, our experiments suggest that $\Delta t $ can be increased
arbitrarily without leading to instability: values as large as $\Delta
t = 100$ and $\Delta t = 1000$, etc, lead to stable, albeit inaccurate
solutions.)

To demonstrate the performance of the FC-AD method for general
geometries we consider the curved, non-convex {\it heating circuit}
structure depicted in Figure~\ref{fig:heating-circuit}.  The variable
heat-equation coefficients used correspond to (variable) thermal
constants of silicon; the geometry, in turn, represents a
$80\times 120$ rectangular plate containing a
curved heating circuit.
\begin{figure}[!ht] 
\begin{center} 
$\begin{array}{c@{\hspace{0.16\textwidth}}c}
\includegraphics[width=0.35\textwidth,angle=0]
{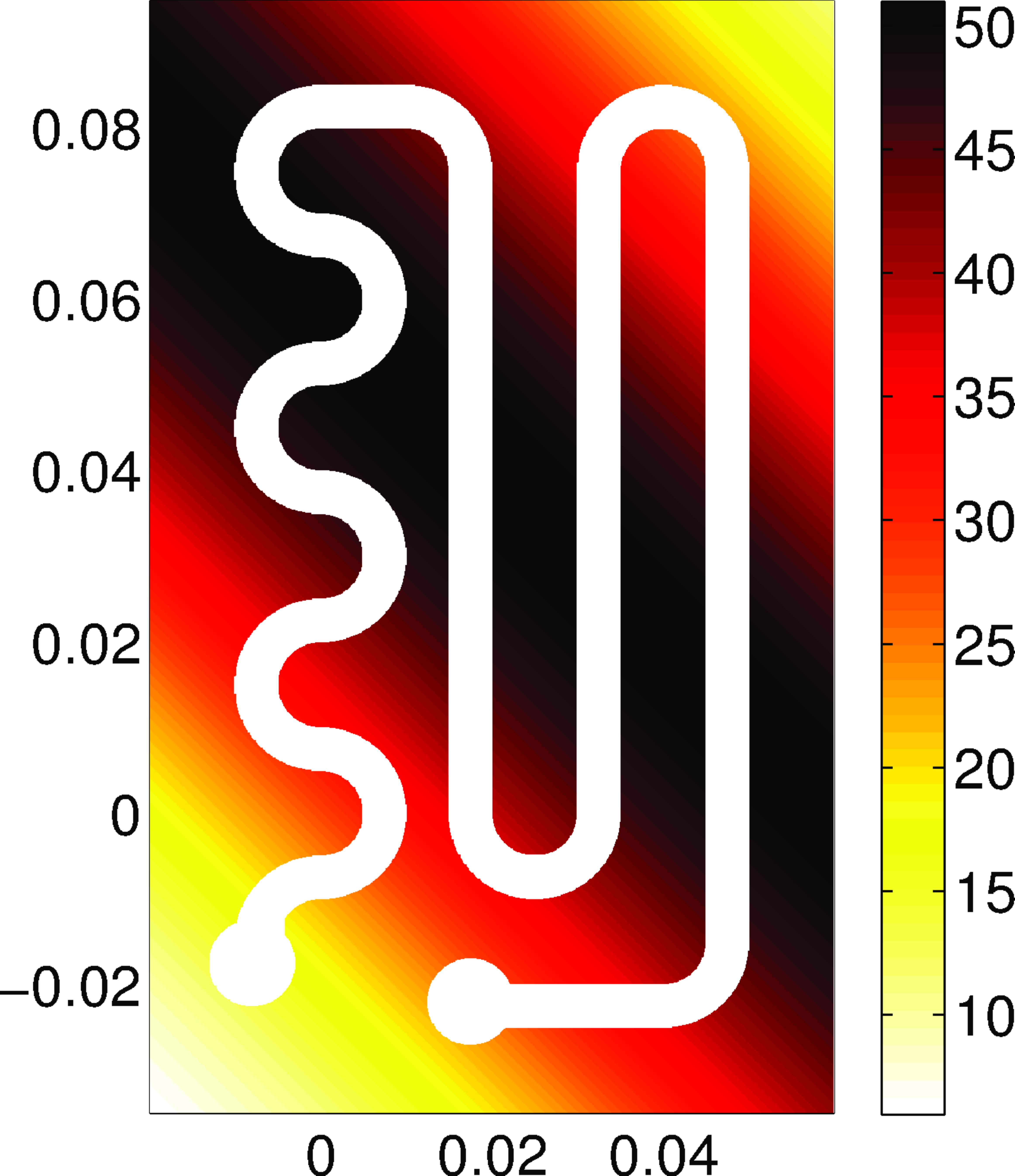} &
\includegraphics[width=0.35\textwidth,angle=0]
{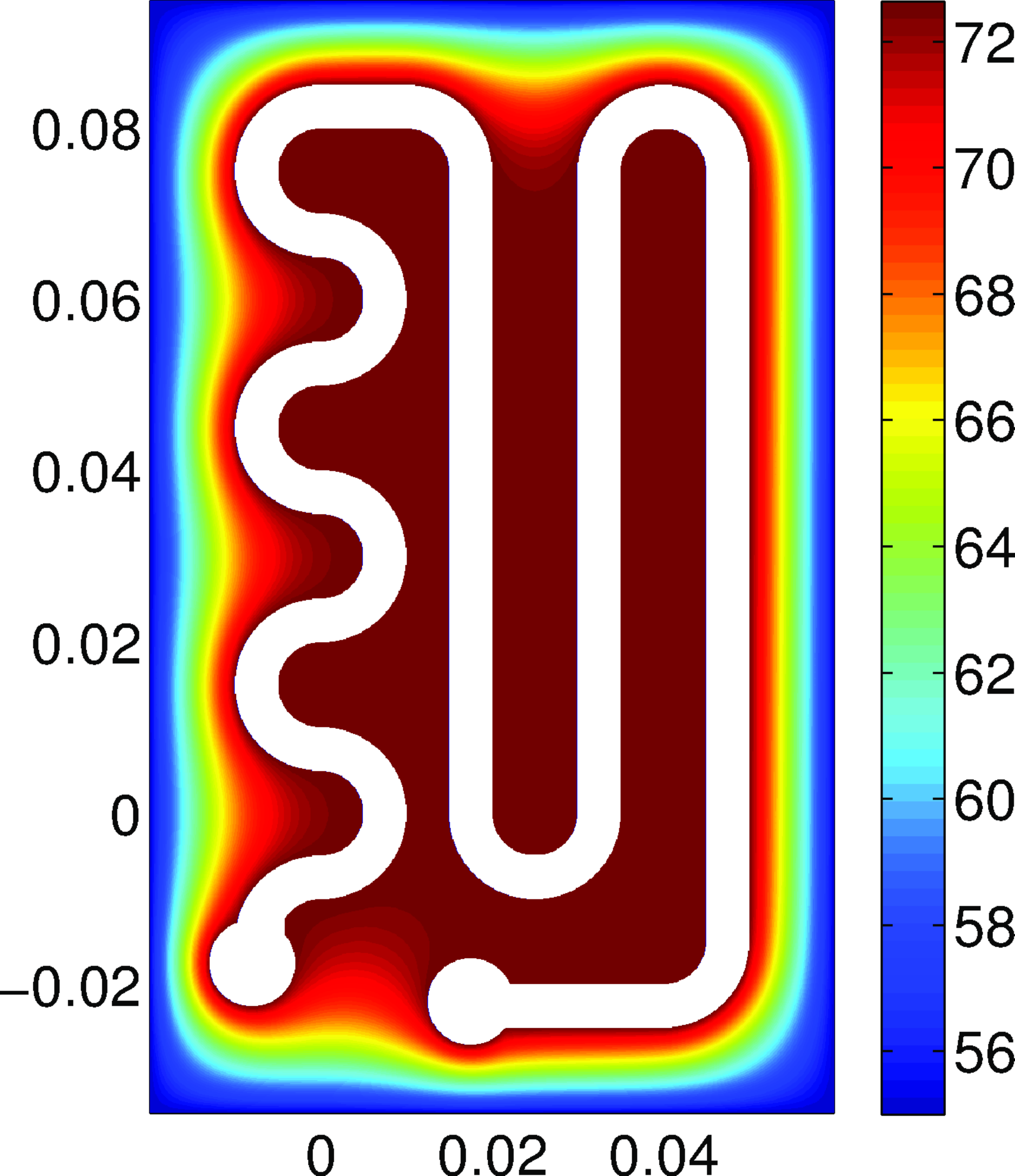} 
\end{array}$ 
\caption{Variable thermal conductivity (left) and temperature field
  produced by the FC-AD method at time $T=0.1$ (right) for the second
  diffusion problem mentioned in Section~\ref{diff_numer}.}
\label{fig:heating-circuit}
\end{center} 
\end{figure}
The thermal conductivity varies from $5$ to $50$ (the typical range for 
silicon) as shown in the left plot of
Figure~\ref{fig:heating-circuit}. The initial temperature has been
fixed to $u_{0}=55$. The temperature profile on the exterior
boundaries of the plate is fixed also to $55$, whereas that the
inner boundaries of the heating circuit are driven by the function
$g(x,y,t)=55+30\sin(20\pi t)$.

The right plot in Figure~\ref{fig:heating-circuit} presents the
temperature field at time $T=0.1$---that is, one period of the
boundary data function $g$---evaluated by means of the FC-AD algorithm
described in Section~\ref{diffusion} (without Richardson
extrapolation, for simplicity) on a spatial grid containing $800\times
1300$ discretization points.  The corresponding maximum errors were
evaluated through comparison with a reference numerical solution
produced using $\Delta t=2.5\times 10^{-5}$ on a $1600\times 2600$
spatial grid: it was found that the $800\times 1300$ solutions with
$\Delta t=10^{-4}$ and $5\times 10^{-5}$ and
$tol_{\mathrm{GMRES}}=10^{-10}$ contain maximum errors of $0.054$\%
and $0.013$\%, respectively---demonstrating, in particular, second
order convergence in time. Analogous relative errors at somewhat
faster computing times result from use of the tolerance value
$tol_{\mathrm{GMRES}}=10^{-6}$.  
\begin{table}[htb!]
\begin{center}
\begin{tabular}{ccccccccc}
\hline
 & & 
\multicolumn{3}{c}{$tol_{\mathrm{GMRES}}=10^{-10}$} && 
\multicolumn{3}{c}{$tol_{\mathrm{GMRES}}=10^{-6}$} \\
\cline{3-5} \cline{7-9} 
$\Delta t$  & $\Nover$ & Setup & Time step & Total
                      && Setup & Time step & Total\\
\hline
10$^{-4}$ & 4 & 33.989 & 2.343 & 268.289 && 
                25.798 & 2.107 & 236.498 \\
5$\times$10$^{-5}$ & 4 & 33.696 & 2.406 & 274.296 && 
                         23.944 & 1.883 & 212.244\\
2.5$\times$10$^{-5}$ & 4 & 34.095 & 2.371 & 271.195 && 
                           23.271 & 1.693 & 192.571\\
\hline
\end{tabular}
\caption{CPU times (in seconds) required to evolve the heating-circuit
  FC-AD solver for a total 100 time steps on a $800\times 1300$ grid
  for various values of the time step $\Delta t$.}
\label{tab:heating-circuit}
\end{center}
\end{table}

The CPU times required to obtain the various heating-circuit solutions
mentioned above are presented in Table~\ref{tab:heating-circuit} for
two different values of the GMRES residual tolerance
$tol_{\mathrm{GMRES}}$ (both of which yield similar errors, as
indicated previously in this section, for the values the parameters
under consideration presently).

\subsection{Wave propagation problem: performance, convergence and
stability\label{wave_numer_perf}}
The following two subsections demonstrate the properties of the FC-AD
solver for the wave equation with variable
coefficients. Section~\ref{sec:dispersionless} highlights the highly
significant performance gains that arise from the low spatial dispersion
inherent in the FC methodology, including comparisons with
finite-difference methods. Section~\ref{wave_numer}, in turn, presents
an application of the FC solver to a non-trivial geometry, and it
demonstrates the convergence and stability of the approach.
\subsubsection{Spatial dispersionlessness}
\label{sec:dispersionless}
We demonstrate the spatial dispersionlessness of our FC wave-equation
solvers by means of the one-dimensional problem defined by the
equation
$\alpha(x)\partial_{tt}u-\partial_{x}(\beta(x)\partial_{x}u)=0$ with
variable coefficients $\alpha(x)=1+x/2$ and $\beta(x)=2/(2+x)$ in the
spatial interval $(a,b)=(0,1)$, with final time $T=1$, and with
Dirichlet boundary conditions such that the exact solution is given by
$u(x)=\sin(2\pi f(x^2/4+x+t))$.  The FC-based time marching scheme
for the present one-dimensional case is analogous to that presented in
Section~\ref{wave:time-discrete} for two dimensions but, of course, in
here use of alternating directions is neither possible nor necessary.
In order to focus attention on the spatial dispersion properties of
the algorithm, our first example in this section uses a fixed time
discretization which gives rise to errors smaller than all the
corresponding spatial errors: $\Delta t=8\times10^{-7}$ used in conjunction
with second order Richardson extrapolation.  The spatial grids used,
in turn, are frequency dependent: they are taken to hold a prescribed
number of points per wavelength (PPW).  Since the coefficients and
thus the wavelength vary within the physical domain, the PPW quantity
is defined as the number of discretization points used within the {\em
  shortest} wavelength---which, in our case, can be defined as the
minimum value of the ``wavelength function'' 
$\min\{1/(f(x/2+1)):x\in[0,1]\}=2/(3f)$.

\begin{figure}[!ht] 
\begin{center} 
$\begin{array}{cc}
\includegraphics[width=0.475\textwidth,angle=0]
{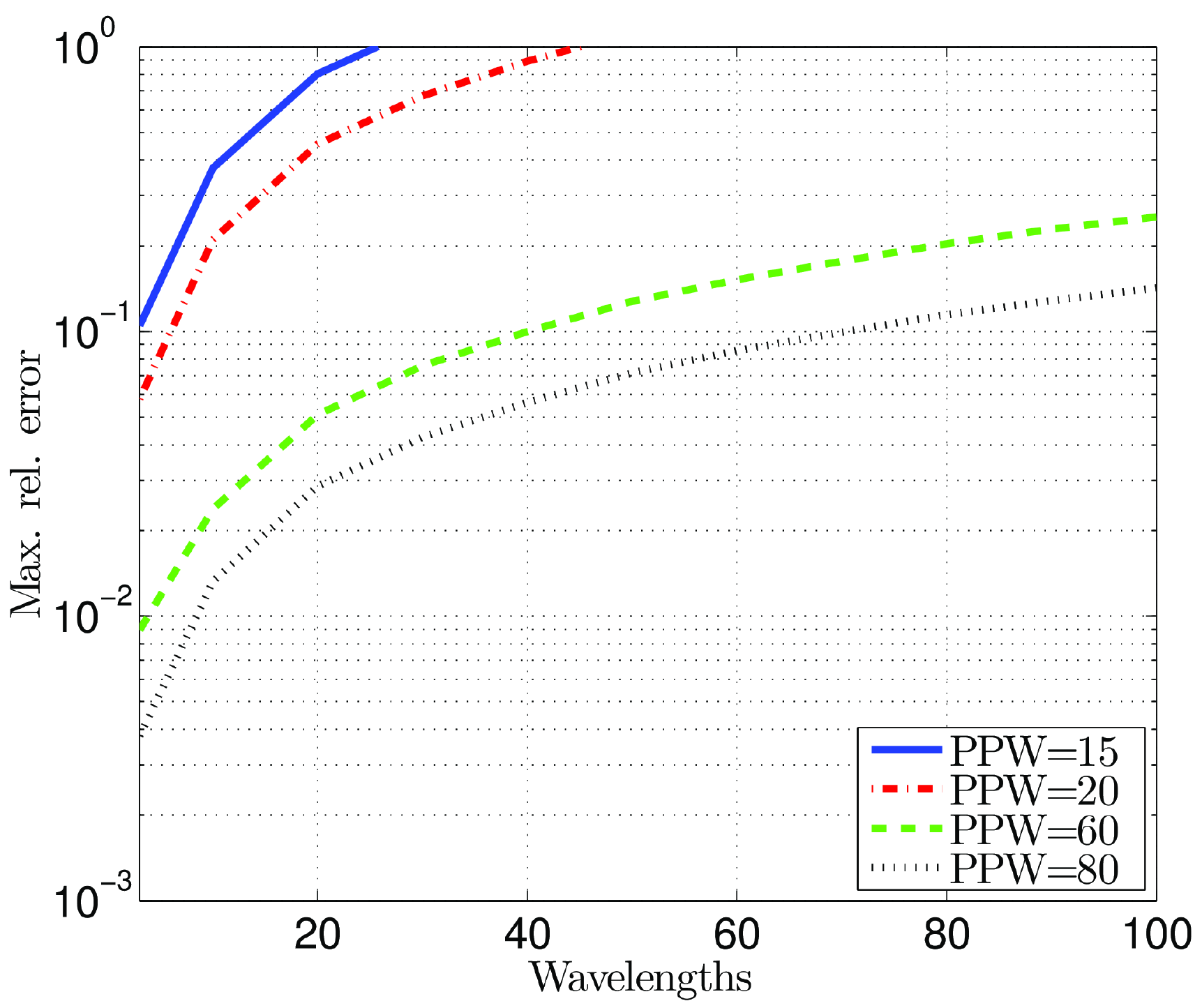} &
\includegraphics[width=0.475\textwidth,angle=0]
{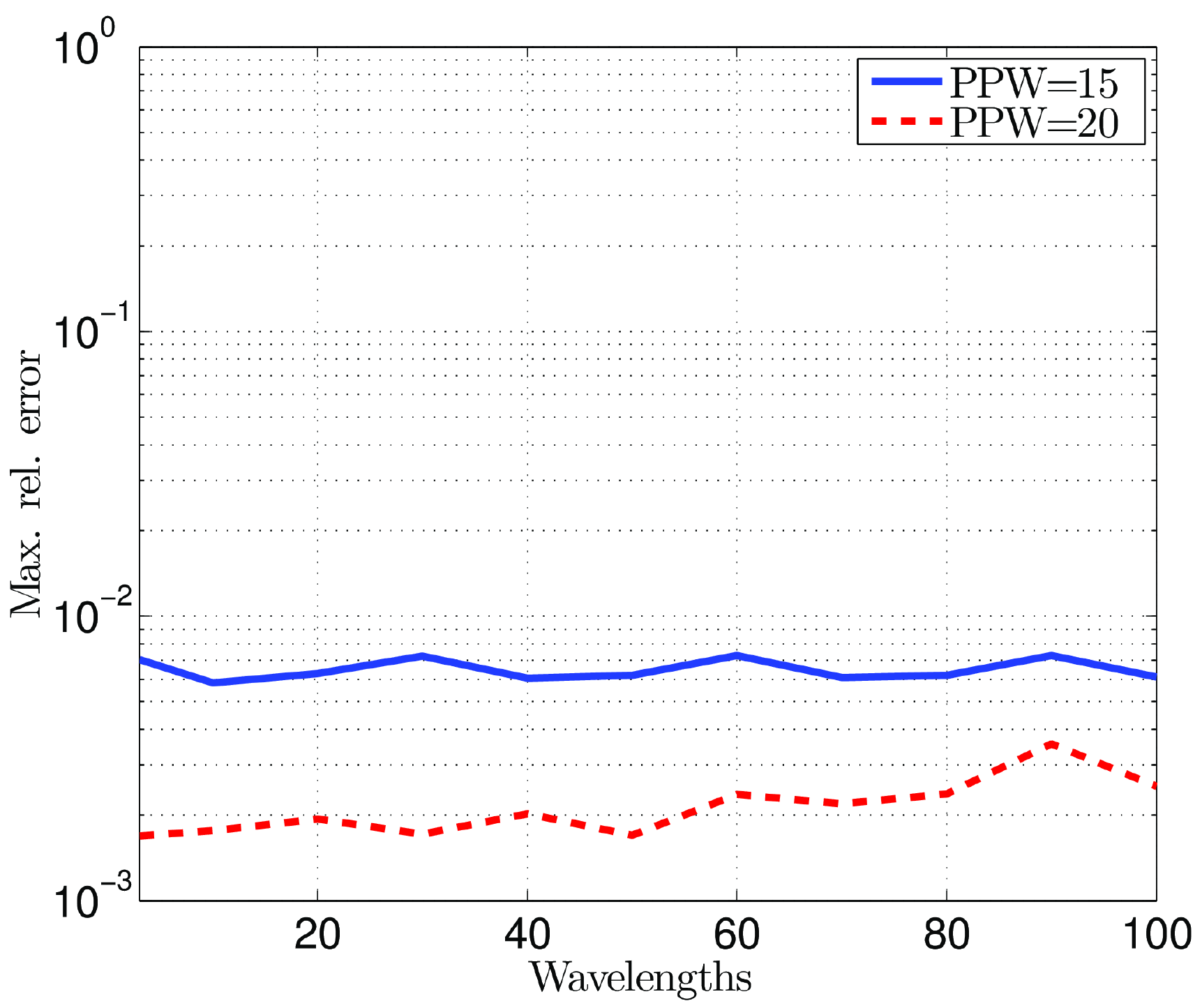}
\end{array}$ 
\caption{Maximum relative errors in numerical solutions of the
  one-dimensional wave equation in the time interval $[0,1]$ using a
  fixed number of Points-Per-Wavelength (PPW). Left: one-dimensional
  finite difference solver. Right: one-dimensional FC-AD solver.}
\label{fig:ODE-dispersion}
\end{center} 
\end{figure}

Figure~\ref{fig:ODE-dispersion} displays maximum errors relative to
the maximum solution values for numerical solutions produced by two
spatial differentiation methods, namely second-order finite
differences (left portion of Figure~\ref{fig:ODE-dispersion}) and
Fourier continuation (right portion of
Figure~\ref{fig:ODE-dispersion}). The time discretization in both
cases is the one described above: second-order Richardson
extrapolation with $\Delta t=8\times10^{-7}$. It can be seen from these
figures that, even at the lowest frequencies (lowest number of
wavelengths in the domain $(0,1)$), the FC solutions for 15 and 20 PPW
are significantly more accurate than the corresponding
finite-difference solutions, and, for such low frequencies, only the
finite-difference solutions using 60 and 80 PPW approximately match
the error in the 15 and 20 PPW FC solutions.  As the frequency
increases the numerical error in the FC solutions remains essentially
constant, that is, the FC solver is essentially spatially
dispersionless. The finite difference scheme, on the other hand, is
not: numerical errors increase significantly with frequency, and an 80
PPW mesh can only produce 1\% accurate solutions for problems
containing no more than five PPW.

\subsubsection{Wave propagation problem: complex structures \label{wave_numer}} 
This section demonstrates the properties of the overall FC-AD scheme
for problems of wave motion with variable coefficients. In our first
example we consider the wave propagation problem~\eqref{eq:wave} with
variable coefficients given by $\alpha(x,y)=1+x+y$,
$\beta(x,y)=2.0x+0.5y+1$ in the domain bounded by the curve
$(x/9)^6+(y/5)^6=(1/20)^6$. The PDE right-hand side and boundary 
conditions are selected in such a way that the function 
$u(x,y,t)=\sin(\pi(x+2y-t))$ is the exact solution of the problem.
\begin{figure}[!ht] 
\begin{center} 
$\begin{array}{cc}
\includegraphics[height=0.28\textheight,angle=0]
{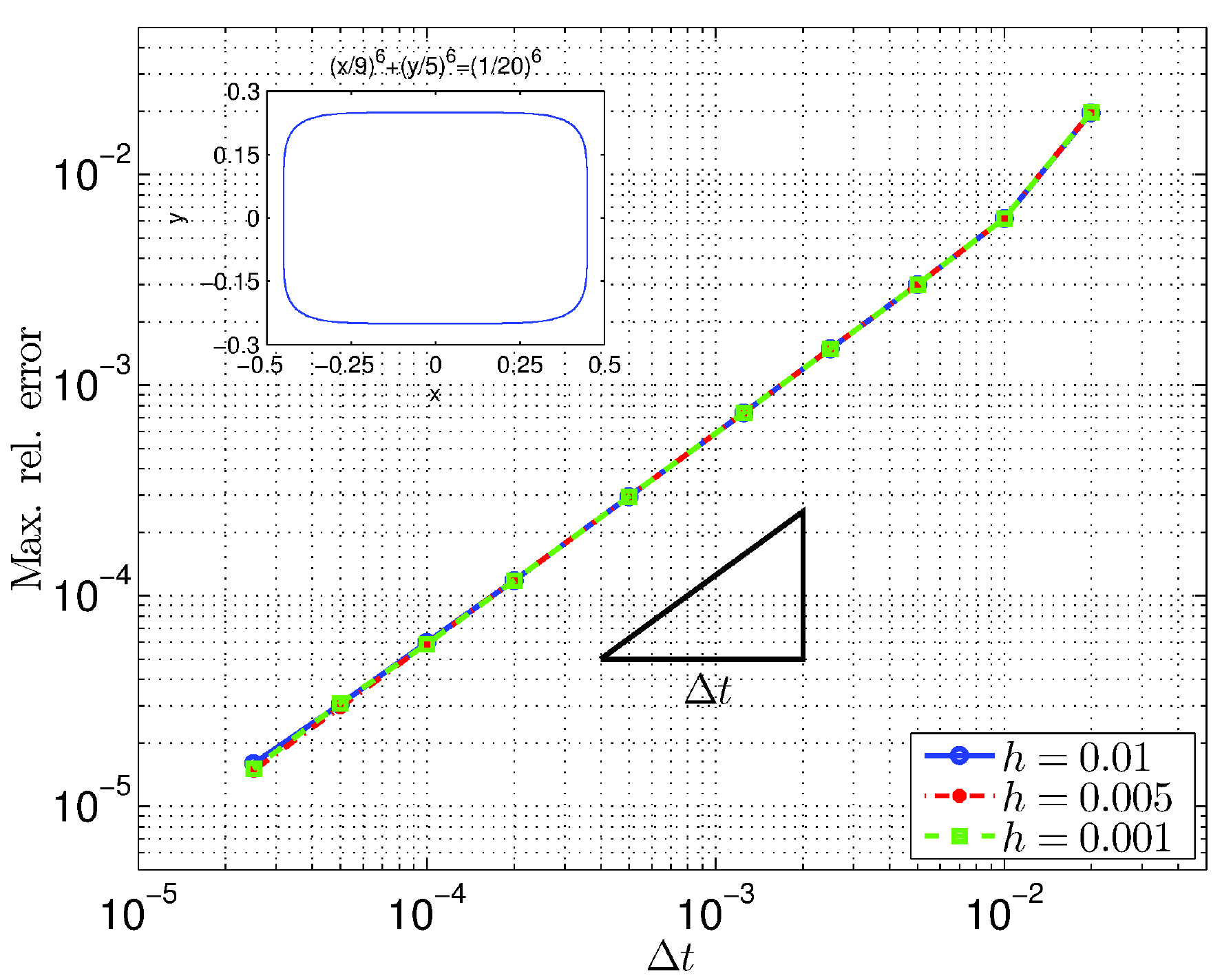} & 
\includegraphics[height=0.28\textheight,angle=0]
{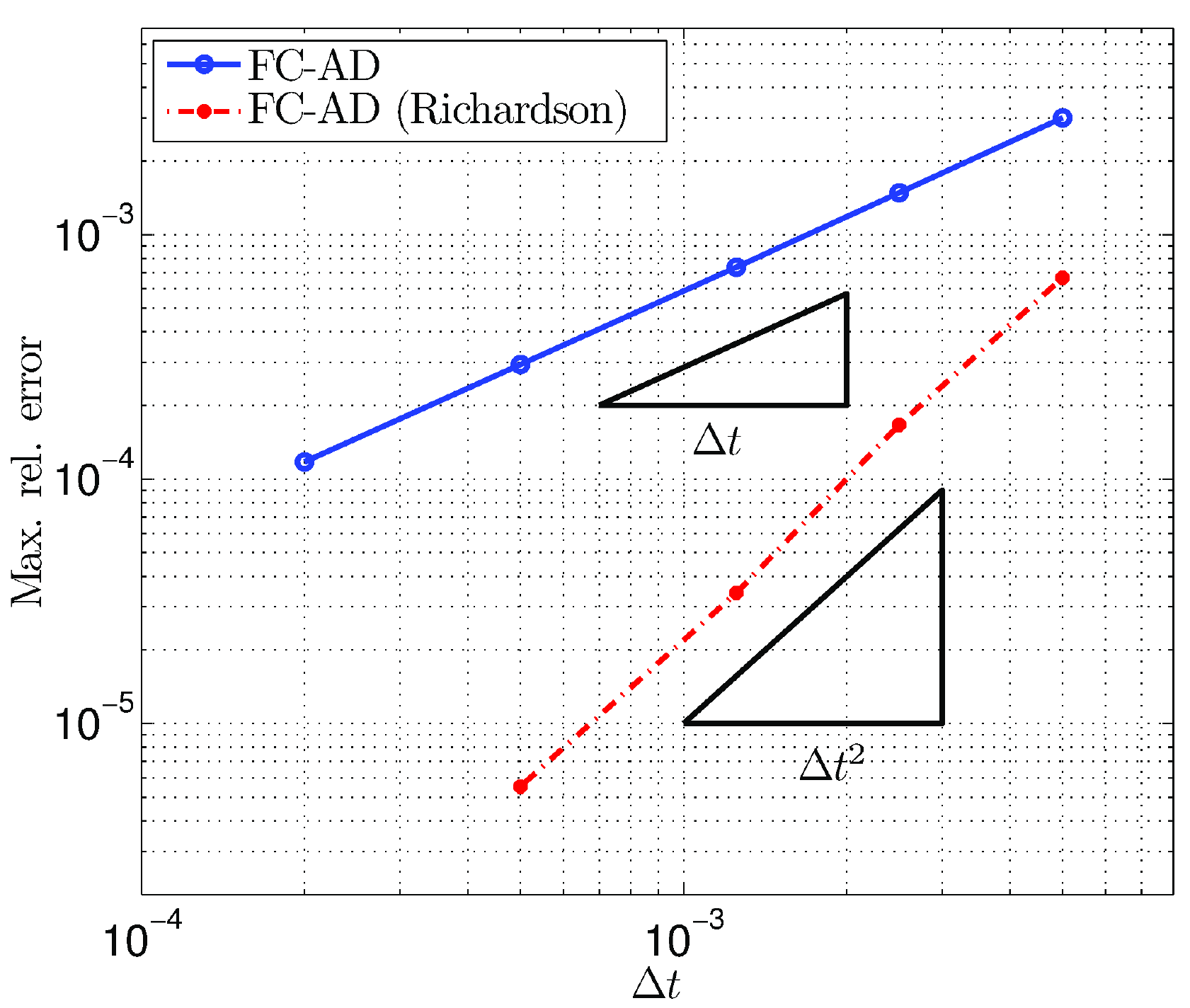} 
\end{array}$ 
\caption{Left: Maximum relative errors in the FC-AD approximate
  solution of the first wave propagation problem mentioned in
  Section~\ref{wave_numer}, as a function of the time step $\Delta t$,
  for three different spatial grids. Right: Comparison of these
  solution errors with those obtained, for the same problem, via an
  application of the Richardson extrapolation method leading to one
  order of improvement in the temporal convergence rate.}
\label{fig:wave-time}
\end{center} 
\end{figure}
Uniform meshes of grid-size $h$ are used to discretize the square
$[-0.5,0.5]\times[-0.3,0.3]$ which contains the PDE domain. For our $\Delta t$
convergence studies, errors in the FC-AD numerical solution are
evaluated at the final time $T=0.1$. The GMRES tolerance is set to
$tol_{\mathrm{GMRES}}=10^{-10}$, and the preconditioner uses the
oversampling ratio $\Nover=4$.

Figure~\ref{fig:wave-time} presents maximum values of the solution
error throughout $\Omega_{h}$ (evaluated through comparison with the
exact solution) relative to the maximum value of the solution, that
results from use of the FC-AD algorithm described in
Section~\ref{wave:time-discrete} for $h=0.01$, $0.005$, and $0.001$
(left plot), as well as corresponding results obtained by means an
additional application of the Richardson extrapolation procedure
(right plot).  Since the spatial discretization errors for different
grids are negligible with respect to the time discretization, the
relative error exhibits the expected first and second-order
convergence that results from the FC-AD time-marching scheme
(described in Section~\ref{sec:time-disc}) and the application of the
Richardson extrapolation procedure, respectively. Once again, the
unconditional stability of the FC-AD time-marching scheme allows us to
consider a wide range of time steps, without CFL-type
restrictions. Notice that, as a result of the hybrid
exterior-source/asymptotic-matching procedure for enforcement of
boundary conditions, small values of the time-step $\Delta t$ (and the
associated boundary layers) do not give rise to accuracy losses even
when the coarse mesh-size $h=0.01$ is used.

To illustrate the applicability of the FC-AD wave solver for PDEs with
general spatially variable coefficients, we consider the {\it
  waveguide} depicted in the left portion of
Figure~\ref{fig:wave-device}, which consists of a pair of curved
channels within a rectangular plate of dimensions $1\times 2$ with
chamfered corners.
\begin{figure}[!ht] 
\begin{center} 
$\begin{array}{c@{\hspace{1.5cm}}c}
\includegraphics[height=0.3\textheight,angle=0]
{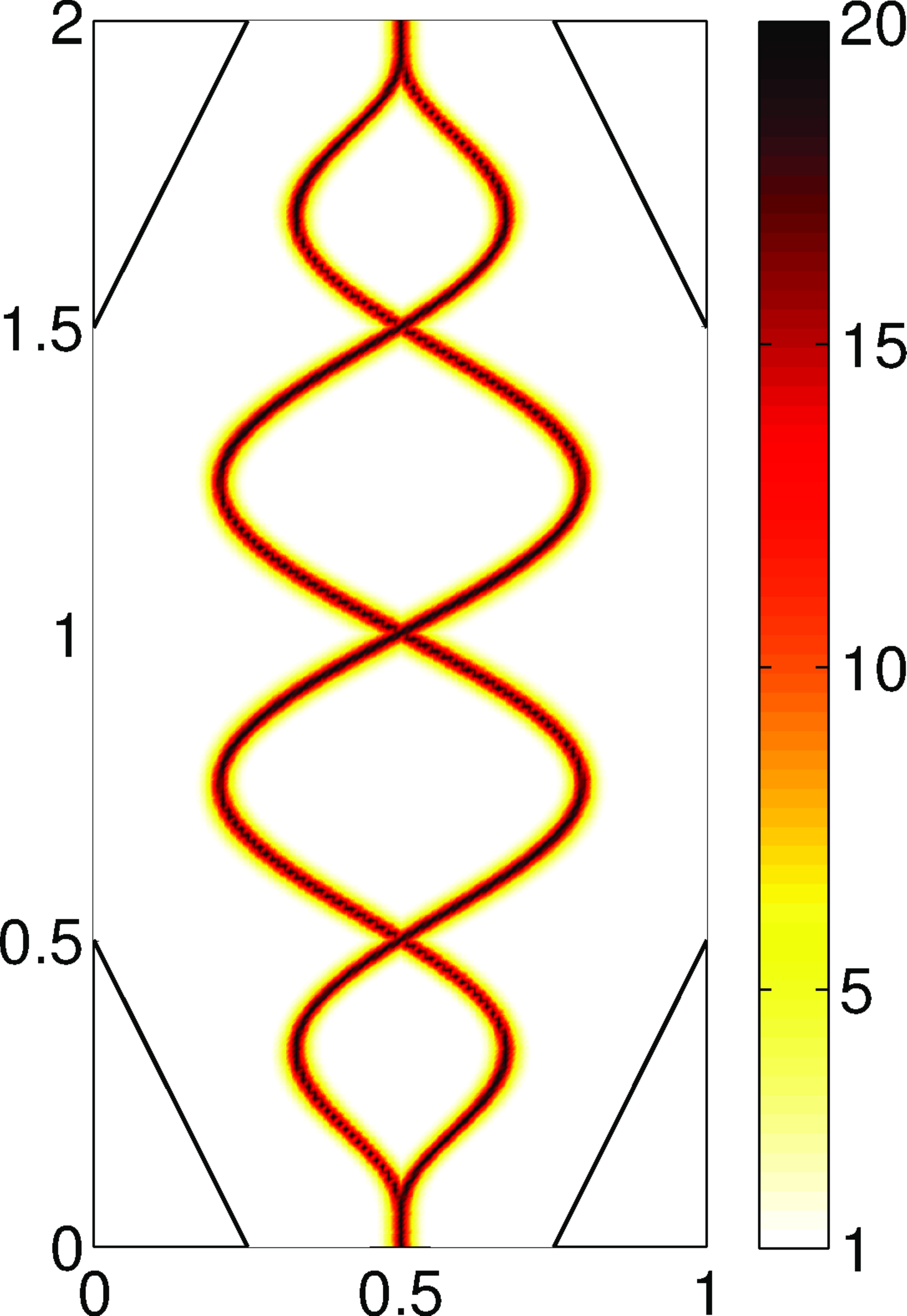} &
\includegraphics[height=0.3\textheight,angle=0]
{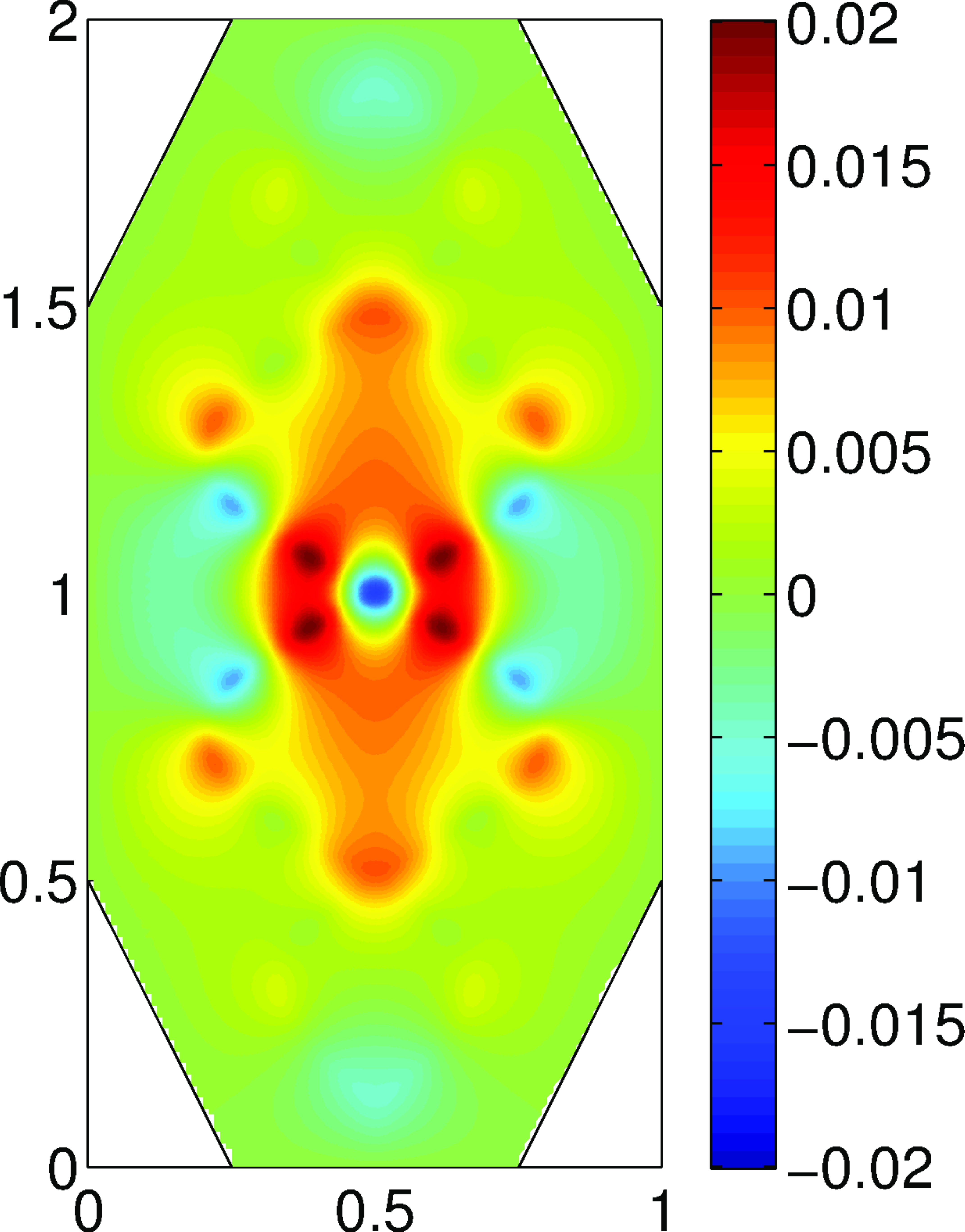} 
\end{array}$ 
\caption{Left: Values of the variable coefficient
  $\alpha=\alpha(x,y)$. Right: FC-AD approximate solution at time
  $T=4$.}
\label{fig:wave-device}
\end{center} 
\end{figure}
With reference to equation~\eqref{eq:wave}, the material is
characterized by variable coefficients $\alpha$ and $\beta$ where
$\alpha$ varies between $1$ and $20$ as depicted in the left portion
of Figure~\ref{fig:wave-device}, and where $\beta=1$.  The structure
is excited by a spatially Gaussian harmonic pulsed source of frequency
$f=3\pi$ supported in a disk of radius $0.05$ centered at point
$(0.5,1)$.  The solution at the final time $T=4$, which is displayed
on the right portion of Figure~\ref{fig:wave-device}, was obtained
using $\Delta t=10^{-2}$ on a $200\times 400$ spatial grid and
$tol_{\mathrm{GMRES}}=10^{-6}$ in a total computational time of
$194.147$ seconds (requiring $18.147$ for the setup and $0.440$ seconds 
per time-step). Using, for reference, a solution computed via a 
$400\times 800$ spatial grid and $\Delta t=2.5\times 10^{-3}$, it was 
found that the solution above contains an error of $0.002$\%. 

\section*{Conclusions}

We have introduced Fourier-based alternating direction time-marching
schemes for the numerical solution of linear PDEs {\em with variable
  coefficients}. Following~\cite{bruno10,lyon10}, our use of the
Fourier continuation method in combination with the ADI time-marching
scheme and the Fast Fourier Transform, gives rise to a highly
desirable combination of properties, namely, unconditional stability
and high-order accuracy and spatial dispersionlessness at FFT speeds in the
general context of non-periodic functions and for general domains.  A
variety of numerical results demonstrate the properties of the
resulting solvers for problems of diffusion as well as wave
propagation and scattering in media with spatially varying
characteristics.

\appendix
\section{An auxiliary lemma}
\label{Appen}
\setcounter{equation}{0}
\numberwithin{equation}{section}
\begin{lemma}\label{lem:ode}
  Let $\tilde{q}^{\ell}$, $g_{a}$ and $g_{b}$ be smooth functions
  defined in the interval $[b,c]$, and let $\tilde{q}^{\ell}$ be
  strictly positive in that interval.  If $g_{a}$ and $g_{b}$ satisfy
  the conditions~\eqref{eq:assump-lemma-ga-gb}, then the
  overdetermined ODE problem
\begin{align} 
&v-\tilde{p}\frac{dv}{dx}
-\tilde{q}^{\ell}\frac{d^{2}v}{dx^2}=g_{a}+\mu g_{b} 
\quad \mbox{ in }(b,c),\label{eq:aux-appen-1}\\
&v(b)=\displaystyle\frac{dv}{dx}(b)=0,
\qquad v(c)=\displaystyle\frac{dv}{dx}(c)=0,\label{eq:aux-appen-3}
\end{align}
is not solvable:
equations~\eqref{eq:aux-appen-1}--\eqref{eq:aux-appen-3} do not admit
solutions $v$ for any real value of the constant $\mu$.
\end{lemma}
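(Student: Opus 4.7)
I would argue by contradiction. Assume $v$ solves \eqref{eq:aux-appen-1}--\eqref{eq:aux-appen-3} for some $\mu\in\mathbb{R}$. My plan is to derive, from the four homogeneous boundary conditions, weighted integral identities incompatible with the hypotheses on $g_a$ and $g_b$.

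The starting point is to multiply the ODE by a smooth test function $\psi\colon[b,c]\to\mathbb{R}$ and integrate. Because $v(b)=v'(b)=v(c)=v'(c)=0$, integrating the transport term $\tilde p v'\psi$ once by parts and the principal term $\tilde q^{\ell}v''\psi$ twice by parts eliminates every boundary contribution, producing the adjoint-duality identity
\begin{equation*}
\int_b^c v\,L^{\ast}\psi\,dx \;=\; \int_b^c \psi(g_a+\mu g_b)\,dx,
\qquad L^{\ast}\psi := \psi + (\tilde p\,\psi)' - (\tilde q^{\ell}\psi)''.
\end{equation*}
Testing first with $\psi\equiv 1$ and invoking \eqref{eq:assump-lemma-ga-gb} gives the preliminary identity $\int_b^c[1+\tilde p'-(\tilde q^{\ell})'']v\,dx = \int_b^c g_a\,dx > 0$, which already pins down the only $\mu$-independent invariant of the right-hand side and shows $v\not\equiv 0$.

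The plan is then to close the argument by testing against elements $\psi\in\ker L^{\ast}$. Since $L^{\ast}$ is a second-order linear ODE with nonvanishing leading coefficient $-\tilde q^{\ell}$ and no boundary conditions are imposed on $\psi$, $\ker L^{\ast}$ is 2-dimensional; for every such $\psi$ the left-hand side of the identity vanishes and the Fredholm-type orthogonality $\int_b^c \psi(g_a+\mu g_b)\,dx=0$ must hold. Applied to a basis $\phi_1,\phi_2$ of $\ker L^{\ast}$ this yields the overdetermined linear system
\begin{equation*}
\int_b^c\phi_i\,g_a\,dx + \mu\int_b^c\phi_i\,g_b\,dx = 0,\qquad i=1,2,
\end{equation*}
and the goal is to show that no real $\mu$ satisfies both equations, i.e., that the $2\times 2$ matrix with rows $(\int\phi_i g_a,\int\phi_i g_b)$ has nonzero determinant.

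The main obstacle is precisely this last step, since neither $\int g_a>0$ nor $\int g_b=0$ translates directly into the required nondegeneracy of the matrix: the constant function $\psi\equiv 1$ would settle matters immediately, but $1\in\ker L^{\ast}$ only when $1+\tilde p'-(\tilde q^{\ell})''\equiv 0$, which is not assumed. A natural route is to first symmetrize $L$ via the integrating factor $\rho=\exp\!\int(\tilde p-(\tilde q^{\ell})')/\tilde q^{\ell}$, which is positive and bounded in view of $\tilde q^{\ell}>\eta>0$, and rewrites $Lv=f$ as $\rho v-(\rho\tilde q^{\ell}v')'=\rho f$; integrating this against $1$ and using $v'(b)=v'(c)=0$ gives $\int_b^c\rho v\,dx=\int_b^c\rho g_a\,dx+\mu\int_b^c\rho g_b\,dx$, effectively placing $\rho$ in the role of the constant test function of the symmetrized problem. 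The contradiction is then extracted by combining this weighted identity with the specific sign and symmetry structure of the auxiliary sources $g_a,g_b$ constructed in the Remark after Section~\ref{sec:equiv-for}---namely $g_a$ a non-negative bump with $\int g_a>0$ and $g_b$ its odd-around-the-midpoint multiple with $\int g_b=0$---together with the positivity of $\rho$, which together preclude a simultaneous cancellation for any value of $\mu$.
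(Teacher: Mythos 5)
Your adjoint--duality framework is the right lens: the four homogeneous boundary conditions kill every boundary term in the integration by parts, so each $\psi\in\ker L^{\ast}$ yields the orthogonality relation $\int_b^c \psi(g_a+\mu g_b)\,dx=0$, and the paper's proof can indeed be read as a special case of this. But, as you yourself point out, the reduction to $\ker L^{\ast}$ alone does not produce a contradiction: one must control the signs of the quantities $\int\phi_i g_a$ and $\int\phi_i g_b$ for a basis $\{\phi_1,\phi_2\}$ of $\ker L^{\ast}$, and nothing in your argument pins these down.

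The paper supplies exactly that missing ingredient via the \emph{strong maximum principle}. Writing $v(x)=\int_b^c G(x,\xi)h(\xi)\,d\xi$ with the Dirichlet Green's function $G$, the two constraints $v'(b)=v'(c)=0$ become $\int_b^c \partial_xG(b,\xi)h(\xi)\,d\xi=\int_b^c \partial_xG(c,\xi)h(\xi)\,d\xi=0$, which identifies $\partial_xG(b,\cdot)$ and $\partial_xG(c,\cdot)$ as the relevant pair of elements of your $\ker L^{\ast}$. The decisive observation is then that the particular combination $H=\partial_xG(b,\cdot)-\partial_xG(c,\cdot)$ solves the homogeneous ODE with strictly positive boundary data $1/\tilde q^{\ell}(b)$ and $1/\tilde q^{\ell}(c)$; the strong maximum principle gives $H\ge C>0$ on $[b,c]$, and this sign, combined with $\int_b^c g_a>0$ and $\int_b^c g_b=0$, forces the contradiction $0=\int H\,h>0$. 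Your closing paragraph does not substitute for this step: after symmetrizing, the integrating factor $\rho$ is \emph{not} in $\ker L^{\ast}$, so testing the divergence form against the constant yields only the identity $\int\rho v=\int\rho(g_a+\mu g_b)$, which still contains the unknown solution $v$ on the left and therefore places no constraint whatsoever on $\mu$; and the special odd-bump structure of the concrete $g_a,g_b$ from Section~\ref{sec:equiv-for} cannot be invoked, since the lemma is asserted for \emph{all} smooth $g_a,g_b$ satisfying just the two integral conditions~\eqref{eq:assump-lemma-ga-gb}.
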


\begin{proof}
  Assume a solution $v$ of the problem
  \eqref{eq:aux-appen-1}--\eqref{eq:aux-appen-3} exists. Denoting by
  $G(x,\xi)$ the Green function of the problem,
\begin{align*} 
&G(x,\xi)-\tilde{p}\frac{\partial}{\partial x} G(x,\xi)
-\tilde{q}^{\ell}\frac{\partial^{2}}{\partial x^2}G(x,\xi)=\delta_{(x=\xi)}
\quad \mbox{ in }(b,c),\\
&G(b,\xi)=G(c,\xi)=0,
\end{align*}
and letting
\begin{equation}\label{h_label}
  h=g_{a}+\mu g_{b},
\end{equation}
the solution $v$ can be expressed in the form
$$
v(x)=\int_{b}^{c}G(x,\xi)h(\xi)\,\mathrm{d}\xi.
$$
Taking into account the Neumann boundary
conditions~\eqref{eq:aux-appen-3} we then obtain
\begin{align}
&\frac{dv}{dx}(b)=
\int_{b}^{c}\frac{\partial G}{\partial x}(b,\xi)h(\xi)\,\mathrm{d}\xi=0,
\label{eq:neu-1}\\
&\frac{dv}{dx}(c)=
\int_{b}^{c}\frac{\partial G}{\partial x}(c,\xi)h(\xi)\,\mathrm{d}\xi=0.
\label{eq:neu-2}
\end{align}

Now, as is known (see e.g.  in \cite[Ch. V.28]{Weinberger95}), the
function $\partial G/\partial \xi$ satisfies the ODE problems
\begin{align*} 
  &\frac{\partial G}{\partial\xi}(x,b)-
  \tilde{p}\frac{\partial}{\partial x}\left(\frac{\partial
      G}{\partial\xi}(x,b)\right) -\tilde{q}^{\ell}\frac{\partial^{2}}{\partial x^2}
  \left(\frac{\partial G}{\partial\xi}(x,b)\right)=0
  \quad \mbox{ in }(b,c),\\
  &\frac{\partial
    G}{\partial\xi}(b,b)=\frac{1}{\tilde{q}^{\ell}(b)},\qquad
  \frac{\partial G}{\partial\xi}(c,b)=0
\end{align*}
and
\begin{align*} 
&\frac{\partial G}{\partial\xi}(x,c)-
\tilde{p}\frac{\partial}{\partial x}\left(\frac{\partial G}{\partial\xi}(x,c)\right)
-\tilde{q}^{\ell}\frac{\partial^{2}}{\partial x^2}
\left(\frac{\partial G}{\partial\xi}(x,c)\right)=0
\quad \mbox{ in }(b,c),\\
&\frac{\partial G}{\partial\xi}(b,c)=0,\qquad
\frac{\partial G}{\partial\xi}(c,c)=-\frac{1}{\tilde{q}^{\ell}(c)}.
\end{align*}
In view of the identity $\frac{\partial G}{\partial
  x}(x,\xi)=\frac{\partial G}{\partial\xi}(\xi,x)$ (which follows from
the symmetry $G(x,\xi)=G(\xi,x)$ of the Green function) it follows
that the function
\[
H (x) = \frac{\partial G}{\partial x}(b,x)- \frac{\partial G}{\partial
  x}(c,x)
\]
satisfies the two-point boundary-value problem
\begin{align*} 
  &H- \tilde{p}\frac{dH}{dx}
    -\tilde{q}^{\ell}\frac{d^{2}H}{d x^2}=0
    \quad \mbox{ in }(b,c),\\
    &H(b)=\frac{1}{\tilde{q}^{\ell}(b)},\qquad
      H(c)=\frac{1}{\tilde{q}^{\ell}(c)}.
\end{align*}
Applying the strong maximum principle~\cite{evans98} to this elliptic
equation we obtain the estimate
\[
H (x) = \frac{\partial G}{\partial x}(b,x)-\frac{\partial G}{\partial x}(c,x)
\ge C > 0\qquad\mbox{for }x\in[b,c],
\]
where $C$ is the strictly positive constant
$C=\min\{1/\tilde{q}^{\ell}(b),1/\tilde{q}^{\ell}(c)\}$.
From~\eqref{eq:assump-lemma-ga-gb},~\eqref{h_label},~\eqref{eq:neu-1}
and~\eqref{eq:neu-2} we thus obtain
\begin{equation*}
0=
\int_{b}^{c}\left(\frac{\partial G}{\partial x}(b,x)-
\frac{\partial G}{\partial x}(c,x)\right)h(x)\,\mathrm{d}x
\ge C\int_{b}^{c} (g_{a}(x)+\mu g_{b}(x))\,\mathrm{d}x
 =  C\int_{b}^{c}g_{a}(x)\,\mathrm{d}x>0,
\end{equation*}
which is a contradiction, and the lemma follows.
\end{proof}

\bibliography{biblio} 

\begin{thebibliography}{10}

\bibitem{bender_orzag}
C.~M. Bender and S.~A. Orszag.
\newblock {\em Advanced mathematical methods for scientists and engineers:
  Asymptotic methods and perturbation theory}, volume~1.
\newblock Springer Verlag, 1978.

\bibitem{boyd01}
J.~P. Boyd.
\newblock {\em {Chebyshev and Fourier spectral methods}}.
\newblock Dover Publications, 2001.

\bibitem{boyd02}
J.~P. Boyd.
\newblock {A comparison of numerical algorithms for Fourier extension of the
  first, second, and third kinds}.
\newblock {\em J. Comput. Phys.}, 178(1):118--160, 2002.

\bibitem{albin11}
O.~P. Bruno and N.~Albin.
\newblock {A spectral FC solver for the compressible Navier-Stokes equations in
  general domains I: Explicit time-stepping}.
\newblock {\em J. Comput. Phys.}, 230(16):6248--6270, 2011.

\bibitem{bruno07}
O.~P. Bruno, Y.~Han, and M.~M. Pohlman.
\newblock {Accurate, high-order representation of complex three-dimensional
  surfaces via Fourier continuation analysis}.
\newblock {\em J. Comput. Phys.}, 227(2):1094--1125, 2007.

\bibitem{bruno10}
O.~P. Bruno and M.~Lyon.
\newblock {High-order unconditionally stable FC-AD solvers for general smooth
  domains I. Basic elements}.
\newblock {\em J. Comput. Phys.}, 229(6):2009--2033, 2010.

\bibitem{bruno03}
O.P. Bruno.
\newblock {Fast, high-order, high-frequency integral methods for computational
  acoustics and electromagnetics}.
\newblock In M.~Ainsworth, P.~Davies, D.~Duncan, P.~Martin, and B.~Rynne,
  editors, {\em Topics in Computational Wave Propagation Direct and Inverse
  Problems Series}, volume~31 of {\em Lecture Notes in Computational Science
  and Engineering}, pages 43--82, 2003.

\bibitem{douglas56}
J.~Douglas, Jr. and H.~H. Rachford, Jr.
\newblock {On the numerical solution of heat conduction problems in two and
  three space variables}.
\newblock {\em T. Am. Math. Soc.}, 82(2):421--439, 1956.

\bibitem{evans98}
L.~C. Evans.
\newblock {\em {Partial Differential Equations}}, volume~12 of {\em Graduate
  Studies in Mathematics}.
\newblock American Mathematical Society, 1998.

\bibitem{frigo05}
M.~Frigo and S.~G. Johnson.
\newblock {The design and implementation of FFTW 3}.
\newblock {\em Proc. IEEE Micr. Elect.}, 93(2):216--231, 2005.

\bibitem{hesthaven07}
J.~Hesthaven, S.~Gottlieb, and D.~Gottlieb.
\newblock {\em {Spectral methods for time-dependent problems}}.
\newblock Cambridge University Press, 2007.

\bibitem{lyon10}
M.~Lyon and O.~P. Bruno.
\newblock {High-order unconditionally stable FC-AD solvers for general smooth
  domains II. Elliptic, parabolic and hyperbolic PDEs; theoretical
  considerations}.
\newblock {\em J. Comput. Phys.}, 229(9):3358--3381, 2010.

\bibitem{marchuk90}
G.~I. Marchuk.
\newblock {Finite Difference Methods: Splitting and Alternating Direction
  Methods}.
\newblock In P.~G. Ciarlet and J.~L. Lions, editors, {\em Handbook of numerical
  analysis}, volume~1, pages 197--462. North-Holland, 1990.

\bibitem{peaceman55}
D.~W. Peaceman and H.~H. Rachford~Jr.
\newblock {The numerical solution of parabolic and elliptic differential
  equations}.
\newblock {\em J. Soc. Ind. Appl. Math.}, 3(1):28--41, 1955.

\bibitem{Sun96}
W.~Sun, W.~Huang, and R.~D. Russell.
\newblock {Finite difference preconditioning for solving orthogonal collocation
  equations for boundary value problems}.
\newblock {\em SIAM J. Numer. Anal.}, 33(6):2268--2285, 1996.

\bibitem{walker88}
H.~F. Walker.
\newblock {Implementation of the GMRES method using Householder
  transformations}.
\newblock {\em SIAM J. Sci. Stat. Comp.}, 9:152--163, 1988.

\bibitem{Weinberger95}
H.~F. Weinberger.
\newblock {\em A first course in partial differential equations with complex
  variables and transform methods}.
\newblock Dover, 1995.

\end{thebibliography}
\end{document}